\newenvironment{customprop}[1]
  {\innercustomprop}
  {\endinnercustomprop}
\newenvironment{customlemma}[1]
  {\innercustomlemma}
  {\endinnercustomlemma}
\newcommand{\Norm}[1]{ \left\|  #1 \right\| }
\newcommand{\floor}[1]{\lfloor #1 \rfloor }
\newcommand{\Be}{\begin{equation}}
\newcommand{\Ee}{\end{equation}}
\newcommand{\Bm}{\begin{multline}}
\newcommand{\Em}{\end{multline}}
\newcommand{\Bea}{\begin{eqnarray}}
\newcommand{\Eea}{\end{eqnarray}}
\newcommand{\Beas}{\begin{eqnarray*}}
\newcommand{\Eeas}{\end{eqnarray*}}
\newcommand{\Benu}{\begin{enumerate}}
\newcommand{\Eenu}{\end{enumerate}}
\newcommand{\Bi}{\begin{itemize}}
\newcommand{\Ei}{\end{itemize}}
\def\intslash{\fint}
\def\intslash{\rlap{\kern  .32em $\mspace {.5mu}\backslash$ }\int}
\def\qsl{{\rlap{\kern  .32em $\mspace {.5mu}\backslash$ }\int_{Q_x}}}
\def\Norm#1{{ \left\|  #1 \right\| }}
\def\floor#1{{\lfloor #1 \rfloor }}
\def\emph#1{{\it #1 }}
\font \roman = cmr10 at 10 true pt
\def\x{{\hbox{\roman x}}}
\def\be#1{\begin{equation}\label{ #1}}
\def\endeq{\end{equation}}
\def\endal{\end{align}}
\def\bas{\begin{align*}}
\def\eas{\end{align*}}
\def\bi{\begin{itemize}}
\def\ei{\end{itemize}}
\def\emph#1{{\it #1}}
\def\textbf#1{{\bf #1}}
\theoremstyle{plain}
   \newtheorem{theorem}{Theorem}[section]
   \newtheorem{proposition}[theorem]{Proposition}
   \newtheorem{lemma}[theorem]{Lemma}
   \newtheorem{corollary}[theorem]{Corollary}
   \newtheorem{theorem*}{Theorem}
\theoremstyle{remark}
   \newtheorem{remark}[theorem]{Remark}
\theoremstyle{definition}
\numberwithin{equation}{section}
\begin{document}
\title[Multipliers associated to convex domains in $\mathbb{R}^2$]{Multiplier transformations associated to convex domains in $\mathbb{R}^2$}

\author[L. Cladek]{Laura Cladek}

\address{L. Cladek, Department of Mathematics\\ University of Wisconsin-Madison\\480 Lincoln Drive, Madison, WI 53706, USA}

\email{cladek@math.wisc.edu}

\subjclass{42B15}

\begin{abstract}
We consider Fourier multipliers in $\mathbb{R}^2$ of the form $m\circ\rho$ where $\rho$ is the Minkowski functional associated to a convex set in $\mathbb{R}^2$, and prove $L^p$ bounds for the corresponding multiplier operators. It is of interest to consider domains whose boundary is not smooth. Our results depend on a notion of Minkowski dimension introduced in \cite{sz} that measures ``flatness" of the boundary of the domain. Our methods analyze the case of oscillatory multipliers $\frac{e^{i\rho(\xi)}}{(1+|\xi|)^{-a}}$ associated to wave equations, which we use to derive results for more general multiplier transformations.
\end{abstract}

\thanks{The author would like to thank Andreas Seeger for introducing this problem, and for his guidance and many helpful discussions. Research supported in part by NSF Research and Training grant DMS 1147523}
\maketitle


\section{Introduction}
Let $\Omega\subset\mathbb{R}^2$ be a bounded, open convex set such that $0\in\Omega$, and let $\rho$ be its Minkowski functional, given by 
\begin{align*}
\rho(\xi)=\inf\{t>0|\,t^{-1}\xi\in\Omega\}.
\end{align*}
Since $\Omega$ is convex, $\rho:\mathbb{R}^2\to\mathbb{R}^+\cup\{0\}$ is the unique function that is homogeneous of degree one and identically $1$ on $\partial\Omega$. We are interested in multipliers of the form $m\circ\rho$, where $m:\mathbb{R}\to\mathbb{C}$ is a bounded, measurable function. We refer to this class of multipliers as \textit{quasiradial multipliers}. The class of quasiradial multipliers generalizes radial multipliers on $\mathbb{R}^2$, which would correspond to the special case that $\Omega$ is the unit disc and $\rho(\xi)=|\xi|$.
\newline
\indent
As a model case for quasiradial multipliers, one can study the generalized Bochner-Riesz multipliers $(1-\rho(\xi))_+^{\lambda}$ for $\lambda>0$. We define the generalized Bochner-Riesz operators $T_{\lambda}$ for $\lambda>0$ by
\begin{align*}
\mathcal{F}[T_{\lambda}f](\xi)=(1-\rho(\xi))_+^{\lambda}\widehat f(\xi).
\end{align*}
When $\partial\Omega$ is smooth, the problem of $L^p(\mathbb{R}^2)$ boundedness of the generalized Bochner-Riesz operators is well understood. The problem was first completely solved in the special case that $\Omega$ is the unit circle by Fefferman in \cite{feff} and C\'{o}rdoba in \cite{cor}, where it was proven that $T_{\lambda}$ is bounded on $L^p(\mathbb{R}^2)$ if and only if $\lambda>\lambda_0(p):=|\frac{2}{p}-1|-\frac{1}{2}$. This result was then generalized to domains with smooth boundary by Sj\"{o}lin in \cite{sjolin} and H\"{o}rmander in \cite{hor}. 
\newline
\indent
However, for certain convex domains with rough boundary, the critical index $\lambda_0(p)$ can be improved. In \cite{pod2}, Podkorytov considered Bochner-Riesz means associated to polyhedra in $\mathbb{R}^d$ and showed that if $\rho$ is the Minkowski functional of a polyhedron, then $\mathcal{F}^{-1}[(1-\rho(\cdot))_+^{\lambda}]\in L^1$ for $\lambda>0$. In \cite{sz}, Seeger and Ziesler considered Bochner-Riesz means associated to general convex domains in $\mathbb{R}^2$. They obtained a result involving a parameter similar to the Minkowski dimension of $\partial\Omega$, defined by a family of ``balls", or caps, and we state the definition below.
\newline
\indent
For any $p\in\partial\Omega$, we say that a line $\ell,$ is a \textit{supporting line for $\Omega$ at $p$} if $\ell$ contains $p$ and $\Omega$ is contained in the half plane containing the origin with boundary $\ell$. Let $\mathcal{T}(\Omega, p)$ denote the set of supporting lines for $\Omega$ at $p$. Note that if $\partial\Omega$ is $C^1$, then $\mathcal{T}(\Omega, p)$ has exactly one element, the tangent line to $\partial\Omega$ at $p$. For any $p\in\partial\Omega$, $\ell\in \mathcal{T}(\Omega, p)$, and $\delta>0$, define
\begin{align}
B(p, \ell, \delta)=\{x\in\partial\Omega: \text{dist}(x, \ell)<\delta\}.
\end{align}
Let
\begin{align}
\mathcal{B}_{\delta}=\{B(p, \ell, \delta):\,p\in\partial\Omega, \ell\in\mathcal{T}(\Omega, p)\},
\end{align}
and let $N(\Omega, \delta)$ be the minimum number of balls $B\in\mathcal{B}_{\delta}$ needed to cover $\partial\Omega$. Let
\begin{align}\label{kappaomegadef}
\kappa_{\Omega}=\limsup_{\delta\to 0}\frac{\log N(\Omega, \delta)}{\log\delta^{-1}}.
\end{align}
\indent
The parameter $\kappa_{\Omega}$ defined in (\ref{kappaomegadef}) is similar to the upper Minkowski dimension of $\partial\Omega$. It is easy to show that for any convex domain $\Omega$, $0\le\kappa_{\Omega}\le 1/2$ (see \cite{sz} for details). We now mention a few examples of convex domains with particular values of $\kappa_{\Omega}$. Clearly, if $\Omega$ is a polygon, then $\kappa_{\Omega}=0$. For domains with smooth boundary, $\kappa_{\Omega}=1/2$. This can be seen by noting that there is a point where $\partial\Omega$ has nonvanishing curvature, and near this point the contribution to $N(\Omega, \delta)$ is $\approx\delta^{-1/2}$. One may obtain domains with intermediate values of $\kappa_{\Omega}$ by considering Lebesgue functions associated to Cantor sets with appropriate ratios of dissection. For example, let $g: [0, 1]\to [0, 1]$ be the Lebesgue function associated to the standard middle-thirds Cantor set, commonly referred to as the Cantor function. Define $\gamma: [0, 1]\to [-1, -1/2]$ by
\begin{align*}
\gamma(t)=\int_0^t g(s)\,ds-1.
\end{align*}
Let $\Omega$ be the convex domain bounded by the graph of $\gamma$ and the line segments connecting consecutive vertices in the set 
\begin{align*}
\{(1, -1/2); (1, 1); (-1, 1); (-1, -1); (0, -1)\}.
\end{align*}
Then $\kappa_{\Omega}=\frac{\log_3(2)}{(\log_3(2)+1)}$. One may similarly obtain a convex domain $\Omega$ with $\kappa_{\Omega}=\kappa$ for any $\kappa\in (0, 1/2)$ by a similar construction using a Lebesgue function corresponding to a Cantor set of an appropriate ratio of dissection.
\begin{figure}
\begin{tikzpicture}[scale=0.7]
\draw[name=firstline, very thick, blue] (0, 0)--(-2, 4)--(5, 5);
\draw[name=parab, very thick, blue, scale=1, domain=0:5, smooth, variable=\x] plot ({\x}, {0.2*\x*\x});
\draw (0, 0)--(1, 0)--(1, 0.2)--(0, 0.2)--cycle;
\draw[shift={(1, .2)},rotate=20] (0, 0)--(1, 0)--(1, 0.2)--(0, 0.2)--cycle;
\draw[shift={(1+0.87, .2*(3.4969))}, rotate=30] (0, 0)--(1, 0)--(1, 0.2)--(0, 0.2)--cycle; 
\draw[shift={(2.64, .2*(2.63*2.63))}, rotate=40] (0, 0)--(1, 0)--(1, 0.2)--(0, 0.2)--cycle; 
\draw[shift={(3.29, .2*(3.29*3.29))}, rotate=48] (0, 0)--(1.3, 0)--(1.3, 0.2)--(0, 0.2)--cycle; 
\draw[shift={(4.03, .2*(4.03*4.03))}, rotate=56] (0, 0)--(2, 0)--(2, 0.2)--(0, 0.2)--cycle; 
\draw[shift={(-2.05, 3.9))}, rotate=8.13] (0, 0)--(7.2, 0)--(7.2, 0.2)--(0, 0.2)--cycle; 
\draw[shift={(-2.05, 3.9))}, rotate=-63.43] (0, 0)--(4.5, 0)--(4.5, 0.2)--(0, 0.2)--cycle; 
\node at (1, 2.6) {\LARGE $\Omega$};

\end{tikzpicture}
\caption{As an example, here $\Omega$ is a region bounded by two lines and a portion of a parabola. If we assume all rectangles have shorter sidelength equal to $\delta$, then $N(\Omega, \delta)\le 8$. Since a portion of $\partial\Omega$ is smooth with nonvanishing curvature, we have $\kappa_{\Omega}=1/2$.}\label{fig1}
\end{figure}
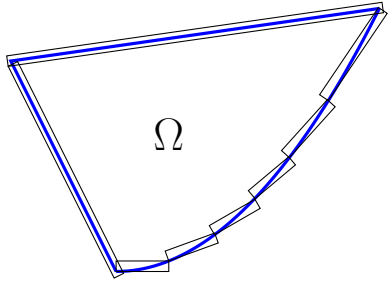
\indent
It was shown in \cite{sz} that $T_{\lambda}$ is bounded on $L^p(\mathbb{R}^2)$ if $\lambda>\kappa_{\Omega}(|\frac{4}{p}-2|-1)$. In this paper we would like to consider more general multiplier transformations. The following subordination formula from \cite{trebels}
\begin{align}\label{subord}
m(\rho(\xi))=\frac{(-1)^{\floor{\lambda}+1}}{\Gamma(\lambda+1)}\int_0^{\infty}s^{\lambda}m^{(\lambda+1)}(s)(1-\frac{\rho(\xi)}{s})_+^{\lambda}\,ds
\end{align}
combined with the result from \cite{sz} mentioned previously immediately gives that $m\circ\rho\in M^p(\mathbb{R}^2)$ if for some $\lambda>\kappa_{\Omega}(|\frac{4}{p}-2|-1)$,
\begin{align*}
\int_0^{\infty}s^{\lambda}|m^{(\lambda+1)}(s)|\,ds<\infty.
\end{align*}
However, this is not satisfactory as can be seen by analyzing the ``localized wave multiplier" $e^{i\rho(\xi)}$. Sharp $L^p$ estimates for this multiplier in the smooth case can be found in \cite{beals}, \cite{miyachi}, \cite{peral} and \cite{sss}. For general convex domains in $\mathbb{R}^2$, we prove the theorem below. First we make a few brief remarks regarding normalization of the domain $\Omega$. Let $\Omega$ be a bounded, open convex set containing the origin, as above. Then $\Omega$ contains some ball centered at the origin and is also contained in some larger ball centered at the origin. Since all results in this paper regarding $L^p$ boundedness of multipliers will be dilation invariant, we will assume without loss of generality that $\Omega$ contains the ball of radius $8$ centered at the origin. Let $M>0$ be an integer such that
\begin{align}\label{ballbound}
\{\xi:\,|\xi|\le 8\}\subset\Omega\subset\overline{\Omega}\subset\{\xi:\,|\xi|<2^M\}.
\end{align}
We will prove
\begin{theorem}\label{symbol1}
Let $\Omega$ be a convex domain satisfying (\ref{ballbound}) and $\rho$ its Minkowski functional. Let $a:\mathbb{R}\to\mathbb{C}$ be a smooth function supported outside \newline$[-2^{-2M}, 2^{-2M}]$ such that $a$ is a symbol of order $-\kappa_{\Omega}-\epsilon$ for some $\epsilon>0$, that is, for every integer $\beta\ge 0$,
\begin{align*}
|D^{\beta}a(\xi)|\lesssim_{\beta}(1+|\xi|)^{-\kappa_{\Omega}-\epsilon-\beta}.
\end{align*}
Then
\begin{align*}
\mathcal{F}^{-1}[a(\rho(\cdot))e^{i\rho(\cdot)}]\in L^1(\mathbb{R}^2),
\end{align*}
where $\Norm{\mathcal{F}^{-1}[a(\rho(\cdot))e^{i\rho(\cdot)}]}_{L^1(\mathbb{R}^2)}$ depends only on $M$, $\epsilon$, and the quantitative estimates for $a$ as a symbol of order $-\kappa_{\Omega}-\epsilon$.
\end{theorem}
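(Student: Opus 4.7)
The plan is to combine a dyadic Littlewood--Paley decomposition in the radial variable $\rho$ with an angular decomposition of $\partial\Omega$ by caps at the dual scale, and then to extract a linear phase on each piece so that what remains is a smooth bump on an approximately rectangular frequency slab. First, fix $\phi\in C_c^\infty([1/2,2])$ with $\sum_k\phi(2^{-k}t)=1$ for $t>0$, and set $m_k(\xi)=a(\rho(\xi))e^{i\rho(\xi)}\phi(2^{-k}\rho(\xi))$; the support hypothesis on $a$ forces $m_k\equiv 0$ for $k<-2M$, and on $\mathrm{supp}\,m_k$ one has $|a(\rho(\xi))|\lesssim 2^{-k(\kappa_\Omega+\epsilon)}$. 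Next, for each $k$ choose a minimal covering $\{B(p_j,\ell_j,2^{-k})\}_{j=1}^{N_k}$ of $\partial\Omega$, so that by (\ref{kappaomegadef}) we have $N_k\lesssim_{\epsilon''} 2^{k(\kappa_\Omega+\epsilon'')}$ for every $\epsilon''>0$. Let $\{\chi_j\}$ be a smooth partition of unity on $\partial\Omega$ subordinate to this cover, extend it to $\mathbb{R}^2\setminus\{0\}$ by $\chi_j(\xi):=\chi_j(\xi/\rho(\xi))$, and write $m_k=\sum_j m_{k,j}$ with $m_{k,j}(\xi)=m_k(\xi)\chi_j(\xi/\rho(\xi))$. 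Each $m_{k,j}$ is supported in an approximately rectangular slab $R_{k,j}$ of dimensions $\sim 2^k\times 2^k\,\mathrm{diam}(B_j)$, with short side along the outer normal direction to $\ell_j$.

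Let $n_j$ denote the unit outer normal to $\ell_j$ and $v_j=p_j\cdot n_j\geq 8$ (by (\ref{ballbound})), and define the linear, degree-one homogeneous function $\rho_j(\xi)=\xi\cdot n_j/v_j$, which coincides with $\rho$ on $\ell_j$. The cap inclusion $\xi/\rho(\xi)\in B(p_j,\ell_j,2^{-k})$ immediately gives $|\rho(\xi)-\rho_j(\xi)|\lesssim 1$ on $R_{k,j}$, and a geometric analysis (using convexity to control the rotation of the outer normal to $\partial\Omega$ across $B_j$) yields derivative bounds on $\rho-\rho_j$ matching the reciprocal side lengths of $R_{k,j}$. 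Factoring $m_{k,j}(\xi)=A_{k,j}(\xi)e^{i\rho_j(\xi)}$ and observing that $e^{i\rho_j(\xi)}$ is a linear phase and hence corresponds to a translation in physical space, we have $\|\mathcal{F}^{-1}[m_{k,j}]\|_{L^1}=\|\mathcal{F}^{-1}[A_{k,j}]\|_{L^1}$. The amplitude $A_{k,j}$ is then a smooth bump adapted to $R_{k,j}$ with $\|A_{k,j}\|_\infty\lesssim 2^{-k(\kappa_\Omega+\epsilon)}$, so rescaling to a unit rectangle followed by integration by parts gives
\begin{equation*}
\|\mathcal{F}^{-1}[A_{k,j}]\|_{L^1(\mathbb{R}^2)}\lesssim 2^{-k(\kappa_\Omega+\epsilon)}.
\end{equation*}
Summing over $j$ and $k$ yields
\begin{equation*}
\|\mathcal{F}^{-1}[a(\rho(\cdot))e^{i\rho(\cdot)}]\|_{L^1}\lesssim \sum_{k\geq -2M}N_k\,2^{-k(\kappa_\Omega+\epsilon)}\lesssim \sum_{k\geq -2M}2^{-k(\epsilon-\epsilon'')},
\end{equation*}
which converges for any $\epsilon''<\epsilon$; the finitely many low-frequency terms ($-2M\leq k\leq 0$) are handled crudely using $\|a\|_\infty$ and compactness.

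The main obstacle will be justifying the derivative bounds on $\rho-\rho_j$, since for a general convex $\Omega$ the Minkowski functional $\rho$ is only Lipschitz. I expect to proceed by mollifying $\rho$ at scale $2^{-k}$ on the slab, and then verifying, using convexity and the cap definition, that the outward normal to $\partial\Omega$ rotates by at most $O((2^k\,\mathrm{diam}(B_j))^{-1})$ across $B_j$; concretely, in the model smooth case with curvature $\kappa$ one has $\mathrm{diam}(B_j)\sim\sqrt{2^{-k}/\kappa}$ and the normal rotation is $\sim\sqrt{2^{-k}\kappa}$, whose product is exactly $2^{-k}$, and analogous computations cover the polygonal and intermediate Cantor-type regimes described in Section~1. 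Once this geometric lemma is in place, the Fourier-analytic reduction above proceeds automatically and gives the claimed $L^1$ bound with a constant depending only on $M$, $\epsilon$, and the symbol seminorms of $a$.
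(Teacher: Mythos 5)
Your overall architecture (dyadic decomposition in $\rho$, an angular decomposition of $\partial\Omega$ into caps of width $2^{-k}$, counting the pieces by $N(\Omega,2^{-k})\lesssim_{\epsilon''}2^{k(\kappa_\Omega+\epsilon'')}$, and gaining $2^{-k(\kappa_\Omega+\epsilon)}$ per piece) is the same as the paper's, but two steps do not hold as stated. First, the geometric lemma you invoke is false for a minimal cover by caps: if $\ell_j$ supports $\Omega$ along a long flat edge that meets a corner, the cap $B(p_j,\ell_j,2^{-k})$ has diameter $\sim 1$ while the outer normal rotates by an angle $\sim 1$ across it, not by $O((2^k\mathrm{diam}\,B_j)^{-1})=O(2^{-k})$. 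What is actually needed is a decomposition into boundary arcs on which (arc length)$\times$(normal variation) $\le 2^{-k}$ --- exactly the partition $\mathfrak{A}(2^{-k})$ of Section \ref{prelim} --- and the fact that the number of such arcs is still comparable to $N(\Omega,2^{-k})$ (up to logarithmic factors, absorbed into $\epsilon$) is the nontrivial Lemma \ref{covlemma} from \cite{sz}, not a consequence of the covering definition. (As an aside, the short side of your slab is roughly parallel to $\ell_j$, not along $n_j$: the radial extent is $\sim 2^k$ while the tangential extent is $\sim 2^k\mathrm{diam}\,B_j$.)

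Second, and more seriously, the assertion that $A_{k,j}$ is ``a smooth bump adapted to $R_{k,j}$'' requires pointwise bounds on second tangential derivatives of $\rho-\rho_j$, i.e.\ on the curvature of $\partial\Omega$, which a general convex domain does not possess ($\gamma''$ is only a measure); even after passing to smooth approximating domains (which itself requires Lemma \ref{approxlemma} together with a Fatou argument), the only uniform information on each arc is the integrated bound $\int|\gamma''|\lesssim$ (normal variation), never a pointwise one. Your proposed remedy --- mollifying $\rho$ at scale $2^{-k}$ --- changes the multiplier, and the error $e^{i\rho}-e^{i\rho_{\mathrm{moll}}}$ must then itself be estimated in $L^1$ after taking the inverse Fourier transform; this is not automatic, and in the paper it is precisely the content of the delicate approximation and error analysis (Lemmas \ref{approxlemma2} and \ref{errorlemma}) developed only for the endpoint Theorem \ref{symbol2}. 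For Theorem \ref{symbol1} the paper avoids mollification altogether: it integrates by parts only once in the angular variable, so that only $\int|\gamma''|$ over each arc appears (controlled via (\ref{slope})), and compensates with a further dyadic decomposition (the cutoffs $\Phi_{k,j,n}$) in the distance to the singular set together with integrations by parts in $s$. Your reduction to ``rescale to a unit rectangle and integrate by parts'' hides exactly this difficulty, so as written the argument has a genuine gap at that point.
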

The Fourier inversion formula
\begin{align}\label{subord2}
m(\rho(\xi))=\frac{1}{2\pi}\int\widehat m(\tau)e^{i\tau\rho(\xi)}\,d\tau,
\end{align}
which is a more efficient subordination formula than (\ref{subord}), gives the following corollary.

\begin{corollary}\label{symbolcor1}
Let $\Omega$ and $\rho$ be as in the statement of Theorem \ref{symbol1}. For $\epsilon\ge 0$, define
\begin{align*}
\Norm{m}_{B(\kappa_{\Omega}, \epsilon)}:=\int|\widehat{m}(\tau)|(1+|\tau|)^{\kappa_{\Omega}+\epsilon}\,d\tau. 
\end{align*}
If $m$ is a bounded, measurable function supported in $(1/2, 2)$, then
\begin{align*}
\Norm{\mathcal{F}[m\circ\rho]}_{L^1(\mathbb{R}^2)}\lesssim_{\epsilon, M}\Norm{m}_{B(\kappa_{\Omega}, \epsilon)}
\end{align*}
for every $\epsilon>0$.
\end{corollary}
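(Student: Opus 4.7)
The plan is to use the Fourier inversion formula (\ref{subord2}) to express $m\circ\rho$ as a superposition of oscillatory multipliers of the type studied in Theorem \ref{symbol1}, then to estimate each such kernel in $L^1$ by rescaling. Fix $\chi\in C^{\infty}_0(\mathbb{R})$ with $\chi\equiv 1$ on $[1/2,2]$ and $\supp\chi\subset[1/4,4]$. The hypothesis $\|m\|_{B(\kappa_{\Omega},\epsilon)}<\infty$ ensures $\widehat m\in L^1$, so Fubini applied to (\ref{subord2}) (after multiplying by $\chi(\rho(\xi))$, which equals $1$ wherever $m\circ\rho$ is nonzero) gives
\begin{align*}
m(\rho(\xi)) = \frac{1}{2\pi}\int\widehat m(\tau)\,\chi(\rho(\xi))\,e^{i\tau\rho(\xi)}\,d\tau.
\end{align*}
Passing to inverse Fourier transforms and invoking Minkowski's inequality reduces the corollary to the pointwise bound
\begin{align*}
\|K_\tau\|_{L^1(\mathbb{R}^2)} \lesssim (1+|\tau|)^{\kappa_{\Omega}+\epsilon},\qquad K_\tau := \mathcal{F}^{-1}\bigl[\chi(\rho(\cdot))\,e^{i\tau\rho(\cdot)}\bigr].
\end{align*}

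Complex conjugation (which merely reflects the kernel) allows one to assume $\tau>0$. For $\tau\ge 1$ the homogeneity identity $\rho(\eta/\tau)=\rho(\eta)/\tau$ and the dilation $\eta=\tau\xi$ show that $\|K_\tau\|_{L^1}$ coincides with $\|\mathcal{F}^{-1}[a_\tau(\rho(\cdot))\,e^{i\rho(\cdot)}]\|_{L^1}$, where $a_\tau(s):=\chi(s/\tau)$. This $a_\tau$ is smooth, supported in $[\tau/4,4\tau]\subset\mathbb{R}\setminus[-2^{-2M},2^{-2M}]$, and $(1+|s|)\asymp\tau$ on its support, so
\begin{align*}
|D^\beta a_\tau(s)|\lesssim_\beta\tau^{-\beta}\lesssim_\beta\tau^{\kappa_\Omega+\epsilon}(1+|s|)^{-\kappa_\Omega-\epsilon-\beta};
\end{align*}
that is, $a_\tau$ is a symbol of order $-\kappa_\Omega-\epsilon$ with seminorms of size $O(\tau^{\kappa_\Omega+\epsilon})$. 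Theorem \ref{symbol1} therefore gives $\|K_\tau\|_{L^1}\lesssim\tau^{\kappa_\Omega+\epsilon}$. For $0\le\tau\le 1$ one instead writes $\chi(\rho(\xi))e^{i\tau\rho(\xi)} = b_\tau(\rho(\xi))\,e^{i\rho(\xi)}$ with $b_\tau(s):=\chi(s)e^{i(\tau-1)s}$; this $b_\tau$ is smooth, compactly supported in $[1/4,4]$, with symbol seminorms bounded uniformly in $\tau\in[0,1]$, so Theorem \ref{symbol1} gives $\|K_\tau\|_{L^1}\lesssim 1$.

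Combining the two ranges yields the required pointwise bound, which upon integration against $|\widehat m(\tau)|$ completes the proof. The main technical point is the quantitative dependence of Theorem \ref{symbol1} on the symbol constants: the argument requires that dependence to be (at worst) linear so that the factor $\tau^{\kappa_\Omega+\epsilon}$ produced by rescaling propagates faithfully into the final bound. This linear dependence is implicit in the statement of Theorem \ref{symbol1} (which asserts a bound in terms of ``the quantitative estimates for $a$'' as a symbol) and should emerge naturally from its proof, since the oscillatory integral analysis behind it is linear in $a$.
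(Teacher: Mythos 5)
Your proposal is correct and follows essentially the same route as the paper: expand $m\circ\rho$ via the Fourier inversion formula (\ref{subord2}) with a cutoff equal to $1$ on the support of $m\circ\rho$, apply Minkowski's inequality, rescale using the homogeneity $\tau\rho(\xi)=\rho(\tau\xi)$, and invoke Theorem \ref{symbol1} with symbol seminorms of size $O((1+|\tau|)^{\kappa_{\Omega}+\epsilon})$, the linear dependence on the seminorms being automatic by linearity of $a\mapsto\mathcal{F}^{-1}[a(\rho(\cdot))e^{i\rho(\cdot)}]$. Your separate, explicit treatment of $0\le\tau\le 1$ (absorbing the phase into the symbol) and of negative $\tau$ by conjugation is only a minor refinement of the paper's more compressed argument.
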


\begin{proof}[Proof that Theorem \ref{symbol1} implies Corollary \ref{symbolcor1}]
Since $m$ is supported in $(1/2, 2)$, there is a smooth cutoff $\chi:\mathbb{R}^2\to\mathbb{R}$ supported compactly away from the origin such that
\begin{align*}
m(\rho(\xi))=\frac{1}{(2\pi)^2}\int\widehat{m}(\tau)\chi(\xi)e^{i\tau\rho(\xi)}\,d\tau.
\end{align*}
We then have
\begin{align*}
\Norm{\mathcal{F}^{-1}[m\circ\rho]}_{L^1(\mathbb{R}^2)}\le\frac{1}{(2\pi)^2}\int|\widehat{m}(\tau)|\Norm{\mathcal{F}^{-1}[\chi(\cdot)e^{i\tau\rho(\cdot)}]}_{L^1(\mathbb{R}^2)}\,d\tau
\\
=\frac{1}{(2\pi)^2}\int|\widehat{m}(\tau)|\Norm{\mathcal{F}^{-1}[\chi(\frac{\cdot}{\tau})e^{i\rho(\cdot)}]}_{L^1(\mathbb{R}^2)}\,d\tau.
\end{align*}
Now, for any $i\ge 0$ and for every $\epsilon>0$,
\begin{align*}
|D_{\xi}^i[\chi(\frac{\xi}{\tau})]|\lesssim_{i, \epsilon, M}(1+|\tau|)^{\kappa_{\Omega}+\epsilon}(1+|\xi|)^{-\kappa_{\Omega}-\epsilon-i},
\end{align*}
and thus Theorem \ref{symbol1} implies that
\begin{align*}
\Norm{\mathcal{F}^{-1}[\chi(\frac{\cdot}{\tau})e^{i\rho(\cdot)}]}_{L^1(\mathbb{R}^2)}\lesssim_{\epsilon, M}(1+|\tau|)^{\kappa_{\Omega}+\epsilon}.
\end{align*}
It follows that
\begin{align*}
\Norm{\mathcal{F}^{-1}[m\circ\rho]}_{L^1(\mathbb{R}^2)}\lesssim_{\epsilon, M}\int|\widehat{m}(\tau)|(1+|\tau|)^{\kappa_{\Omega}+\epsilon}
\end{align*}
for every $\epsilon>0$.
\end{proof}

In the special case that $\kappa_{\Omega}=1/2$, we are able to obtain the following improvement to Theorem \ref{symbol1}.

\begin{theorem}\label{symbol2}
Let $\Omega$ be a convex domain satisfying (\ref{ballbound}) with $\kappa_{\Omega}=1/2$ and $\rho$ its Minkowski functional. Let $a:\mathbb{R}^2\to\mathbb{C}$ be a smooth function supported outside $[-2^{-2M}, 2^{-2M}]$ such that $a$ is a symbol of order $-1/2$, that is, for every integer $\beta\ge 0$,
\begin{align*}
|D^{\beta}a(\xi)|\lesssim_{\beta}(1+|\xi|)^{-1/2-\beta}.
\end{align*}
Then the operator $T$ defined on Schwartz functions $f$ by 
\begin{align*}
\mathcal{F}[Tf](\xi)=a(\rho(\xi))e^{i\rho(\xi)}\mathcal{F}[f](\xi)
\end{align*}
extends to a bounded linear operator from the Hardy space $H^1(\mathbb{R}^2)$ to $L^1(\mathbb{R}^2)$, where the operator norm depends only on $M$ and the quantitative estimates for $a$ as a symbol of order $-1/2$.
\end{theorem}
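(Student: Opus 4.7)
The plan is to prove Theorem \ref{symbol2} by an atomic decomposition argument, adapting the Seeger--Sogge--Stein and Miyachi--Peral strategy from the smooth setting. Let $f$ be an $H^1(\mathbb{R}^2)$-atom supported in $B(0,r)$, with $\|f\|_\infty \le r^{-2}$ and $\int f = 0$; by translation invariance, it suffices to show $\|Tf\|_{L^1}\lesssim 1$ uniformly in $r$ and in $f$. I decompose $T=\sum_{j\ge 0}T_j$ dyadically in frequency: fix $\psi\in C_c^\infty(\mathbb{R})$ with $\sum_{j\ge 0}\psi(2^{-j}s)=1$ for $s\ge 1$, and let $T_j$ be the operator with multiplier $m_j(\xi):=a(\rho(\xi))\psi(2^{-j}\rho(\xi))e^{i\rho(\xi)}$ and kernel $K_j$. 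Since $|m_j|\lesssim 2^{-j/2}$ by the symbol hypothesis, Plancherel gives $\|T_j\|_{L^2\to L^2}\lesssim 2^{-j/2}$.

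The crucial geometric input is a wave-packet localization: I claim that $K_j$ is essentially supported in a $C2^{-j}$-neighborhood of the polar dual $\partial\Omega^{*}$. Using the cap decomposition of $\partial\Omega$ at scale $2^{-j}$ from \cite{sz} (with $\lesssim 2^{j/2}$ caps of tangential frequency-extent $\sim 2^{j/2}$, since $\kappa_\Omega = 1/2$), I write $T_j=\sum_\nu T_{j,\nu}$. Each $K_{j,\nu}$, by the phase $e^{i\rho(\xi)}$, concentrates on a physical tube of dimensions $2^{-j}\times 2^{-j/2}$ centered at the dual point of $\partial\Omega^{*}$; since $\Omega^{*}$ is convex and bounded, the $\delta$-neighborhood of $\partial\Omega^{*}$ has measure $\lesssim \delta$. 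Convolving with $f$ then shows $T_jf$ is essentially supported in a set of measure $\lesssim r + 2^{-j}$.

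I then split the sum at the critical scale $j_0$ with $2^{j_0}\sim r^{-1}$. For $j\le j_0$ (low frequency), I exploit the moment condition $\int f=0$: since $|\widehat f(\xi)|\lesssim |\xi|r\lesssim 2^jr$ on the annulus $\rho(\xi)\sim 2^j$ of area $\sim 2^{2j}$, Plancherel gives
\begin{equation*}
\|T_jf\|_{L^2}\lesssim 2^{3j/2}r,
\end{equation*}
whence by H\"older on the essential support (of measure $\lesssim 2^{-j}$),
\begin{equation*}
\|T_jf\|_{L^1}\lesssim 2^{-j/2}\cdot 2^{3j/2}r = 2^jr,
\end{equation*}
which sums over $j\le j_0$ to $O(1)$. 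For $j>j_0$ (high frequency), the direct $L^2$ bound $\|T_jf\|_{L^2}\lesssim 2^{-j/2}\|f\|_{L^2}\lesssim 2^{-j/2}r^{-1}$ combined with the essential support (of measure $\lesssim r$) yields $\|T_jf\|_{L^1}\lesssim r^{1/2}\cdot 2^{-j/2}r^{-1} = r^{-1/2}2^{-j/2}$, which sums over $j\ge j_0$ to $r^{-1/2}\cdot r^{1/2}=O(1)$. Adding the two ranges proves $\|Tf\|_{L^1}\lesssim 1$ uniformly over atoms.

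The main obstacle is the wave-packet localization of $K_j$ near the $2^{-j}$-neighborhood of $\partial\Omega^{*}$ when $\partial\Omega$ is only assumed convex with $\kappa_\Omega = 1/2$. In the smooth setting this is immediate from stationary phase, but in the present generality one must rely solely on the Seeger--Ziesler cap structure, and track not only the $L^1$-norm of each cap's contribution (the information used in Theorem \ref{symbol1}) but also the precise geometric support of each wave packet in physical space, which is the genuine novelty required beyond the machinery already developed for Theorem \ref{symbol1}.
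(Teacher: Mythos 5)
Your overall architecture --- atomic decomposition, dyadic frequency decomposition $T=\sum_j T_j$, an $L^2$ bound combined with a measure estimate for a set near the dual curve, a split at the atom scale $2^{j_0}\sim r^{-1}$, and use of the moment condition at low frequencies --- does parallel the paper's proof in spirit: the paper builds an exceptional set $\mathcal{N}_Q$ of measure $\lesssim 2^{-l}$ out of dual boxes adapted to the caps at the atom scale, handles it by an $L^2$/Hardy--Littlewood--Sobolev argument, uses the atom's cancellation for the frequencies $k<l$, and estimates everything else in $L^1$. But there is a genuine gap at exactly the step you yourself flag as the ``main obstacle'': the claim that $K_j$ is ``essentially supported'' in a $C2^{-j}$-neighborhood of $\partial\Omega^{*}$ is asserted, not proved, and as stated it is also quantitatively off. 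The dual wave packets attached to the Seeger--Ziesler caps at scale $2^{-j}$ have dimensions $2^{-j}\times 2^{-j}|I_\nu|^{-1}$, not $2^{-j}\times 2^{-j/2}$: the hypothesis $\kappa_\Omega=1/2$ controls only the \emph{number} of caps ($\lesssim 2^{j/2}$), not their individual lengths, which for a general convex domain range from $\sim 2^{-j}$ (corner-like points) to $\sim 1$ (flat segments). More importantly, these kernel pieces are not compactly supported in such boxes; your H\"older step requires, in addition to the global $L^2$ bound, that the $L^1$ mass of $T_jf$ \emph{outside} the neighborhood be summable over $j$ with no loss, and your low-frequency sum $\sum_{j\le j_0}2^{j}r\sim 1$ is borderline, so even a logarithmic or $2^{j\epsilon}$ loss destroys the argument (compare Theorem \ref{symbol1}, whose tail estimates carry exactly such $\epsilon$-losses).

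Controlling those tails with only convexity --- no $C^2$ bound on $\partial\Omega$ --- is precisely where the paper's new work lies, and nothing in your proposal substitutes for it. Raw convexity permits only one integration by parts in the angular variable; to gain the extra tangential decay needed at the endpoint, the paper constructs a cap-adapted smooth approximation $\gamma_k$ of $\gamma$ (Lemma \ref{approxlemma2}) whose second and third derivatives are controlled on each scale-$2^{-k}$ cap, proves an error estimate (Lemma \ref{errorlemma}) comparing the kernels built from $\gamma$ and $\gamma_k$, integrates by parts twice against $\gamma_k$ for the main term, and sums using the support lemma (the ``inner'' pieces $S'_{l,k,j}$ land inside $\mathcal{N}_Q$) together with a Cauchy--Schwarz count showing that $\lesssim 2^{(k-l)/2}$ caps of scale $2^{-k}$ meet a cap of scale $2^{-l}$. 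Until you supply an argument of this kind (or some other mechanism giving $L^1$-summable tail bounds uniformly over the cap geometry), the localization claim --- which is the heart of the $H^1\to L^1$ endpoint --- remains unproven, so the proposal is not yet a proof.
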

Using (\ref{subord2}) gives the following corollary.

\begin{corollary}\label{symbolcor2}
Let $\Omega$ and $\rho$ be as in the statement of Theorem \ref{symbol2}. Let $m:\mathbb{R}\to\mathbb{C}$ be a bounded, measurable function supported in $(1/2, 2)$. Then for $1<p<\infty$, the operator $T$ defined on Schwartz functions $f$ by  
\begin{align*}
\mathcal{F}[Tf]=m(\rho(\xi))\mathcal{F}[f]
\end{align*}
extends to a bounded operator on $L^p(\mathbb{R}^2)$, and 
\begin{align*}
\Norm{T}_{H^1(\mathbb{R}^2)\to L^1(\mathbb{R}^2)}\lesssim_{M}\Norm{m}_{B_{1/2, 0}}.
\end{align*}
\end{corollary}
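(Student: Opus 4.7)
My proof will closely parallel the argument that Theorem \ref{symbol1} implies Corollary \ref{symbolcor1}, now using Theorem \ref{symbol2} as the input and introducing interpolation at the end to deduce the $L^p$ conclusion. Choose a smooth cutoff $\psi:\mathbb{R}\to[0,1]$ with $\psi\equiv 1$ on $[1/2,2]$ and $\mathrm{supp}(\psi)\subset[1/4,4]$. Since $m$ is supported in $(1/2,2)$, Fourier inversion (\ref{subord2}) gives
\begin{align*}
m(\rho(\xi))=\frac{1}{2\pi}\int \widehat m(\tau)\,\psi(\rho(\xi))\,e^{i\tau\rho(\xi)}\,d\tau.
\end{align*}
Writing $\widetilde m_\tau(\xi):=\psi(\rho(\xi))e^{i\tau\rho(\xi)}$, the whole problem reduces, by Minkowski's inequality for integrals, to proving the pointwise-in-$\tau$ estimate
\begin{align*}
\Norm{T_{\widetilde m_\tau}}_{H^1\to L^1}\lesssim_M (1+|\tau|)^{1/2},
\end{align*}
which integrated against $\widehat m$ yields exactly $\|T_{m\circ\rho}\|_{H^1\to L^1}\lesssim_M \|m\|_{B_{1/2,0}}$.

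To prove the pointwise estimate for $\tau\gg 1$, I exploit the $1$-homogeneity of $\rho$. Set $a_\tau(s):=\psi(s/\tau)$; then $a_\tau$ is supported on $s\sim \tau$, and on this support the trivial bound $|D^\beta a_\tau(s)|\lesssim_\beta \tau^{-\beta}$ factors as $\tau^{1/2}\cdot\tau^{-1/2-\beta}\sim \tau^{1/2}(1+|s|)^{-1/2-\beta}$, so $a_\tau$ is a symbol of order $-1/2$ with seminorms $O(\tau^{1/2})$. For $\tau$ above an $M$-dependent threshold, $\mathrm{supp}(a_\tau)$ avoids $[-2^{-2M},2^{-2M}]$, and Theorem \ref{symbol2} applies to $n_\tau(\eta):=a_\tau(\rho(\eta))e^{i\rho(\eta)}$, giving $\|T_{n_\tau}\|_{H^1\to L^1}\lesssim_M \tau^{1/2}$. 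The homogeneity identity $n_\tau(\tau\xi)=\psi(\rho(\xi))e^{i\tau\rho(\xi)}=\widetilde m_\tau(\xi)$ together with a change of variables on the Fourier side gives $T_{\widetilde m_\tau}f(x)=T_{n_\tau}(f(\tau\cdot))(x/\tau)$, and the dilation identities $\|g(\tau\cdot)\|_{L^1}=\tau^{-2}\|g\|_{L^1}$ and $\|g(\tau\cdot)\|_{H^1}=\tau^{-2}\|g\|_{H^1}$ (the latter from dilation invariance of the Riesz transforms) transfer the estimate to $\|T_{\widetilde m_\tau}\|_{H^1\to L^1}\lesssim_M \tau^{1/2}$. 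The small-$\tau$ range is trivial since $\widetilde m_\tau$ is then a smooth compactly supported multiplier with bounds uniform in $\tau$. For $\tau<0$, I use the reflection/conjugation identity $\|T_\mu\|_{H^1\to L^1}=\|T_{\overline{\mu}(-\cdot)}\|_{H^1\to L^1}$: applied to $\mu=\widetilde m_\tau$, this replaces $\rho$ by $\rho'(\xi):=\rho(-\xi)$, which is the Minkowski functional of $-\Omega$. Since $-\Omega$ also satisfies (\ref{ballbound}) with $\kappa_{-\Omega}=1/2$, Theorem \ref{symbol2} applies equally there, finishing the bound for negative $\tau$.

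For the $L^p$ conclusion with $1<p<\infty$, the trivial $L^2\to L^2$ bound (from $\|m\|_\infty<\infty$) combined with the just-proved $H^1\to L^1$ endpoint gives $L^p\to L^p$ for $1<p<2$ by complex interpolation between Hardy spaces and $L^q$ spaces. The range $2<p<\infty$ follows by duality since $T^*_{m\circ\rho}=T_{\bar m\circ\rho}$ and $\|\bar m\|_{B_{1/2,0}}=\|m\|_{B_{1/2,0}}$.

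The main technical point is the $\tau$-scaling in the second paragraph: transferring the Theorem \ref{symbol2} bound on $T_{n_\tau}$ to $T_{\widetilde m_\tau}$ without losing any factor of $\tau$ requires the matched $\tau^{-2}$ scaling of both $L^1$ and $H^1$ under $f\mapsto f(\tau\cdot)$. The rest of the argument is essentially bookkeeping: packaging the $\tau^{1/2}$ growth into the weighted norm $\|m\|_{B_{1/2,0}}$, handling $\tau<0$ by the reflected domain, and running standard interpolation-and-duality to reach $L^p$.
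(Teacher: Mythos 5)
Your argument is, in substance, the proof the paper has in mind: the corollary is deduced from Theorem \ref{symbol2} exactly as Corollary \ref{symbolcor1} is deduced from Theorem \ref{symbol1}, namely via the subordination formula (\ref{subord2}) with a cutoff, the observation that the rescaled cutoff is a symbol of order $-1/2$ with seminorms $O((1+|\tau|)^{1/2})$, and an integration in $\tau$. Your additional bookkeeping --- the matched $\tau^{-2}$ dilation scaling of $H^1$ and $L^1$ needed because one is now transferring an operator norm rather than an $L^1$ kernel norm, the treatment of $\tau<0$ via the reflected domain $-\Omega$ (which satisfies (\ref{ballbound}) and has $\kappa_{-\Omega}=\kappa_{\Omega}=1/2$), and the interpolation-plus-duality step producing the $L^p$ statement from the $H^1\to L^1$ and $L^2$ bounds --- is correct and fills in details the paper leaves to the reader.

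The one justification that does not stand as written is the small-$\tau$ case: you dismiss it because ``$\widetilde m_\tau$ is a smooth compactly supported multiplier,'' but $\rho$ is in general only Lipschitz (domains with $\kappa_{\Omega}=1/2$ can have corners, as in Figure \ref{fig1}), so $\psi(\rho(\cdot))e^{i\tau\rho(\cdot)}$ need not be smooth in $\xi$ and no ``trivial'' bound is available by that route. The repair is immediate and stays within the theorem you are quoting: for $|\tau|\lesssim_M 1$ do not rescale at all, but write $\psi(\rho(\xi))e^{i\tau\rho(\xi)}=a_\tau(\rho(\xi))e^{i\rho(\xi)}$ with $a_\tau(s)=\psi(s)e^{i(\tau-1)s}$, which is smooth, supported in $[1/4,4]$ (hence outside $[-2^{-2M},2^{-2M}]$), and a symbol of order $-1/2$ with seminorms uniform over $|\tau|\lesssim_M 1$; Theorem \ref{symbol2} then gives $\Norm{T_{\widetilde m_\tau}}_{H^1\to L^1}\lesssim_M 1$ uniformly on that range, after which your argument proceeds unchanged.
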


The proof that Theorem \ref{symbol2} implies Corollary \ref{symbolcor2} is similar to the proof that Theorem \ref{symbol1} implies Corollary \ref{symbolcor1}, and is left to the reader.
\newline
\indent
Finally, we would like to remark that while the proof of Theorem \ref{symbol1} draws heavily on ideas from \cite{sz} and \cite{sss}, the proof of Theorem \ref{symbol2} requires the introduction of new techniques.

\subsection*{Generalizations of Theorem \ref{symbol1}}
Theorem \ref{symbol1} applies only to multipliers supported compactly away from the origin. Using Calder\'{o}n-Zygmund theory, we may generalize the result of Theorem \ref{symbol1} to multipliers with non-compact support.
\begin{theorem}\label{multscalethm}
Fix a smooth function $\phi$ supported compactly away from the origin. Let $m$ be a measurable function on $\mathbb{R}$ with $\Norm{m}_{\infty}\le 1$. Let $T$ be the operator defined on Schwartz functions $f$ by \begin{align*}
\mathcal{F}[Tf](\xi)=m(\rho(\xi))\mathcal{F}[f](\xi).
\end{align*}
Then for every $\epsilon>0$ and $1<p<\infty$,
\begin{align*}
\Norm{m\circ\rho}_{M^p}\lesssim_{\epsilon, p}\sup_{t>0}\Norm{\phi(\cdot)m(t\cdot)}_{B_{\kappa_{\Omega}, \epsilon}}.
\end{align*}
\end{theorem}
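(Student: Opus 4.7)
The strategy is a dyadic decomposition combined with Calder\'{o}n--Zygmund theory. Since $\|m\|_\infty\le 1$, Plancherel immediately gives the $L^2\to L^2$ bound with constant $1$, so the main task is to verify a H\"{o}rmander integral condition on the convolution kernel $K=\mathcal F^{-1}[m\circ\rho]$ with constant $A:=\sup_{t>0}\|\phi(\cdot)m(t\cdot)\|_{B_{\kappa_\Omega,\epsilon}}$. A routine covering argument on $(0,\infty)$ lets us replace $\phi$ by a fixed bump $\psi\in C_c^\infty((1/2,2))$ generating a dyadic partition $\sum_k\psi(2^{-k}s)=1$ for $s>0$, with $\|\psi\,m(2^k\cdot)\|_{B_{\kappa_\Omega,\epsilon}}\lesssim A$ uniformly in $k$. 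Decompose $m=\sum_k m_k$ where $m_k(s):=\psi(2^{-k}s)m(s)$, and write $K=\sum_k K_k$ with $K_k:=\mathcal F^{-1}[m_k\circ\rho]$. The homogeneity of $\rho$ gives the scaling $K_k(x)=2^{2k}\tilde K_k(2^k x)$ where $\tilde K_k=\mathcal F^{-1}[(\psi\cdot m(2^k\cdot))\circ\rho]$, so Corollary~\ref{symbolcor1} yields $\|K_k\|_{L^1}=\|\tilde K_k\|_{L^1}\lesssim A$ uniformly in $k$.

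To establish $\sup_y\int_{|x|>2|y|}|K(x-y)-K(x)|\,dx\lesssim A$, I would split the $k$-sum at the threshold $2^k|y|\sim 1$. For $2^k|y|\le 1$, apply the mean-value bound $\int|K_k(x-y)-K_k(x)|\,dx\le|y|\,\|\nabla K_k\|_{L^1}$, and bound $\|\nabla K_k\|_{L^1}\lesssim 2^k A$. The gradient estimate is obtained by mimicking the proof of Corollary~\ref{symbolcor1} applied to the multiplier $i\xi\,m_k(\rho(\xi))$: Fourier inversion in $\tau$ produces the vector-valued kernel $\nabla W_\tau=\mathcal F^{-1}[i\xi\chi(\xi)e^{i\tau\rho(\xi)}]$, and the rescaling $\xi\mapsto\eta/\tau$ (exactly as in the proof of the scalar case) shows $\|\nabla W_\tau\|_{L^1}\lesssim(1+|\tau|)^{\kappa_\Omega+\epsilon}$, so integrating against $|\widehat{\tilde m_k}(\tau)|$ produces the bound by $A$. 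Summing the resulting geometric series over $k$ with $2^k|y|\le 1$ contributes $\lesssim A$.

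For $2^k|y|>1$ the uniform bound $\|K_k\|_{L^1}\lesssim A$ is not summable, so I would instead prove the weighted estimate
\begin{align*}
\bigl\||x|^\delta\tilde K_k\bigr\|_{L^1(\mathbb R^2)}\lesssim_\delta A
\end{align*}
for some $0<\delta<\epsilon$. Granting this, Chebyshev gives $\int_{|x|>r}|\tilde K_k(x)|\,dx\lesssim A r^{-\delta}$, which after rescaling yields the summable contribution $A\sum_{2^k|y|>1}(2^k|y|)^{-\delta}\lesssim A$. To prove the weighted bound, use $\tilde K_k(x)=\frac1{(2\pi)^2}\int\widehat{\tilde m_k}(\tau)W_\tau(x)\,d\tau$ with $W_\tau=\mathcal F^{-1}[\chi(\cdot)e^{i\tau\rho(\cdot)}]$: non-stationary phase shows $W_\tau$ is essentially supported in $|x|\lesssim|\tau|$, with complementary contribution $O(|\tau|^{-N})$ for any $N$, so $\||x|^\delta W_\tau\|_{L^1}\lesssim|\tau|^\delta\|W_\tau\|_{L^1}\lesssim(1+|\tau|)^{\kappa_\Omega+\epsilon}$ after applying Theorem~\ref{symbol1} with the smaller smoothness budget $\epsilon-\delta>0$. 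Integrating against $|\widehat{\tilde m_k}(\tau)|$ bounds the weighted norm by $\|\tilde m_k\|_{B(\kappa_\Omega,\epsilon)}\lesssim A$.

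Combining the two contributions verifies H\"{o}rmander's condition, and the Calder\'{o}n--Zygmund theorem then gives weak type $(1,1)$; interpolation with the $L^2$ bound and duality deliver the desired $L^p$ estimate for $1<p<\infty$. The main obstacle I anticipate is making the quantitative non-stationary-phase decay of $W_\tau$ rigorous when $\partial\Omega$ is not smooth: the usual integration-by-parts argument is unavailable, and one must instead exploit a geometric decomposition based on the supporting-line family $\mathcal T(\Omega,p)$ underlying the proof of Theorem~\ref{symbol1} to establish that $W_\tau$ is concentrated near the ``wave front'' at distance $\sim|\tau|$ from the origin.
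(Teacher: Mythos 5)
Your route is genuinely different from the paper's: the paper deduces Theorem \ref{multscalethm} in one stroke from Theorem \ref{symbol1} (through Corollary \ref{symbolcor1}, which gives the uniform single-scale kernel bound $\Norm{\mathcal{F}^{-1}[(\phi\,m(t\cdot))\circ\rho]}_{L^1}\lesssim A$) combined with the quoted extrapolation result of Seeger, Proposition A of \cite{s}, whereas you verify H\"ormander's integral condition for $K=\sum_k K_k$ directly and run Calder\'on--Zygmund theory. Most of your steps are sound: the uniform bound $\Norm{K_k}_{L^1}\lesssim A$ is Corollary \ref{symbolcor1} after rescaling, and the gradient bound $\Norm{\nabla K_k}_{L^1}\lesssim 2^k A$ is immediate from the identity $\nabla K_k=K_k\ast 2^{3k}\Phi(2^k\cdot)$ with $\Phi$ Schwartz, exactly as in the proof of Lemma \ref{klemma} (this is simpler and safer than reproving Theorem \ref{symbol1} for the non-quasiradial symbol $i\xi\chi(\xi)$, which the theorem as stated does not cover). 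The splitting of the $k$-sum at $2^k|y|\approx 1$ and the final summations are fine once the two ingredient estimates are available.

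The genuine gap is the weighted estimate $\Norm{|x|^{\delta}W_\tau}_{L^1}\lesssim(1+|\tau|)^{\kappa_\Omega+\epsilon}$, which carries the entire regime $2^k|y|>1$ and which you justify only by asserting that non-stationary phase localizes $W_\tau$ to $|x|\lesssim|\tau|$ with $O(|\tau|^{-N})$ error. As you anticipate, that justification is not available here: $\rho$ is merely Lipschitz, and even after approximating by smooth domains (Lemma \ref{approxlemma}) all constants may depend only on the $C^1$ data, so repeated integration by parts in $\xi$ is illegitimate and rapid decay in the far zone is not what the method produces. What is true, and suffices, is a weighted version of the paper's own far-field analysis: writing $W_\tau(x)=\tau^{-2}V_\tau(x/\tau)$ with $V_\tau=\mathcal{F}^{-1}[\chi(\cdot/\tau)e^{i\rho(\cdot)}]$ and recalling that $\chi(\cdot/\tau)$ is a symbol of order $-\kappa_\Omega-\epsilon$ with constant $(1+|\tau|)^{\kappa_\Omega+\epsilon}$, it is enough to insert the weight $|z|^{\delta}$ into the proof of Proposition \ref{mainprop2}: in the zone $2^{6M}\le|z|\le 2^{3k+6M}$ (with $2^k\approx|\tau|$) the integrations by parts in homogeneous coordinates behind (\ref{ballest1}) absorb the weight at the cost of a factor $2^{3k\delta}$, harmless once $\delta$ is small compared with $\epsilon$; in the zone $|z|\ge 2^{3k+6M}$ Lemma E gives geometric decay in the annulus index as in (\ref{ballest2}), into which the weight is absorbed; and for $|z|\lesssim 2^{6M}$ the weight is bounded. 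This yields the weighted bound for some $\delta=\delta(\epsilon,M)>0$ with only polynomially controlled losses and no rapid decay, which is all your sum $\sum_{2^k|y|>1}(2^k|y|)^{-\delta}$ needs. With that estimate actually proved, your argument becomes a legitimate self-contained alternative to citing \cite{s}, and it gives the full range $1<p<\infty$ directly.
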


Theorem \ref{multscalethm} follows immediately from Theorem \ref{symbol1} and the following result from \cite{s}, which we state without proof.

\begin{customprop}{A}[Seeger, \cite{s}]
Suppose that $\sup_{t>0}\Norm{\phi(m(t\cdot))}_{M^p}<\infty$, for some $p\in (1, \infty)$. If  for some $\epsilon>0$, $\sup_{t>0}\Norm{\phi(m(t\cdot))}_{\Lambda_{\epsilon}}<\infty$, then $m\in M_r$, $|1/r-1/2|<|1/p-1/2|.$
\end{customprop}

We will also see in Section \ref{bochnersec} that $L^4(\mathbb{R}^2)$ estimates for a generalized Bochner-Riesz square function leads to a multiplier theorem for quasiradial multipliers in the range $4/3\le p\le 4$. In Section \ref{anasec}, we interpolate this with the result of Theorem \ref{multscalethm} to obtain our final, most general version of Theorem \ref{symbol1}.

\begin{theorem}\label{mainmultthm}
Fix a smooth function $\phi$ supported compactly away from the origin. Let $m$ be a measurable function on $\mathbb{R}$ with $\Norm{m}_{\infty}\le 1$. Let $T$ be the operator defined on Schwartz functions $f$ by \begin{align*}
\mathcal{F}[Tf](\xi)=m(\rho(\xi))\mathcal{F}[f](\xi).
\end{align*}
Let $0\le\theta\le 1$. Then for every $\epsilon>0$ and $\frac{4}{4-\theta}<p<\frac{4}{\theta}$,
\begin{multline*}
\Norm{m\circ\rho}_{M^p}
\\
\lesssim_{\epsilon, p}\sup_{t>0}\bigg(\int|\mathcal{F}_{\mathbb{R}}[\phi(\cdot)m(t\cdot)](\tau)|^{\frac{2}{2-\theta}}(1+|\tau|)^{\frac{2\kappa_{\Omega}+\theta(1-2\kappa_{\Omega})}{2-\theta}+\epsilon}\,d\tau\bigg)^{\frac{2-\theta}{2}}.
\end{multline*}
\end{theorem}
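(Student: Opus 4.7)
The plan is to prove Theorem \ref{mainmultthm} by interpolating between the two endpoint multiplier estimates corresponding to $\theta=0$ and $\theta=1$. At $\theta=0$ the right-hand side reduces to $\sup_{t>0}\|\phi(\cdot)m(t\cdot)\|_{B_{\kappa_\Omega,\epsilon}}$ and the range of $p$ is $(1,\infty)$, which is precisely the content of Theorem \ref{multscalethm}. At $\theta=1$ the integration exponent on $\widehat{m}$ becomes $2$, the weight exponent on $(1+|\tau|)$ becomes $1+\epsilon$ (independent of $\kappa_\Omega$), and the range of $p$ contracts to $(\tfrac{4}{3},4)$; this is the Sobolev-type multiplier theorem to be established in Section \ref{bochnersec} as a consequence of an $L^4(\mathbb{R}^2)$ estimate for a generalized Bochner--Riesz square function.

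For the interpolation I would appeal to Stein's analytic interpolation theorem. Via (\ref{subord2}), write each dilated, localized multiplier as
\[
\phi(\xi)m(t\rho(\xi))=\frac{1}{2\pi}\int \widehat{\phi m(t\cdot)}(\tau)\,\chi(\xi)\,e^{i\tau\rho(\xi)}\,d\tau
\]
for a suitable cutoff $\chi$, insert a holomorphic factor $(1+|\tau|)^{z}$ into the integrand, and linearize the $L^q$-norm on the $\tau$-variable against a test function dual to the target weighted space. This produces an analytic family of operators $T_z$ on the strip $\{0\le\mathrm{Re}\,z\le 1\}$ whose boundary values at $\mathrm{Re}\,z=0$ and $\mathrm{Re}\,z=1$ are controlled exactly by the two endpoint multiplier bounds above, and whose operator norms grow at most polynomially in $|\mathrm{Im}\,z|$ since $|(1+|\tau|)^z|=(1+|\tau|)^{\mathrm{Re}\,z}$.

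The intermediate estimate at $\mathrm{Re}\,z=\theta$ then yields the claimed $L^p(\mathbb{R}^2)\to L^p(\mathbb{R}^2)$ bound for every $p\in(\tfrac{4}{4-\theta},\tfrac{4}{\theta})$, while Stein--Weiss interpolation between weighted $L^1$ and weighted $L^2$ on the $\tau$-variable produces the weighted $L^{2/(2-\theta)}$-norm of $\widehat{\phi m(t\cdot)}$ with weight exponent $\frac{2\kappa_\Omega+\theta(1-2\kappa_\Omega)}{2-\theta}+\epsilon$. Taking the supremum over $t>0$ is permitted because both endpoint inequalities are dilation-invariant. The main obstacle is the simultaneous bookkeeping of the three interpolations---of the exponent $p$ on the output side, of the exponent on $\widehat{m}$, and of the weight power on $(1+|\tau|)$---which must be aligned so that a single $\epsilon$-loss at the interior point absorbs the separate $\epsilon$-losses present at the two endpoints, and so that the dependence on $\kappa_\Omega$ enters only through the weight at $\theta=0$ and disappears cleanly at $\theta=1$.
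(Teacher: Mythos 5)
Your high-level plan is the same as the paper's: Theorem \ref{mainmultthm} is obtained by interpolating Theorem \ref{multscalethm} (the $\theta=0$ endpoint) with Corollary \ref{rieszcor} from Section \ref{bochnersec} (the $\theta=1$ endpoint). The gap is in the interpolation mechanism itself. The analytic family you describe, obtained by inserting only the factor $(1+|\tau|)^{z}$ into the subordination formula (\ref{subord2}), cannot move the integrability exponent of $\widehat{\phi m(t\cdot)}$ from $1$ at $\mathrm{Re}\,z=0$ to $2$ at $\mathrm{Re}\,z=1$: with that family, Stein's theorem at $z=\theta$ yields a bound by the geometric mean $\Norm{\widehat m}_{L^1(w_0)}^{1-\theta}\Norm{\widehat m}_{L^2(w_1)}^{\theta}$, and since by log-convexity the weighted $L^{2/(2-\theta)}$ norm in the statement is \emph{dominated} by this product (and the product may be infinite when the intermediate norm is finite), this proves a strictly weaker estimate than the theorem. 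Moreover, ``linearizing the $L^q$-norm in $\tau$ against a dual test function'' is not available here, because the weighted norm of $\widehat m$ sits on the input side of the inequality, not the output side; to interpolate the exponent on $\widehat m$ you must deform $\widehat m$ itself by a $z$-dependent power (the standard extremal-family trick), or, equivalently, treat $(f,\widehat m)\mapsto T_m f$ as a genuinely bilinear map and invoke a bilinear interpolation theorem.

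The second missing ingredient is the passage from a single localized piece $\phi(\cdot)m(t\cdot)$ to the full multiplier $m\circ\rho$: at the interior point this reassembly over dyadic scales is not automatic, and dilation invariance of the endpoint inequalities does not supply it. The paper handles both issues at once by recasting the endpoints as uniform-in-$N$ bounds for the bilinear operators $T_N(f,\{m_k\})$, with $\{m_k\}$ ranging over the sequence spaces $\ell^{\infty}_N(\tilde{L}^s_1(\mathbb{R}))$ and $\ell^{\infty}_N(\tilde{L}^{\alpha}_2(\mathbb{R}))$ of compactly supported dyadic pieces, applying bilinear real interpolation (Bergh--L\"ofstr\"om) to get (\ref{bilinear3}), and then letting $N\to\infty$; the $\ell^{\infty}$ structure is exactly what converts the interpolated bilinear bound into a bound by $\sup_{t>0}$ of the localized norms. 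Your route could be repaired either by adopting this bilinear (real or complex) formulation, or by building the analytic family through a power-and-weight deformation applied to each dyadic piece of $m$ and verifying the boundary norms scale by scale; as written, both the construction of the family and the summation over scales are missing.
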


\subsection*{Notation}
We now introduce some notation that will be used throughout the rest of the paper. Given a function $f:X\to\mathbb{R}$ and subsets $A\subsetneq B\subset X$, we will write $A\prec f\prec B$ to indicate that $f$ is identically $1$ on $A$ and supported in $B$. Many of our estimates will have constants that depend on the quantity $M$ associated with $\Omega$ given in (\ref{ballbound}). For the sake of convenience, we will often choose to supress this dependence in our notation. Thus we will use the symbols $\lesssim$ and $\approx$ to denote an inequality where the implied constant possibly depends on $M$.

\section{Preliminaries on convex domains in $\mathbb{R}^2$}\label{prelim}
In this section we state some useful facts about convex domains in $\mathbb{R}^2$. Most of these can be found in \cite{sz}, but we include them here for the sake of completeness. Let $\Omega\subset\mathbb{R}^2$ be a bounded, open convex set containing the origin and satisfying (\ref{ballbound}). The proof of the following lemma is straightforward and uses only elementary facts about convex functions; for more details see \cite{sz}.
\begin{customlemma}{B}[Seeger and Ziesler, \cite{sz}]\label{elemlemma}
$\partial\Omega\cap\{x:\,-1\le x_1\le 1,\,x_2\le 0\}$ can be parametrized by
\begin{align}
t\mapsto (t, \gamma(t)),\qquad{}-1\le t\le 1,
\end{align}
where
\begin{enumerate}
\item
\begin{align}
1<\gamma(t)<2^M,\qquad{} -1\le t\le 1.
\end{align}
\item $\gamma$ is a convex function on $[-1, 1]$, so that the left and right derivatives $\gamma_L^{\prime}$ and $\gamma_R^{\prime}$ exist everywhere in $(-1, 1)$ and
\begin{align}
-2^{M-1}\le\gamma_R^{\prime}(t)\le\gamma_L^{\prime}(t)\le 2^{M-1}
\end{align}
for $t\in [-1, 1]$. The functions $\gamma_L^{\prime}$ and $\gamma_R^{\prime}$ are decreasing functions; $\gamma_L^{\prime}$ and $\gamma_R^{\prime}$ are right continuous in $[-1, 1]$.
\item Let $\ell$ be a supporting line through $\xi\in\partial\Omega$ and let $n$ be an outward normal vector. Then
\begin{align}
|\left<\xi, n\right>|\ge 2^{-M}|\xi|.
\end{align}
\end{enumerate}
\end{customlemma}

\subsection*{Decomposition of $\partial\Omega$} As another preliminary ingredient, we need the decomposition of $\partial\Omega\cap \{x:\,-1\le x_1\le 1,\,x_2<0\}$ introduced in \cite{sz}. This decomposition allows us to write $\partial\Omega$ as a disjoint union of pieces on which $\partial\Omega$ is sufficiently ``flat", where the number of pieces in the decomposition is closely related to the covering numbers $N(\Omega, \delta)$. We inductively define a finite sequence of increasing numbers
\begin{align*}
\mathfrak{A}(\delta)=\{a_0, \ldots, a_Q\}
\end{align*}
as follows. Let $a_0=-1$, and suppose $a_0, \ldots, a_{j-1}$ are already defined. If
\begin{align}\label{case1}
(t-a_{j-1})(\gamma_L^{\prime}(t)-\gamma_R^{\prime}(a_{j-1}))\le\delta\text{ for all }t\in (a_{j-1}, 1])
\end{align}
and $a_{j-1}\le 1-2^{-M}\delta$, then let $a_j=1$. If (\ref{case1}) holds and $a_{j-1}>1-2^{-M}\delta$, then let $a_j=a_{j-1}+2^{-M}\delta$. If (\ref{case1}) does not hold, define
\begin{align*}
a_j=\inf\{t\in (a_{j-1}, 1]:\,(t-a_{j-1})(\gamma_L^{\prime}(t)-\gamma_R^{\prime}(a_{j-1}))>\delta\}.
\end{align*}
Now note that (\ref{case1}) must occur after a finite number of steps, since we have $|\gamma_L^{\prime}|, |\gamma_R^{\prime}|\le 2^{M-1}$, which implies that $|t-s||\gamma_L^{\prime}(t)-\gamma_R^{\prime}(s)|<\delta$ if $|t-s|<\delta 2^{-M}$. Therefore this process must end at some finite stage $j=Q$, and so it gives a sequence $a_0<a_1<\cdots<a_Q$ so that for $j=0, \ldots, Q-1$
\begin{align}\label{left} 
(a_{j+1}-a_j)(\gamma_L^{\prime}(a_{j+1})-\gamma_R^{\prime}(a_j))\le\delta,
\end{align}
and for $0\le j<Q-1$,
\begin{align}\label{right}
(t-a_j)(\gamma_L^{\prime}(t)-\gamma_R^{\prime}(a_j))>\delta\qquad\text{if }t>a_{j+1}.
\end{align}
For a given $\delta>0$, this gives a decomposition of 
\begin{align*}
\partial\Omega\cap\{x:\,-1\le x_1\le 1, x_2<0\}
\end{align*} 
into pieces
\begin{align*}
\bigsqcup_{n=0, 1, \ldots, Q-1}\{x\in\partial\Omega:\, x_1\in [a_n, a_{n+1}]\}.
\end{align*}
The number $Q$ in (\ref{left}) and (\ref{right}) is also denoted by $Q(\Omega, \delta)$. Let $R_{\theta}$ denote rotation by $\theta$ radians. The following lemma relates the numbers $Q(R_{\theta}\Omega, \delta)$ to the covering numbers $N(\Omega, \delta)$.
\begin{customlemma}{C}[Seeger and Ziesler, \cite{sz}]\label{covlemma}
There exists a positive constant $C_M$ so that the following statements hold.
\begin{enumerate}
\item $Q(\Omega, \delta)\le C_M\delta^{-1/2}$.
\item $0\le\kappa_{\Omega}\le 1/2$.
\item For any $\theta$, 
\begin{align*}
Q(R_{\theta}\Omega, \delta)\le C_MN(\Omega, \delta)\log(2+\delta^{-1}).
\end{align*}
\item For $\nu=1, \ldots, 2^{2M}$ let $\theta_{\nu}=\frac{2\pi\nu}{2^{2M}}$. Then
\begin{align*}
C_M^{-1}N(\Omega, \delta)\le\sum_{\nu}Q(R_{\theta_{\nu}}\Omega, \delta)\le C_MN(\Omega, \delta)\log(2+\delta^{-1}).
\end{align*}
\end{enumerate}
\end{customlemma}

We may think of $\mathfrak{A}(\delta)$ as a partition of $[-1, 1]$ into intervals. For the purpose of defining a partition of unity, we wish to refine this partition so that consecutive intervals have comparable length, and we construct such a refinement in the proof of the lemma below. Note the improvement to (\ref{count}) in the special case that $\kappa_{\Omega}=1/2$; this will be used later when we prove Theorem \ref{symbol2}. 
\begin{lemma}\label{endpointlemma}
Suppose that $\Omega$ is a convex domain satisfying (\ref{ballbound}). Let $\delta>0$, and let
\begin{align*}
\mathfrak{A}(\delta)=\{a_0, a_1, \ldots a_Q\}
\end{align*}
be the decomposition of $[-1, 1]$ constructed previously, where $a_0=-1$ and $a_1=1$. There exists a refinement 
\begin{align}
\tilde{\mathfrak{A}}(\delta)=\{b_0, b_1, \ldots b_{\tilde{Q}}\}
\end{align} 
of $\mathfrak{A}(\delta)$ with $b_0=-1$ and $b_{\tilde{Q}}=1$, and satisfying the following properties:
\begin{enumerate}
\item 
\begin{align}\label{count}
\text{card}(\tilde{\mathfrak{A}}(2^{-k}))\lesssim k^2N(\Omega, 2^{-k}).
\end{align}
\item Set $I_j=[b_j, b_{j+1}]$. For every $1\le j\le\tilde{Q}$,
\begin{align}\label{slope}
(\gamma^{\prime}(b_j)-\gamma^{\prime}(b_{j-1}))|I_{j-1}|\le 2^{-k}.
\end{align}
\item For every $1\le j\le \tilde{Q}$,
\begin{align}\label{coomp}
|I_{j-1}|/8\le |I_j|\le 8|I_{j-1}|.
\end{align}
\item
\begin{align}\label{sumconstraint}
\sum_j\delta|I_j|^{-1}\lesssim 1.
\end{align}
\end{enumerate}
In the special case that $\kappa_{\Omega}=1/2$, we also have
\begin{align}\label{newcard}
\text{card}(\tilde{\mathfrak{A}}(\delta))\lesssim\delta^{-\kappa_{\Omega}}.
\end{align} 
\end{lemma}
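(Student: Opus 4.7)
The plan is to construct the refinement $\tilde{\mathfrak{A}}(\delta)$ from $\mathfrak{A}(\delta)=\{a_0,\ldots,a_Q\}$ by a graduated dyadic subdivision designed to enforce the comparability condition~(\ref{coomp}). Concretely, for each original interval $I_m=[a_m,a_{m+1}]$ whose neighbors are much smaller, I insert new subdivision points so that the resulting sub-intervals of $I_m$ have lengths forming an approximately geometric progression with common ratio $8$: starting near $a_m$ with a piece of size $s^L := a_m-a_{m-1}$ and growing by factors of $8$ up to size $\asymp|I_m|$, then symmetrically shrinking back down to $s^R := a_{m+2}-a_{m+1}$ near $a_{m+1}$. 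Equivalently, I iteratively bisect any current sub-interval whose length exceeds $8$ times a neighbor's length until the process terminates.

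Property~(\ref{coomp}) is the defining feature of the construction. Property~(\ref{slope}) is immediate from (\ref{left}): each refined sub-interval $[b_{j-1},b_j]$ sits inside some original $[a_m,a_{m+1}]$, and by monotonicity of $\gamma'$,
\begin{align*}
(\gamma'(b_j)-\gamma'(b_{j-1}))(b_j-b_{j-1}) \le (\gamma_L'(a_{m+1})-\gamma_R'(a_m))(a_{m+1}-a_m) \le \delta.
\end{align*}
For property~(\ref{count}), I first observe that every interior original interval satisfies $|I_m|\gtrsim\delta$: taking $t\to a_{m+1}^+$ in (\ref{right}) and using the uniform bound $|\gamma_L'|,|\gamma_R'|\le 2^{M-1}$ from Lemma~B yields $|I_m|\ge 2^{-M-1}\delta$. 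Since also $|I_m|\le 2$, each original interval produces at most $O(\log_8(\delta^{-1}))=O(k)$ sub-intervals. Combining with Lemma~C(3), $Q\lesssim N(\Omega,2^{-k})\log(2+2^k)\lesssim kN(\Omega,2^{-k})$, one concludes $\tilde{Q}\lesssim k^2N(\Omega,2^{-k})$.

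Property~(\ref{sumconstraint}) is where the graduated nature of the subdivision is essential. The sub-intervals of $I_m$ on the left half have lengths $\asymp s^L, 8s^L, 64s^L, \ldots$, and so contribute to $\sum_j\delta|I_j|^{-1}$ a geometric series bounded by $C\delta/s^L$. Combining (\ref{left}) with (\ref{right}) gives $s^L=|I_{m-1}|\asymp\delta/D_{m-1}$, where $D_m:=\gamma_L'(a_{m+1})-\gamma_R'(a_m)$, so the total contribution from $I_m$ is $\asymp D_{m-1}+D_{m+1}$. Summing over $m$, each $D_m$ appears boundedly often and
\begin{align*}
\sum_j \delta|I_j|^{-1} \lesssim \sum_m D_m \le |\gamma'(1)-\gamma'(-1)| \le 2^M.
\end{align*}

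The main obstacle is the sharper cardinality bound~(\ref{newcard}) in the case $\kappa_\Omega=1/2$. Lemma~C(1) provides $Q\lesssim\delta^{-1/2}$ without the logarithmic factor, but the $O(k)$ subdivision blow-up above must also be eliminated to get $\tilde{Q}\lesssim\delta^{-1/2}$. I expect to show $\sum_m m_m\lesssim Q$ by a potential/telescoping argument on $\log|I_m|$, exploiting that the total positive and negative variations of $\log|I_m|$ are both controlled using $\sum_m D_m=O(1)$ together with the geometric structure of the balls $B(p,\ell,\delta)\in\mathcal{B}_\delta$ underlying the definition of $\kappa_\Omega$; this is the delicate part of the argument.
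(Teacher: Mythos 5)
Your construction and your treatment of items (1)--(4) are essentially the paper's: insert points inside each original interval so that the piece lengths form a geometric progression matching the neighboring interval's length at each shared endpoint, note that any refinement inherits (\ref{slope}) from (\ref{left}), bound the per-interval cost by $O(k)$ using $|[a_m,a_{m+1}]|\gtrsim 2^{-M}\delta$ together with Lemma \ref{covlemma}(3), and control $\sum_j\delta|I_j|^{-1}$ by a geometric series reducing it to $\sum_m\delta(a_{m+1}-a_m)^{-1}$, which telescopes via (\ref{right}). (Two small imprecisions there: the two-sided claim $|I_{m-1}|\asymp\delta/D_{m-1}$ need not hold at corners, since (\ref{left}) and (\ref{right}) involve different one-sided derivatives, but only the direction $\delta|I_{m-1}|^{-1}\lesssim\gamma_R'(a_m)-\gamma_R'(a_{m-1})$ is used; and the last interval is not covered by (\ref{right}), but it has length $\ge 2^{-M}\delta$ by construction, so it contributes $O(1)$.)

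The genuine gap is (\ref{newcard}), which you explicitly leave as an expectation. This is not an optional sharpening: the bound $\mathrm{card}(\tilde{\mathfrak{A}}(\delta))\lesssim\delta^{-1/2}$ is what is invoked repeatedly in the proof of the $H^1\to L^1$ endpoint (Theorem \ref{symbol2}), so without it the later argument collapses to the lossy bound (\ref{count}). Moreover, the mechanism you propose is underspecified: the single constraint $\sum_m\delta|I_m|^{-1}\lesssim 1$ (equivalently $\sum_m D_m=O(1)$) does not by itself control the total variation of $\log|I_m|$ at the scale $\delta^{-1/2}$; one must count scale by scale, also using the trivial total-length constraint. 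The paper closes this as follows: by (\ref{right}) and $\sum_m|I_m|\le 2$, for each $L$ the number of original intervals with $|I_m|\approx L$ is $\lesssim\min(L\delta^{-1},L^{-1})$; hence, for each $r\ge 1$, the number of adjacent pairs of original intervals whose lengths have ratio $\approx r$ is at most $\sum_{L\ \mathrm{dyadic}}\min(L\delta^{-1},(rL)^{-1})\lesssim r^{-1/2}\delta^{-1/2}$ (the paper records $r^{-1}\delta^{-1/2}$; either suffices). At such a junction your graduated subdivision inserts only $\lesssim\log r$ points, so the total number of inserted points is $\lesssim\sum_{i\ge 0} i\,2^{-i/2}\delta^{-1/2}\lesssim\delta^{-1/2}$, while the original points number $Q\lesssim\delta^{-1/2}$ by Lemma \ref{covlemma}(1). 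Note that this yields $\mathrm{card}(\tilde{\mathfrak{A}}(\delta))\lesssim\delta^{-1/2}$ for every convex domain, which reads as (\ref{newcard}) exactly when $\kappa_{\Omega}=1/2$; you need to supply this (or an equivalent) counting argument to complete the proof.
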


\begin{proof}[Proof of Lemma \ref{endpointlemma}]
We construct $\tilde{\mathfrak{A}}(\delta)$ as follows. For each $0\le j\le Q-1$, let $\tilde{a}_j$ be the midpoint between $a_j$ and $a_{j+1}$, and consider the set
\begin{align*}
A:=\{a_0, \tilde{a}_0, a_1, \tilde{a}_1, \ldots, \tilde{a}_{Q-1}, {a}_Q\}.
\end{align*}
For $x\in A$, let $x^-:=\max\{y\in A:\,y<x\}$ and $x^+:=\min\{y\in A:\,y>x\}$. For every $x\in A$, we define a set of points $B_x$ as follows. If $x$ satisfies $x^+-x=x-x^{-}$, set $B_x=\{x\}$. If $x$ satisfies $x^+-x>x-x^-$, then iteratively define $B_x$ to be the set of $\lesssim\log(1/\delta)$ many points $B_x=\{y_0, y_1,\ldots, y_N\}$ where $y_0$ is the midpoint between $x$ and $x^+$, and for every $k\ge 0$ set $y_{k+1}$ to be the midpoint between $y_{k}$ and $x$, and stop at the first stage $N$ such that $y_N-x\le x-x^-$. Similarly, if $x$ satisfies $x^+-x<x-x^-$, then iteratively define $B_x$ to be the set of $\lesssim\log(1/\delta)$ many points $B_x=\{y_0, y_1,\ldots, y_N\}$ where $y_0$ is the midpoint between $x$ and $x^-$, and for every $k\ge 0$ set $y_{k+1}$ to be the midpoint between $y_{k}$ and $x$, and stop at the first stage $N$ such that $x-y_N\le x^+-x$. Now let 
\begin{align*}
\tilde{\mathfrak{A}}(\delta)=\bigcup_{x\in A}B_x.
\end{align*}
Clearly, $\tilde{\mathfrak{A}}(\delta)$ satisfies (\ref{slope}), since any refinement of $\mathfrak{A}(\delta)$ automatically satisfies (\ref{slope}). It is also obvious that $\tilde{\mathfrak{A}}(\delta)$ satisfies (\ref{coomp}). Since $\mathfrak{A}(\delta)$ satisfies (\ref{right}), we have
\begin{align*}
\sum_j2^{-k}|I_j|^{-1}\lesssim\sum_j 2^{-k}(a_{j+1}-a_j)^{-1}\lesssim\sum_j(\gamma^{\prime}(a_{j+1})-\gamma^{\prime}(a_j))\lesssim 1,
\end{align*}
so $\tilde{\mathfrak{A}}(\delta)$ satisfies (\ref{sumconstraint}). By Lemma \ref{covlemma}, we have
\begin{align}\label{temp1}
\text{card}(\tilde{\mathfrak{A}}(2^{-k}))=\tilde{Q}+1\lesssim k\cdot\text{card}(\mathfrak{A}(2^{-k}))\lesssim k^2N(\Omega, 2^{-k}).
\end{align}
and so $\tilde{\mathfrak{A}}(\delta)$ satisfies (\ref{count}).
\newline
\indent
In the case that $\kappa_{\Omega}=1/2$, we note that (\ref{right}) implies that for any $L>0$, the number of intervals $[a_j, a_{j+1}]$ such that $(a_{j+1}-a_j)\approx L$ is $\lesssim\min(L\delta^{-1}, L^{-1})$. Thus for any $r>0$ the number of pairs \newline$\big([a_j, a_{j+1}]; [a_{j+1}, a_{j+2}]\big)$ with 
\begin{align*}
\max\bigg(\frac{a_{j+2}-a_{j+1}}{a_{j+1}-a_j}, \frac{a_{j+1}-a_j}{a_{j+2}-a_{j+1}}\bigg)\approx r
\end{align*}
is $\lesssim r^{-1}\delta^{-1/2}$. It follows that the number of points $x\in A$ with 
\begin{align*}
\max\bigg(\frac{x^+-x}{x-x^-}, \frac{x-x^-}{x^+-x}\bigg)\approx r
\end{align*}
is $\lesssim r^{-1}\delta^{-1/2}$. For such points $x$ we have $\text{card}(B_x)\lesssim\log(r)$, and so summing over all dyadic $r=2^k$ we have that
\begin{align*}
\sum_{k\ge 0}k2^{-k}\delta^{-1/2}\lesssim\delta^{-1/2},
\end{align*}
and hence $\tilde{\mathfrak{A}}(\delta)$ satisfies $(\ref{newcard})$.
\end{proof}

\subsection*{Approximating $\Omega$ by convex domains with smooth boundary}
It will be necessary to approximate $\Omega$ by a sequence of convex domains with smooth boundaries. In \cite{sz}, this was done by approximating $\Omega$ by a sequence of convex polygons with sufficiently many vertices and smoothing out the boundary near the vertices. We state the following lemma from \cite{sz} without proof.
\begin{customlemma}{D}[Seeger and Ziesler, \cite{sz}]\label{approxlemma}
Let $\Omega\subset\mathbb{R}^2$ be an open convex domain containing the origin. There is a sequence of convex domains $\{\Omega_n\}$ containing the origin, with Minkowski functionals $\rho_n(\xi)=\inf\{t>0|\,\xi/t\in\Omega_n\}$, so that the following holds:
\begin{enumerate}
\item $\Omega_n\subset\Omega_{n+1}\subset\Omega$ and $\bigcup_n\Omega_n=\Omega$.
\item $\rho_n(\xi)\ge\rho_{n+1}(\xi)\ge\rho(\xi)$ and
\begin{align*}
\frac{\rho_n(\xi)-\rho(\xi)}{\rho(\xi)}\le 2^{-n-1};
\end{align*}
in particular $\lim_{n\to\infty}\rho_n(\xi)=\rho(\xi)$, with uniform convergence on compact sets.
\item $\Omega_n$ has $C^{\infty}$ boundary.
\item If $\delta\ge 2^{-n+2}$ then 
\begin{align*}
N(\Omega_n, 2\delta)\le N(\Omega, \delta).
\end{align*}
\end{enumerate}
\end{customlemma}

\subsection*{Computing $\nabla\rho$} Assuming that $\rho\in C^1(\mathbb{R}^2\setminus\{0\})$, we would like to compute $\nabla\rho(\alpha, \gamma(\alpha))$ for $\alpha\in [-1, 1]$. Since $\nabla\rho$ is homogeneous of degree $0$, this will actually give us $\nabla\rho(\xi)$ for any $\xi$ in a sector of $\mathbb{R}^2\setminus\{0\}$ bounded by rays passing through $(-1, \gamma(-1))$ and $(1, \gamma(1))$. Note that 
\begin{align}\label{gradperp}
\nabla\rho(\alpha, \gamma(\alpha))\cdot (1, \gamma^{\prime}(\alpha))=0,
\end{align}
and thus $\nabla\rho(\alpha, \gamma(\alpha))$ is parallel to $(-\gamma^{\prime}(\alpha), 1)$.
Differentiating the homogeneity relation
\begin{align*}
\rho(t(\alpha, \gamma(\alpha)))=t\rho(\alpha, \gamma(\alpha))
\end{align*}
with respect to $t$ and setting $t=1$ yields
\begin{align}\label{dot1}
(\nabla\rho(\alpha, \gamma(\alpha)))\cdot (\alpha, \gamma(\alpha))=1.
\end{align}
It follows that 
\begin{align}\label{gradlength}
|\nabla\rho(\alpha, \gamma(\alpha))|=\frac{|(-\gamma^{\prime}(\alpha), 1)|}{|\left<(\alpha, \gamma(\alpha)); (-\gamma^{\prime}(\alpha), 1)\right>|}.
\end{align}
Note that (\ref{ballbound}) implies that 
\begin{align}\label{innprod}
|\left<(\alpha, \gamma(\alpha)); (-\gamma^{\prime}(\alpha), 1)\right>|\ge 2^{-4M}.
\end{align}
Together (\ref{gradperp}) and (\ref{gradlength}) imply that
\begin{align}\label{gradequals}
\nabla\rho(\alpha, \gamma(\alpha))=\frac{(\gamma^{\prime}(\alpha), -1)}{\alpha\gamma^{\prime}(\alpha)-\gamma(\alpha)}.
\end{align}
Note that (\ref{ballbound}) and (\ref{gradequals}) implies that 
\begin{align}\label{gradbound}
|\nabla\rho(\alpha, \gamma(\alpha))|\le 2^{5M}.
\end{align}

\section{$L^1$ kernel estimates}\label{l1kerest}
The goal of this section is to prove Theorem \ref{symbol1}. Let $\Omega$, $\rho$ and $a$ be as in the statement of Theorem \ref{symbol1}. Motivated by \cite{sss}, we would like to perform a dyadic decomposition of the multiplier $a(\rho(\xi))e^{i\rho(\xi)}$. Let $\{\theta_k\}_{k\ge0}$ be a smooth dyadic partition of unity of $\mathbb{R}$, so that $\theta_0$ is supported in $[-2^{-3M}, 2^{-3M}]$ and $\theta_k$ is supported in an annulus $|\xi|\approx 2^{k-3M}$ for $k>0$. We write
\begin{align*}
K(x):=\mathcal{F}^{-1}[a(\rho(\cdot))e^{i\rho(\cdot)}](x)=\sum_{k\ge 0}K_k(x),
\end{align*}
where
\begin{align}\label{kkdef}
K_k(x):=\mathcal{F}^{-1}[a(\rho(\cdot))e^{i\rho(\cdot)}\theta_k(\rho(\cdot))](x).
\end{align}
It is easy to see that Theorem \ref{symbol1} is a consequence of the following.

\begin{proposition}\label{mainprop1}
Let $\Omega$, $\rho$ and $a$ be as in the statement of Theorem \ref{symbol1}. Define $K_k$ as in (\ref{kkdef}). Then for $k>0$ and for every $\epsilon>0$,
\begin{align*}
\Norm{K_k}_{L^1(\mathbb{R}^2)}\lesssim_{\epsilon}2^{-k\epsilon/2}.
\end{align*}
\end{proposition}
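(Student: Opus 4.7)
The approach is a dyadic-angular decomposition in the spirit of Seeger--Ziesler and Sogge--Stein--Sj\"olin. First, by Lemma \ref{approxlemma}, replace $\rho$ by the Minkowski functional $\rho_n$ of a smooth approximant $\Omega_n$, with all estimates uniform in $n$, so the conclusion follows by passing to the limit. By Lemma \ref{elemlemma} and rotational symmetry, it suffices to consider the portion of $\partial\Omega$ parametrized by $t\mapsto(t,\gamma(t))$, $t\in[-1,1]$. Apply Lemma \ref{endpointlemma} with $\delta=2^{-k}$ to produce the refined partition $\tilde{\mathfrak{A}}(2^{-k})=\{b_0,\dots,b_{\tilde Q}\}$ with intervals $I_j=[b_j,b_{j+1}]$, and choose a smooth angular partition of unity $\{\chi_j\}_{j=0}^{\tilde Q-1}$ on $\mathbb{R}^2\setminus\{0\}$ (zero-homogeneous, with $\chi_j$ supported in a slight angular thickening of the sector through $\{(t,\gamma(t)):t\in I_j\}$). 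Then write $K_k=\sum_j K_{k,j}$ with
\[
K_{k,j}(x)=\int e^{i(\xi\cdot x+\rho(\xi))}\,a(\rho(\xi))\,\theta_k(\rho(\xi))\,\chi_j(\xi)\,d\xi.
\]

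\textbf{Per-piece estimate.} On $\supp(\chi_j\theta_k(\rho(\cdot)))$ we have $|\xi|\approx 2^k$, so the symbol hypothesis gives $|a(\rho(\xi))|\lesssim 2^{-k(\kappa_\Omega+\epsilon)}$. I work in adapted coordinates $(\sigma,\tau)$ at a representative point $\xi_j^*\in 2^k\partial\Omega\cap\supp\chi_j$, with $\sigma$ along the tangent to $\partial\Omega$ and $\tau$ along the outward normal. The flatness bound (\ref{slope}) says that after dilation by $2^k$ the sub-arc corresponding to $I_j$ deviates from its tangent line by $O(1)$; hence $\supp(\chi_j\theta_k(\rho(\cdot)))$ lies in a rectangle of dimensions $\approx 2^k|I_j|$ (tangential) by $O(1)$ (normal). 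Decompose $\rho(\xi)=\rho(\xi_j^*)+\nabla\rho(\xi_j^*)\cdot(\xi-\xi_j^*)+R_j(\xi)$: the linear term translates $x$ and preserves the $L^1$ norm, while $R_j$ encodes the curvature, whose tangential derivatives are controlled via (\ref{slope}) together with the convexity of $\gamma$ and the gradient estimates (\ref{gradlength}), (\ref{innprod}), (\ref{gradbound}). Non-stationary phase integration by parts in $\sigma$, combined with direct integration over $\tau$ on an interval of length $O(1)$, yields
\[
\|K_{k,j}\|_{L^1(\mathbb{R}^2)}\lesssim k^{N'}\cdot 2^{-k(\kappa_\Omega+\epsilon)}
\]
for some fixed power $N'$ (the $k^{N'}$ loss coming from the logarithmic overhead in Lemma \ref{endpointlemma}).

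\textbf{Summation and main obstacle.} By Lemma \ref{endpointlemma}, item (\ref{count}), and the definition of $\kappa_\Omega$, for any $\epsilon'>0$ and all sufficiently large $k$,
\[
\tilde Q \,\lesssim\, k^2 N(\Omega,2^{-k}) \,\lesssim\, k^2\,2^{k(\kappa_\Omega+\epsilon')}.
\]
Choosing $\epsilon'=\epsilon/4$ and summing the per-piece estimate,
\[
\|K_k\|_{L^1}\,\lesssim\,\sum_j\|K_{k,j}\|_{L^1}\,\lesssim\, k^{N'+2}\,2^{k(\kappa_\Omega+\epsilon/4)}\cdot 2^{-k(\kappa_\Omega+\epsilon)}\,\lesssim_\epsilon\, 2^{-k\epsilon/2},
\]
absorbing the polynomial-in-$k$ losses into the exponentially small factor. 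The main obstacle is the per-piece $L^1$ estimate: showing that each oscillatory integral over a ``flat'' frequency rectangle contributes only $O(1)$ (up to polynomial in $k$) once the symbol decay is extracted. The delicate point is that the second-order remainder $R_j$ must be small enough on the rectangle---precisely guaranteed by (\ref{slope})---for tangential integration by parts to yield rapid decay off the expected dual rectangle in $x$-space. The smoothness of $\rho_n$ is essential to justify these derivative manipulations, and all constants must be independent of the regularization parameter.
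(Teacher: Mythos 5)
Your overall scheme (reduction to smooth approximants via Lemma \ref{approxlemma}, angular decomposition adapted to $\tilde{\mathfrak{A}}(2^{-k})$, a per-piece bound of size $2^{-k(\kappa_{\Omega}+\epsilon)}$, and summation over $\lesssim k^2 N(\Omega,2^{-k})$ pieces) is the same as the paper's, but the per-piece estimate, which is the heart of the proof, is not correct as sketched. First, the geometric claim is wrong: $\supp\big(\chi_j\,\theta_k(\rho(\cdot))\big)$ is a sector of the annulus $\{\rho\approx 2^k\}$, of tangential extent $\approx 2^k|I_j|$ but of \emph{radial} thickness $\approx 2^k$, not $O(1)$; the bound (\ref{slope}) only says the arc $2^k\{(t,\gamma(t)):t\in I_j\}$ stays within $O(1)$ of its tangent line, not that the multiplier is supported there. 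Consequently ``direct integration over $\tau$ on an interval of length $O(1)$'' is not available: the $2^{-k}$-scale localization of the kernel in the normal dual variable is not a support property but must be extracted from the oscillation of $e^{is(\alpha x_1+\gamma(\alpha)x_2+1)}$ by repeated integration by parts in the radial variable $s$ over $s\approx 2^k$, exactly as in the paper's treatment of $\tilde K_{k,j,n}$. Without this step your per-piece kernel has no decay in the normal dual direction and the claimed $L^1$ bound fails outright.

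Second, even after repairing that, the claim $\Norm{K_{k,j}}_{L^1(\mathbb{R}^2)}\lesssim k^{N'}2^{-k(\kappa_{\Omega}+\epsilon)}$ over \emph{all} of $\mathbb{R}^2$ does not follow from ``tangential integration by parts yields rapid decay off the dual rectangle.'' Since the constants may depend only on the $C^1$ data of $\partial\Omega$ (uniformly in the approximation parameter), you can integrate by parts in $\alpha$ only once: the boundary term produces $\gamma''$ only through its total variation $\int_{I_j^{\ast}}|\gamma''|$, controlled by (\ref{slope}), whereas a second integration by parts brings in $|\gamma''|^2$ and $\gamma'''$ pointwise, which are not uniformly controlled (circumventing this is precisely the point of the smoothing construction $\gamma_k$ in Lemma \ref{approxlemma2}, used only for the endpoint Theorem \ref{symbol2}). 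A single tangential integration by parts gives decay $|u_1|^{-1}$, which is not integrable at infinity; it yields only a logarithm of the truncation radius in $x$. This is why the paper runs the $I_j$-decomposition only in the region $|x|\lesssim 2^{6M}$ (where summing the dyadic shells $n<k$ costs a factor $k$), treats $2^{6M}\lesssim|x|\lesssim 2^{3k+6M}$ by the separate $\eta$-split into $\tilde K_{k,1},\tilde K_{k,2}$ without the $I_j$ decomposition, and handles $|x|\gtrsim 2^{3k+6M}$ by applying Lemma \ref{szlemma} to the full kernel. Your proposal has no mechanism at all for the far region $|x|\gtrsim 2^{Ck}$, and the ``rapid decay'' it invokes is exactly what the $C^1$-only constraint forbids; so the summation step, while arithmetically fine, rests on an unproved per-piece bound.
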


In order to obtain kernel estimates using techniques similar to those in \cite{sz}, we want to work with domains with smooth boundaries, rather than arbitrary convex domains for which the boundary need only be Lipschitz. Thus we will use Lemma \ref{approxlemma} to reduce Proposition \ref{mainprop1} to the following.
\begin{proposition}\label{mainprop2}
Let $\Omega$, $\rho$ and $a$ be as in the statement of Theorem \ref{symbol1}. Fix an integer $k>0$. Let $\tilde{\Omega}$ be a convex domain with smooth boundary such that
\begin{align*}
\{\xi:\,|\xi|\le 4\}\subset\tilde{\Omega}\subset\overline{\tilde{\Omega}}\subset\{\xi:\,|\xi|<2^{M+1}\},
\end{align*}
and such that 
\begin{align}\label{derp}
N(\tilde{\Omega}, 2^{-k})\le N(\Omega, 2^{-k-1}).
\end{align} 
Let $\tilde{\rho}$ be the Minkowski functional of $\tilde{\Omega}$. Define 
\begin{align*}
\tilde{K}_k(x):=\mathcal{F}^{-1}[a(\tilde{\rho}(\cdot))e^{i\tilde{\rho}(\cdot)}\theta_k(\tilde{\rho}(\cdot))](x).
\end{align*}
Then for every $\epsilon>0$,
\begin{align*}
\Norm{\tilde{K}_k}_{L^1(\mathbb{R}^2)}\lesssim_{\epsilon}2^{-k\epsilon/2}.
\end{align*}
\end{proposition}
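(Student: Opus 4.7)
The plan is to adapt the Seeger--Ziesler decomposition of $\partial\tilde\Omega$ to scale $2^{-k}$, together with an oscillatory integral estimate on each piece. First I would apply Lemma~\ref{endpointlemma} with $\delta = 2^{-k}$ to the smooth domain $\tilde\Omega$, obtaining a refined partition $\tilde{\mathfrak A}(2^{-k}) = \{b_0,\dots,b_{\tilde Q}\}$ with $\tilde Q \lesssim k^2 N(\tilde\Omega,2^{-k}) \le k^2 N(\Omega,2^{-k-1})$ by (\ref{derp}). I would construct a smooth partition of unity $\{\chi_j\}$ on $\mathbb R^2\setminus\{0\}$, with $\chi_j$ homogeneous of degree $0$ and supported in the angular sector through $\{(\alpha,\gamma(\alpha)):\alpha\in[b_{j-1},b_{j+1}]\}$ (with obvious modifications for the part of $\partial\tilde\Omega$ not captured by the graph of $\gamma$, handled by a rotation argument and Lemma~\ref{covlemma}). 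Then I would write
\begin{equation*}
\tilde K_k = \sum_{j=1}^{\tilde Q} \tilde K_k^j,\qquad \tilde K_k^j = \mathcal F^{-1}\bigl[a(\tilde\rho)\,e^{i\tilde\rho}\,\theta_k(\tilde\rho)\,\chi_j\bigr].
\end{equation*}

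Next I would prove the single-piece estimate
\begin{equation*}
\|\tilde K_k^j\|_{L^1(\mathbb R^2)} \lesssim 2^{-k(\kappa_\Omega+\epsilon)},
\end{equation*}
uniformly in $j$. The multiplier for $\tilde K_k^j$ is supported in a curved slab $S_j$ of frequency dimensions $\approx 2^k|I_j|$ (tangential to $\partial\tilde\Omega$ at $(b_j,\gamma(b_j))$) by $\approx 2^k$ (normal, determined by the width of $\theta_k$ together with (\ref{innprod})), and has amplitude $\lesssim 2^{-k(\kappa_\Omega+\epsilon)}$ because $a$ is a symbol of the prescribed order and $\tilde\rho \approx 2^k$ on $S_j$. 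Using the smoothness of $\tilde\Omega$, I would introduce rotated tangent/normal coordinates centered at $(b_j,\gamma(b_j))$; the flatness estimate (\ref{slope}) ensures that in these coordinates the normal component of $\nabla\tilde\rho$ varies by at most $O(2^{-k}/|I_j|)$ over $S_j$, while (\ref{gradequals})--(\ref{gradbound}) give pointwise control of $\nabla\tilde\rho$. Repeated integration by parts in both tangential and normal directions (with derivatives of the amplitude controlled by rescaled bump estimates, arguing as in \cite{sz} and \cite{sss}) confines $\tilde K_k^j$, up to Schwartz tails, to a dual plate $P_j$ of physical dimensions $(2^k|I_j|)^{-1}\times 2^{-k}$. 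Since $\|\tilde K_k^j\|_\infty \lesssim |S_j|\cdot 2^{-k(\kappa_\Omega+\epsilon)} \lesssim 2^{2k}|I_j|\cdot 2^{-k(\kappa_\Omega+\epsilon)}$ and $|P_j|\lesssim 2^{-2k}/|I_j|$, multiplying gives the claimed single-piece bound.

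Finally I would sum over $j$. By definition of $\kappa_\Omega$ in (\ref{kappaomegadef}), for every $\eta > 0$ and all sufficiently large $k$ we have $N(\Omega, 2^{-k-1})\lesssim_\eta 2^{k(\kappa_\Omega + \eta)}$. Choosing $\eta = \epsilon/4$ and combining with (\ref{count}),
\begin{equation*}
\|\tilde K_k\|_{L^1} \le \sum_j \|\tilde K_k^j\|_{L^1} \lesssim k^2\,N(\Omega,2^{-k-1})\cdot 2^{-k(\kappa_\Omega+\epsilon)} \lesssim_\epsilon k^2\,2^{-3k\epsilon/4} \lesssim 2^{-k\epsilon/2},
\end{equation*}
which is the desired bound.

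The main obstacle is the single-piece estimate: the domain $\tilde\Omega$ is only a smooth approximation to a possibly very irregular $\Omega$, so the \emph{quantitative} smoothness of $\partial\tilde\Omega$ (second derivatives of $\tilde\gamma$ on the interval $I_j$) cannot be used freely — all kernel estimates must be expressed in terms of the flatness data (\ref{slope}) and (\ref{coomp}), which bound only first-order variation of the slope. Thus the integration by parts must exploit the fact that the phase $\tilde\rho$ is \emph{convex} and homogeneous, using differences of $\tilde\gamma'$ (rather than second derivatives) to control the relevant Jacobian factors. The pointwise control of $\nabla\tilde\rho$ in (\ref{gradequals})--(\ref{gradbound}), together with (\ref{slope}) inside each $I_j$, is exactly what makes this possible, but carrying out the IBP carefully enough that the implicit constants are independent of the smoothing scale is the delicate step.
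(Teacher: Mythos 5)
Your overall scheme --- the Seeger--Ziesler decomposition of $\partial\tilde\Omega$ at scale $2^{-k}$, a per-sector bound of order $2^{-k(\kappa_\Omega+\epsilon)}$, and summation over sectors via (\ref{count}), (\ref{derp}) and the definition of $\kappa_\Omega$ --- is the same as the paper's. The gap is in the one step that carries all the difficulty: the claim that ``repeated integration by parts in both tangential and normal directions'' confines $\tilde K_k^j$, up to Schwartz tails, to the dual plate $P_j$. Repeated integration by parts in the tangential variable is precisely what is not available here: any tangential integration by parts beyond the first produces $\gamma'''$ and $(\gamma'')^2$ factors, i.e.\ it requires pointwise control of second and third derivatives of the boundary parametrization, which is not provided by the flatness data (\ref{slope}), (\ref{coomp}) and would make the constants depend on the smoothing of $\tilde\Omega$ (this is exactly why the paper later has to construct the auxiliary curve $\gamma_k$ of Lemma \ref{approxlemma2} for the endpoint result). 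With the admissible data one may integrate by parts only once in $\alpha$; the resulting $|\gamma''|$ enters linearly and is controlled only after integration in $\alpha$ via (\ref{slope}), and the decay obtained in the tangential coordinate $u_1$ of (\ref{invcoord}) is merely $O(1/|u_1|)$ --- far from Schwartz tails, and not even integrable at large distances. You acknowledge this constraint in your closing paragraph, but the mechanism you invoke contradicts it, so the uniform single-piece bound $\|\tilde K_k^j\|_{L^1(\mathbb{R}^2)}\lesssim 2^{-k(\kappa_\Omega+\epsilon)}$ does not follow from the sketch.

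Two concrete consequences. First, with a single tangential integration by parts one must decompose dyadically in the tangential distance ($|u_1|\approx 2^{n-k}|I_j|^{-1}$), and each dyadic piece contributes $\approx 2^{-k(\kappa_\Omega+\epsilon)}$ with no gain in $n$; restricting to $|x|\lesssim 2^{Ck}$ this costs a factor $k$ per sector, harmless only because of the $\epsilon$-room (this is how the paper argues, ending with $k\,2^{-k(\kappa_\Omega+\epsilon)}$ per sector rather than your claimed $2^{-k(\kappa_\Omega+\epsilon)}$). Second, the region $|x|\gtrsim 2^{Ck}$ is not covered by your argument at all: there the sum over tangential dyadic scales diverges, and a separate radial-variable argument is needed --- in the paper, Lemma \ref{szlemma} applied to $h(s)=e^{is}a(s)\theta(2^{-k}s)$, whose crude constant $2^{k(2-\kappa_\Omega-\epsilon)}$ is beaten by the decay over dyadic annuli $|x|\approx 2^l$ with $l\ge 10k$. (Relatedly, the paper performs the near/far splitting relative to the singular set $\{-\nabla\tilde\rho\}$ before the angular decomposition, and only the near region is treated sector by sector.) Your final summation over $j$ is correct and would absorb an extra factor of $k$, so the plan is repairable along the paper's lines, but as written the central single-piece estimate is unsupported.
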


\begin{proof}[Proof that Proposition \ref{mainprop2} implies Proposition \ref{mainprop1}]
Let $\{\rho_n\}$ be a sequence of Minkowski functionals approximating $\rho$ as in Lemma \ref{approxlemma}, and for each $n$ set
\begin{align*}
K_{k, n}(x):=\mathcal{F}[a(\rho_n(\cdot))e^{i\rho_n(\cdot)}\theta_k(\rho_n(\cdot))](x).
\end{align*}
Since $\rho_n\to\rho$ uniformly on compact sets, $K_{k, n}(x)\to K_k(x)$ pointwise almost everywhere, and so Fatou's lemma yields
\begin{align*}
\Norm{K_k}_{L^1(\mathbb{R}^2)}\le\liminf_{n\to\infty}\Norm{K_{k, n}}_{L^1(\mathbb{R}^2)}\lesssim_{\epsilon}2^{-k\epsilon/2},
\end{align*}
where in the second to last step we have applied Proposition \ref{mainprop2}.
\end{proof}

Now that we have reduced Proposition \ref{mainprop1} to Proposition \ref{mainprop2} we may now work with distance functions $\tilde{\rho}$ that are smooth away from the origin, and so we may express the kernels in homogeneous coordinates (polar coordinates associated to $\tilde{\Omega}$) and integrate by parts. This  is the general approach used in \cite{sz} to handle the generalized Bochner-Riesz multipliers. We emphasize that we must take care to ensure that our estimates ultimately depend only on the $C^1$ norm of $\partial\tilde{\Omega}$, which is bounded by $2^M$ (and not, for instance, the $C^2$ norm). That this is necessary can be seen in the statements of Theorem \ref{symbol1}, Proposition \ref{mainprop1} and Proposition \ref{mainprop2}, where none of the constants in the estimates to be proven depend on the $C^2$ norm of $\partial\tilde{\Omega}$. However, if we recall the remarks made about notation in the introduction, each of the constants in these estimates implicitly depend on $M$.

\begin{proof}[Proof of Proposition \ref{mainprop2}]
We first note that after employing an appropriate angular partition of unity and using rotational invariance it suffices to consider $\tilde{K}_k$ multiplied by a smooth angular cutoff on the Fourier side. Thus in what follows we will instead let
\begin{align}\label{kktwiddledef}
\tilde{K}_k(x):=\mathcal{F}^{-1}[a(\tilde{\rho}(\cdot))e^{i\tilde{\rho}(\cdot)}\theta_k(\tilde{\rho}(\cdot))\chi(\cdot)](x)
\end{align}
where $\chi(\xi)=\chi_1(\frac{\xi_1}{|\xi|})\chi_2(\tilde{\rho}(\xi))$ for smooth functions $\chi_1, \chi_2:\mathbb{R}\to\mathbb{R}$ so that $[-2^{-2M-1}, 2^{-2M-1}]\prec\chi_1\prec [-2^{-2M}, 2^{-2M}]$, and so that $\chi_2$ is identically $1$ on the support of $a$ and $0$ in a sufficiently small ball centered at the origin. Let $\gamma$ be a parametrization of $\partial\tilde{\Omega}\cap\{x:\,-1\le x_1\le 1,\, x_2\le 0\}$ as in Lemma \ref{elemlemma}. We introduce homogeneous coordinates
\begin{align}\label{homog}
(s, \alpha)\mapsto\xi(s, \alpha)=(s\alpha, s\gamma(\alpha)).
\end{align}
In this coordinate system, $\{(s, \alpha):\,s=1\}\subset\{\xi:\,\rho(\xi)=1\}$. The map (\ref{homog}) has Jacobian
\begin{align*}
\det\bigg(\frac{\partial\xi}{\partial(s, \alpha)}\bigg)=s(\alpha\gamma^{\prime}(\alpha)-\gamma(\alpha)).
\end{align*}
Note that there is a smooth function $\tilde{\chi}_1:\mathbb{R}\to\mathbb{R}$ so that $\chi_1(\frac{\xi_1}{|\xi|})$ in homogeneous coordinates is given by $\tilde{\chi}_1(\alpha)$. Using (\ref{homog}), we thus have
\begin{multline}\label{k1}
\tilde{K}_{k}(x)=\int_{\mathbb{R}^2}e^{i\tilde{\rho}(\xi)}a(\tilde{\rho}(\xi))\theta_k(\tilde{\rho}(\xi))\chi(\xi)e^{ix\cdot\xi}\,d\xi
\\
=\int_0^{\infty}\int e^{is(\alpha x_1+\gamma(\alpha)x_2+1)}a(s)\theta_k(s){\tilde{\chi}_1}(\alpha)s(\alpha\gamma^{\prime}(\alpha)-\gamma(\alpha))\,d\alpha\,ds.
\end{multline}

\subsection*{Kernel estimates far away from the singular set}
Considering the phase $ix\cdot\xi+i\tilde{\rho}(\xi)$ as a function of the variable $\xi$, we see that its gradient vanishes on the singular set $x\in\{-\nabla\tilde{\rho}(\xi):\,\xi\in\mathbb{R}^2\}$. Since $|\nabla\tilde{\rho}|\le 2^{5M}$ as noted in (\ref{gradbound}), we choose to separately estimate the $L^1$ norm of $\tilde{K}_k$ away from a sufficiently large ball (say, of radius $2^{6M}$) centered at the origin. We would expect that after localization on the Fourier side, the multiplier $e^{i\tilde{\rho}(\xi)}$ acts like translation by $\nabla\tilde{\rho}(\xi_0)$ for some $\xi_0$, and hence we might expect any pointwise kernel estimates we obtain off of the ball of radius $2^{6M}$ centered at the origin to be robust under perturbations by $\nabla\tilde{\rho}(\xi_0)$. Thus we will not further decompose the multiplier $\mathcal{F}[\tilde{K}_k]$ when estimating the $L^1$ norm of $\tilde{K}_k$ off of this ball.
\newline
\indent
Throughout the rest of this paper, $\phi_0:\mathbb{R}\to\mathbb{R}$ will be a smooth function satisfying $[-1/2, 1/2]\prec\phi\prec[-1, 1]$. We set $c=c(\Omega, \epsilon)=\frac{1}{2}\max(\kappa_{\Omega}, \epsilon)$. We will show that
\begin{align}\label{ballest}
\int|\tilde{K}_k(x)(1-\phi_0(2^{-6M}|x|))|\,dx\lesssim 2^{-kc}.
\end{align}
To do this we will first prove
\begin{align}\label{ballest1}
\int|\tilde{K}_k(x)(\phi_0(2^{-3k-6M}|x|)-\phi_0(2^{-6M}|x|))|\,dx\lesssim 2^{-kc}
\end{align}
and then prove
\begin{align}\label{ballest2}
\int|\tilde{K}_k(x)(1-\phi_0(2^{-3k-6M}|x|))|\,dx\lesssim 2^{-k}.
\end{align}
Let $\eta:\mathbb{R}\to\mathbb{R}$ be a smooth function satisfying $[-2^{-3M-1}, 2^{-3M-1}]\prec\eta\prec[-2^{-3M}, 2^{-3M}]$. We decompose
\begin{align*}
\tilde{K}_k(x)(\phi_0(2^{-3k-6M}|x|)-\phi_0(2^{-6M}|x|))=\tilde{K}_{k, 1}(x)+\tilde{K}_{k, 2}(x),
\end{align*}
where 
\begin{multline}\label{kk1}
\tilde{K}_{k, 1}(x)=(\phi_0(2^{-3k-6M}|x|)-\phi_0(2^{-6M}|x|))
\\
\times\int_0^{\infty}\int e^{is(\alpha x_1+\gamma(\alpha)x_2+1)}a(s)\eta\bigg(\frac{x_1+x_2\gamma^{\prime}(\alpha)}{|x|}\bigg)\\
\times\theta_k(s){\tilde{\chi}_1}(\alpha)s(\alpha\gamma^{\prime}(\alpha)-\gamma(\alpha))\,d\alpha\,ds
\end{multline}
and
\begin{multline}\label{kk2}
\tilde{K}_{k, 2}(x)=(\phi_0(2^{-3k-6M}|x|)-\phi_0(2^{-6M}|x|))
\\
\times\int_0^{\infty}\int e^{is(\alpha x_1+\gamma(\alpha)x_2+1)}a(s)\bigg(1-\eta\bigg(\frac{x_1+x_2\gamma^{\prime}(\alpha)}{|x|}\bigg)\bigg)
\\
\times\theta_k(s){\tilde{\chi}_1}(\alpha)s(\alpha\gamma^{\prime}(\alpha)-\gamma(\alpha))\,d\alpha\,ds.
\end{multline}
\begin{figure}
\begin{tikzpicture}[scale=0.7]
\filldraw[thick, fill=blue!20!white, draw= black, ] plot [smooth cycle] coordinates {(-3, 2) (-2, -2) (1, -3) (4, 1) };

\draw[thick, ->] (0, 0)--(5, 0) node[anchor=north west] {$\xi_1$};
\draw[thick, ->] (0, 0)--(0, 4) node[anchor=south east] {$\xi_2$};
\draw[thick, dashed, ->] (0, 0)--(1.5, -4.5);
\draw[thick, dashed, ->] (1, -3)--(2.5, -2.1);
\filldraw
(2.7, -4.5) circle (2pt) node[anchor= north west] {\tiny $\nabla\rho(\alpha, \gamma(\alpha))$};
\draw[thick, ->] (2.7, -4.5)--(3.2, -6) node[anchor=south east] {\tiny $u_2$} ;
\draw[thick, ->] (2.7, -4.5)--(4.2, -3.6) node[anchor=south east] {\tiny $u_1$} ;
\filldraw
(1, -3) circle (2pt) node[anchor=south east] {\tiny $(\alpha, \gamma(\alpha))$};
\filldraw 
(0,0) circle (2pt) node[anchor=north east] {\tiny $(0, 0)$};
\node at (-1, 1) {\LARGE $\tilde{\Omega}$};
\end{tikzpicture}
\caption{The coordinate system from (\ref{invcoord}).}\label{fig2}
\end{figure}
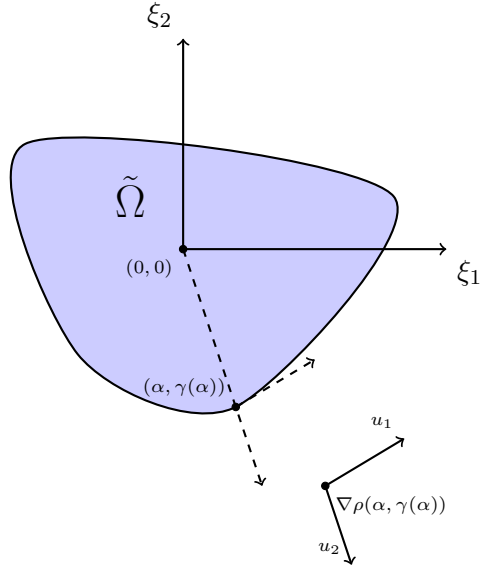

Note that the coordinate system given by the change of coordinates
\begin{align}\label{invcoord}
(x_1, x_2)\mapsto(u_1, u_2):=(x_1+x_2\gamma^{\prime}(\alpha), 1+\alpha x_1+\gamma(\alpha)x_2),
\end{align}
has Jacobian with absolute value $|\alpha\gamma^{\prime}(\alpha)-\gamma(\alpha)|\approx_M 1$. It is also helpful to note that
\begin{align*}
x_1+x_2\gamma^{\prime}(\alpha)=[(x_1, x_2)-\nabla\rho(\alpha,\gamma(\alpha))]\cdot(1, \gamma^{\prime}(\alpha))
\end{align*}
and
\begin{align*}
1+\alpha x_1+\gamma(\alpha)x_2=[(x_1, x_2)-\nabla\rho(\alpha,\gamma(\alpha))]\cdot(\alpha, \gamma(\alpha)),
\end{align*}
and hence our coordinate system is centered at $\nabla\rho(\alpha,\gamma(\alpha))$ with one coordinate direction parallel to $(\alpha, \gamma(\alpha))$ and the other coordinate direction parallel to the tangent vector to $\partial\Omega$ at $(\alpha, \gamma(\alpha))$; see Figure \ref{fig2}. Thus by our choice of the angular cutoff $\chi$ and our choice of $\eta$, it follows that on the support of 
\begin{align*}
(\phi_0(2^{-3k-6M}|x|)-\phi_0(2^{-6M}|x|))\eta\bigg(\frac{x_1+x_2\gamma^{\prime}(\alpha)}{|x|}\bigg)
\end{align*} we have $|x|\approx|1+\alpha x_1+\gamma(\alpha)x_2|$. Similarly, on the support of 
\begin{align*}
(\phi_0(2^{-3k-6M}|x|)-\phi_0(2^{-6M}|x|))\bigg(1-\eta\bigg(\frac{x_1+x_2\gamma^{\prime}(\alpha)}{|x|}\bigg)\bigg)
\end{align*}
we have $|x|\approx |x_1+x_2\gamma^{\prime}(\alpha)|$.
\newline
\indent
Integrating (\ref{kk1}) by parts three times with respect to $s$ and using the above observations yields
\begin{multline}\label{kk12}
\int|\tilde{K}_{k, 1}(x)|\,dx\lesssim2^{-k(\kappa_{\Omega}+\epsilon)}\int\int\tilde{\chi}_1(\alpha)\frac{2^{2k}}{(1+2^k|1+\alpha x_1+\gamma(\alpha)x_2|)^3}\,d\alpha\,dx
\\
\lesssim2^{-k(\kappa_{\Omega}+\epsilon)}\int\frac{2^{2k}}{(1+2^k|x|)^3}\,dx\lesssim 2^{-kc}.
\end{multline}
Integrating by parts (\ref{kk2}) once with respect to $\alpha$, we have
\begin{multline}\label{kk22}
\int|\tilde{K}_{k, 2}(x)|\,dx=(\phi_0(2^{-3k-6M}|x|)-\phi_0(2^{-6M}|x|))
\\\times\int_0^{\infty}\int\partial_{\alpha}g_k(x, \alpha)e^{is(\alpha x_1+\gamma(\alpha)x_2+1)}a(s)\theta_k(s)\,ds\,d\alpha,
\end{multline}
where
\begin{align*}
g_k(x, \alpha)=\frac{\tilde{\chi}_1(\alpha)(\alpha\gamma^{\prime}(\alpha)-\gamma(\alpha))(1-\eta(\frac{x_1+x_2\gamma^{\prime}(\alpha)}{|x|}))}{x_1+x_2\gamma^{\prime}(\alpha)}.
\end{align*}
Integrating by parts (\ref{kk22}) twice with respect to $s$, we have
\begin{multline*}
|\tilde{K}_{k, 2}(x)|\lesssim 2^{-k(\kappa_{\Omega}+\epsilon)}(\phi_0(2^{-3k-6M}|x|)-\phi_0(2^{-6M}|x|))
\\
\times\int|\partial_{\alpha}g_k(x, \alpha)|\frac{2^k}{(1+2^k|\alpha x_1+\gamma(\alpha)x_2+1|)^2}\,d\alpha.
\end{multline*}
Note that on the support of $g_k(x, \alpha)$,
\begin{align}\label{gkest}
|\partial_{\alpha}g_k(x, \alpha)|\lesssim\frac{|\gamma^{\prime\prime}(\alpha)|+1}{|x_1+x_2\gamma^{\prime}(\alpha)|}.
\end{align}
We apply the change of coordinates (\ref{invcoord}). Using (\ref{gkest}), this yields
\begin{multline*}
\int|\tilde{K}_{k, 2}(x)|\,dx
\\
\lesssim 2^{-k(\kappa_{\Omega}+\epsilon)}\int\bigg(\int_{B_{2^{3k+10M}}(0)\setminus B_1(0)}\frac{1}{|u_1|}\frac{2^k}{(1+2^k|u_2|)^2}\,du\bigg)
\\
\times(|\gamma^{\prime\prime}(\alpha)|+1)\tilde{\chi}_1(\alpha)d\alpha
\\
\lesssim 2^{-k(\kappa_{\Omega}+\epsilon)}\int_{B_{2^{3k+10M}}(0)\setminus B_1(0)}\frac{1}{|u_1|}\frac{2^k}{(1+2^k|u_2|)^2}\,du\lesssim k2^{-k(\kappa_{\Omega}+\epsilon)}\lesssim 2^{-kc},
\end{multline*}
which together with (\ref{kk12}) proves (\ref{ballest1}).
\newline
\indent
Now we prove (\ref{ballest2}). We will need the following lemma from \cite{sz}, which we state without proof.
\begin{customlemma}{E}[Seeger and Ziesler, \cite{sz}]\label{szlemma}
Let $h$ be an absolutely continuous function on $[0, \infty)$ and suppose that $\lim_{t\to\infty}h(t)=0$. Suppose that $s\mapsto sh^{\prime}(s)$ defines an $L^1$ function on $[0, \infty)$ and let
\begin{align*}
F(\tau)=\int_0^{\infty}h^{\prime}(s)e^{is\tau}\,ds.
\end{align*}
Suppose that $\mu>0$ and that
\begin{align*}
|F(\tau)|+|F^{\prime}(\tau)|\le B(1+|\tau|)^{-\mu}.
\end{align*}
Let $B(0, R)$ be the ball with radius $R$ and center $0$, and define $\mathcal{A}_l=B(0, 2^{l})\setminus B(0, 2^{l-1})$, for $l>0$, and $\mathcal{A}_0=B(0, 1)$. Then
\begin{align*}
\int_{\mathcal{A}_l}|\mathcal{F}^{-1}[h\circ\rho](x)|\,dx\lesssim_M B[2^{-l(\mu-1)}+l2^{-l}].
\end{align*}
\end{customlemma}
We will apply the lemma with $h(s)=e^{is}a(s)\theta(2^{-k}s)$. Then for every $N>0$,
\begin{align*}
|F(\tau)|+|F^{\prime}(\tau)|\le 2^{k(2-\kappa_{\Omega}-\epsilon)}(1+|\tau|)^{-N},
\end{align*}
and so we conclude that
\begin{align*}
\int_{\mathcal{A}_l}\mathcal{F}^{-1}[h\circ\rho](x)|\,dx\lesssim l2^{k(2-\kappa_{\Omega}-\epsilon)-l}.
\end{align*}
Summing over $l\ge 10k$, we obtain (\ref{ballest2}) and therefore (\ref{ballest}).
\begin{remark}\label{rem}
We note that our proof of (\ref{ballest}) is also valid when $\epsilon=0$ and $\kappa_{\Omega}>0$, which implies $c=\kappa_{\Omega}/2$. We will use this later when we prove an $H^1\to L^1$ endpoint estimate.
\end{remark}
\subsection*{Kernel estimates near the singular set}
It remains to estimate 
\begin{align*}
\int |\tilde{K}_k(x)\phi_0(2^{-6M}|x|)|\,dx.
\end{align*} 
Here we will further decompose the mutiplier $\mathcal{F}[\tilde{K}_k]$ using the decomposition of $\partial\tilde{\Omega}$ from Section \ref{prelim}. Let $\mathfrak{A}(2^{-k})$ be the increasing sequence of numbers associated to $\partial\tilde{\Omega}$ as defined in Section \ref{prelim} with $\delta=2^{-k}$, and let $\tilde{\mathfrak{A}}(2^{-k})$ be the refinement of $\mathfrak{A}(2^{-k})$ as given by Lemma \ref{endpointlemma} and let $\{I_j\}$ be the corresponding partition of $[-1, 1]$ into subintervals. We emphasize that although our collection of intervals $\{I_j\}$ is indexed only by $j$, it implicitly depends on $k$ as well. Now for each such interval $I_j$, let $I_j^{\ast}$ be its $25/24$-dilate (dilated from the center of $I_j$), and let $\{\beta_{I_j}\}$ be a smooth partition of unity subordinate to $\{I_j^{\ast}\}$ such that for each $i\ge 0$,
\begin{align*}
D^i\beta_{I_j}(x)\lesssim |I_j|^{-i}.
\end{align*}
The constant $25/24$ is chosen so that $\{I_j^{\ast}\}$ is an almost-disjoint collection. We decompose 
\begin{align*}
\tilde{K}_k=\sum_j\tilde{K}_{k, j},
\end{align*}
where
\begin{align*}
\tilde{K}_{k, j}(x)=\int_0^{\infty}\int_{I_j^{\ast}}e^{is(\alpha x_1+\gamma(\alpha)x_2+1)}\beta_{I_j}(\alpha)\theta_k(s)a(s)s(\alpha\gamma^{\prime}(\alpha)-\gamma(\alpha))\,d\alpha\,ds,
\end{align*}
that is, $\tilde{K}_{k, j}$ is like $\tilde{K}_k$ with $\beta_{I_j}(\alpha)$ inserted into the integral. We may think of this decomposition on the Fourier side as a decomposition of the multiplier $\mathcal{F}[\tilde{K}_k]$ into smooth functions adapted to sectors bounded by rays originating at the origin and passing through points $(\alpha, \gamma(\alpha))$ where $\alpha\in\tilde{\mathfrak{A}}(2^{-k})$. To estimate $\int|\tilde{K}_{k, j}(x)\phi_0(2^{-6M}|x|)|\,dx$, we will further decompose
\begin{align*}
\tilde{K}_{k, j}(x)\cdot\phi_0(2^{-6M}|x|)=\sum_{n\ge 0}\tilde{K}_{k, j, n}(x),
\end{align*}
where we define $\tilde{K}_{k, j, n}$ as follows. Recall that $\phi_0$ is a smooth function such that $[-1/2, 1/2]\prec\phi_0\prec[-1, 1]$, and let
\begin{align}\label{0def}
\Phi_{k, j, 0}(x, \alpha)=\phi_0(|I_j|2^k(x_1+x_2\gamma^{\prime}(\alpha)))
\end{align}
and for $n>0$ let
\begin{align}\label{ndef}
\Phi_{k, j, n}(x, \alpha)=\phi_0(|I_j|2^{k-n}(x_1+x_2\gamma^{\prime}(\alpha)))-\phi_0(|I_j|2^{k-n+1}(x_1+x_2\gamma^{\prime}(\alpha))).
\end{align}
Set
\begin{align*}
\tilde{K}_{k, j, 0}(x):=\phi_0(2^{-6M}|x|)\int_0^{\infty}\int_{I_j^{\ast}}e^{is(\alpha x_1+\gamma(\alpha)x_2+1)}\beta_{I_j(\alpha)}
\\
\Phi_{k, j, 0}(x, \alpha)
\theta_k(s)a(s)s(\alpha\gamma^{\prime}(\alpha)-\gamma(\alpha))\,d\alpha\,ds
\end{align*}
and for $n>0$ set
\begin{align*}
\tilde{K}_{k, j, n}(x):=\phi_0(2^{-6M}|x|)\int_0^{\infty}\int_{I_j^{\ast}}e^{is(\alpha x_1+\gamma(\alpha)x_2+1)}\beta_{I_j(\alpha)}
\\ \Phi_{k, j, n}(x, \alpha)
\theta_k(s)a(s)
s(\alpha\gamma^{\prime}(\alpha)-\gamma(\alpha))\,d\alpha\,ds,
\end{align*}
that is, $\tilde{K}_{k, j, n}$ is like $\tilde{K}_{k, j}$ with $\Phi_{k, j, n}(x, \alpha)$ inserted into the integral. 
\newline
\indent
To estimate $\int|\tilde{K}_{k, j, 0}(x)|\,dx$, we integrate by parts in $s$ twice to obtain
\begin{multline*}
\int|\tilde{K}_{k, j, 0}(x)|\,dx\lesssim2^{k(1-\kappa_{\Omega}-\epsilon)}\int_{I_j^{\ast}}\int_{|x_1+x_2\gamma^{\prime}(\alpha)|\le |I_j|^{-1}2^{-k}}
2^k
\\
\times(1+2^k|\alpha x_1+\gamma(\alpha)x_2+1|)^{-2}\,dx\,d\alpha.
\end{multline*}
Applying the change of coordinates (\ref{invcoord}) yields
\begin{multline*}
\int|\tilde{K}_{k, j, 0}(x)|\,dx\lesssim2^{k(1-\kappa_{\Omega}-\epsilon)}
\\
\times\int_{I_j^{\ast}}\int_{|u_1|\le |I_j|^{-1}2^{-k}}2^k(1+2^k|u_2|)^{-2}\,du_1\,du_2\,d\alpha
\\
\lesssim 2^{-k(\kappa_{\Omega}+\epsilon)}.
\end{multline*}
By (\ref{count}) and (\ref{derp}), we may sum in $j$ to obtain
\begin{align}\label{0est}
\sum_j\int|\tilde{K}_{k, j, 0}(x)|\,dx\lesssim 2^{-k\epsilon/2}.
\end{align}
Now we estimate $\int |K_{k, j, n}(x)|\,dx$ for $n>0$. Observe that $\tilde{K}_{k, j, n}(x)$ is identically zero when $n\ge k$, so we only need consider the case $n<k$. We integrate by parts once with respect to $\alpha$ and then twice with respect to $s$. Integrating by parts with respect to $\alpha$ yields
\begin{multline*}
\tilde{K}_{k, j, n}(x)=\phi_0(2^{-6M}|x|)\int_0^{\infty}\int_{I_j^{\ast}}\partial_{\alpha}g_{k, j, n}(x, \alpha)e^{is(\alpha x_1+\gamma(\alpha)x_2+1)}
\\a(s)\theta(2^{-k}s)\,ds\,d\alpha,
\end{multline*}
where 
\begin{align*}
g_{k, j, n}(x, \alpha)=\frac{\Phi_{k, j, n}(x, \alpha)\beta_{I_j}(\alpha)(\gamma(\alpha)-\alpha\gamma^{\prime}(\alpha))}{x_1+x_2\gamma^{\prime}(\alpha)}.
\end{align*}
Integrating by parts twice with respect to $s$ yields
\begin{multline*}
|\tilde{K}_{k, j, n}(x)|\lesssim2^{k(-\kappa_{\Omega}-\epsilon)}\phi_0(2^{-6M}|x|)\int_{I_j^{\ast}}|\partial_{\alpha}g_{k, j, n}(x, \alpha)|
\\
\times\frac{2^k}{(1+2^k|\alpha x_1+\gamma(\alpha)x_2+1)|)^{2}}\,d\alpha.
\end{multline*}
Observe that on the support of $\tilde{K}_{k, j, n}(x)$, $|x|\lesssim 1$, so
\begin{multline*}
|\partial_{\alpha}g_{k, j, n}(x, \alpha)|\lesssim\frac{|\gamma^{\prime\prime}(\alpha)|(|I_j|2^{k-n}|x|+1)+|I_j|^{-1}}{|x_1+x_2\gamma^{\prime}(\alpha)|}
\\
\lesssim\frac{|\gamma^{\prime\prime}(\alpha)|(|I_j|2^{k-n}+1)+|I_j|^{-1}}{|x_1+x_2\gamma^{\prime}(\alpha)|}.
\end{multline*}
Thus applying the change of coordinates (\ref{invcoord}), we have
\begin{multline*}
\int|\tilde{K}_{k, j, n}(x)|\,dx\lesssim 2^{k(-\kappa_{\Omega}-\epsilon)}\int_{I_j^{\ast}}(|\gamma^{\prime\prime}(\alpha)|(|I_j|2^{k-n}+1)+|I_j|^{-1})
\\
\times\int_{|u_1|\approx 2^{n-k}|I_j|^{-1}}\frac{1}{|u_1|}\frac{2^k}{(1+2^k|u_2|)^2}\,du\,d\alpha
\\
\lesssim 2^{k(-\kappa_{\Omega}-\epsilon)}\int_{I_j^{\ast}}(|\gamma^{\prime\prime}(\alpha)|(|I_j|2^{k-n}+1)+|I_j|^{-1})\,d\alpha.
\end{multline*}
By (\ref{slope}), if we let $b_j^{\ast}$ and $b_{j+1}^{\ast}$ denote the endpoints of $I_j^{\ast}$, then we have
\begin{align*}
\int_{I_j^{\ast}}|\gamma^{\prime\prime}(\alpha)||I_j|\,d\alpha\lesssim(\gamma^{\prime}(b_{j+1}^{\ast})-\gamma^{\prime}(b_j^{\ast}))|I_j|\lesssim 2^{-k},
\end{align*}
and thus
\begin{align*}
\int|\tilde{K}_{k, j, n}(x)|\,dx\lesssim 2^{k(-\kappa_{\Omega}-\epsilon)}.
\end{align*}
Summing in $j$ and $n$, using (\ref{count}) and (\ref{derp}) and recalling that we only need sum over $n<k$, we obtain
\begin{align}\label{nest}
\sum_j\sum_{n\ge 0}\int|\tilde{K}_{k, j, n}(x)|\,dx\lesssim k2^{-k\epsilon}\lesssim 2^{-k\epsilon/2}.
\end{align}
Combining this with our previous estimates (\ref{0est}) and (\ref{ballest}), we have
\begin{align*}
\int|\tilde{K}_{k}(x)|\,dx\lesssim_{\epsilon} 2^{-k\epsilon/2}, 
\end{align*}
as desired, completing the proof of Proposition \ref{mainprop2} and hence Theorem \ref{symbol1}.
\end{proof}

\section{The $H^1\to L^1$ endpoint estimate: preliminaries and estimate on the exceptional set}\label{end}

In this section, we begin the proof of Theorem \ref{symbol2}. Throughout this section $\kappa_{\Omega}=1/2$. We note that we will often continue to write $\kappa_{\Omega}$ instead of subsituting $1/2$ simply to indicate how certain quantities in our estimates arise. As in the proof of Theorem \ref{symbol1}, the first step is to reduce Theorem \ref{symbol2} to a statement about convex domains with smooth boundary. 

\subsection*{Reduction to the case of smooth boundary}
We invoke Lemma \ref{approxlemma} to show that it suffices to prove Theorem \ref{symbol2} in the special case that $\partial\Omega$ is $C^{\infty}$. For any cube $Q\subset\mathbb{R}^2$, recall that an atom associated to $a_Q$ is a bounded, measurable function supported in $Q$ such that
\begin{align*}
\Norm{a_Q}_{\infty}\le |Q|^{-1},
\\
\int_Qa_Q(x)\,dx=0.
\end{align*}
Let $\phi\ge 0$ be a Schwartz function with compactly supported Fourier transform such that $\Norm{\phi}_{L^1}=1$, and for each $m\ge 0$ let $\phi_m(x)=2^{2m}\phi(2^mx)$. Then there is $N=N(M)>0$ sufficiently large so that
\begin{align*}
\Norm{T(a_Q)}_{L^1}=\lim_{m\to\infty}\Norm{\phi_m\ast(T(a_Q))}_{L^1}=\lim_{m\to\infty}\Norm{{\phi}_m\ast\big(\sum_{k=0}^{2^mN}K_k\ast a_Q\big)}_{L^1},
\end{align*}
where $K_k(x)=\mathcal{F}^{-1}[a(\rho(\cdot))e^{i\rho(\cdot)}\theta_k(\rho(\cdot))](x)$. Let $\{\rho_n\}$ be a sequence of Minkowski functionals approximating $\rho$ as in Lemma \ref{approxlemma}, and let $K_{k, n}(x)=\mathcal{F}^{-1}[a(\rho_n(\cdot))e^{i\rho_n(\cdot)}\theta_k(\rho_n(\cdot))](x)$. Now, assuming that Theorem \ref{symbol2} holds in the special case that $\partial\Omega$ is smooth, for each $m$ we have
\begin{multline*}
\Norm{{\phi}_m\ast\big(\sum_{k=0}^{2^mN}K_k\ast a_Q\big)}_{L^1}\lesssim\liminf_{n\to\infty}\Norm{\phi_m\ast\big(\sum_{k=0}^{2^mN}K_{k, n}\ast a_Q\big)}_{L^1}
\\
\lesssim\liminf_{n\to\infty}\Norm{\sum_{k=0}^{\infty}K_{k, n}\ast a_Q}_{L^1}\lesssim 1,
\end{multline*}
where in the first step above we have used the fact that $\rho_n\to\rho$ uniformly on compact sets. Thus we have shown it suffices to prove Theorem \ref{symbol2} in the special case that $\partial\Omega$ is $C^{\infty}$.

\subsection*{Reduction to the case of cubes with small sidelength}
We assume $\partial\Omega$ is $C^{\infty}$. We need to prove that for any atom $a_Q$,
\begin{align}\label{atomest}
\Norm{T(a_Q)}_{L^1(\mathbb{R}^2)}\le C,
\end{align}
where $C$ is a constant independent of the choice of $Q$ or $a_Q$. 
\newline
\indent
First suppose $Q$ has sidelength $\ge 1$. Let $K(x)=\mathcal{F}^{-1}[a(\rho(\cdot))e^{i\rho(\cdot)}](x)$. Recall that $\phi_0$ is a smooth function such that $[-1/2, 1/2]\prec\phi_0\prec[-1, 1]$. Let $\phi(x)=\phi_0(2^{-6M}|x|)$. Then $(K\phi)\ast a_Q$ is supported in $2^{6M+1}Q$, where the dilation is taken from the center of $Q$. Since $\widehat K\in L^{\infty}$, $\Norm{(K\phi)\ast a_Q}_{2}\lesssim\Norm{a_Q}_2$. By Cauchy-Schwarz,
\begin{align}\label{l2bound}
\Norm{(K\phi)\ast a_Q}_{L^1}\lesssim |Q|^{1/2}\Norm{(K\phi)\ast a_Q}_{L^2}\lesssim |Q|^{1/2}\Norm{a_Q}_{L^2}\lesssim 1.
\end{align}
As stated in Remark \ref{rem}, we have already shown in Section \ref{l1kerest} that
\begin{align*}
\Norm{(K(1-\phi))\ast a_Q}_{L^1}\lesssim 1,
\end{align*}
which proves (\ref{atomest}) if the sidelength of $Q$ is $\ge 1$. 
\newline
\indent
Thus we have reduced Theorem \ref{symbol2} to the following proposition.
\begin{proposition}\label{smoothcube}
Let $\Omega$ be a convex domain with smooth boundary satisfying (\ref{ballbound}), and let $\rho$ be its Minkowski functional. Let $a$ and $T$ be as in the statement of Theorem \ref{symbol2}. Then for every cube $Q$ of sidelength $\le 1$ and for every atom $a_Q$ associated to $Q$, we have
\begin{align*}
\Norm{T(a_Q)}_{L^1(\mathbb{R}^2)}\le C,
\end{align*}
where the constant $C$ depends only on $M$ and the quantitative estimates for $a$ as a symbol of order $-1/2$.
\end{proposition}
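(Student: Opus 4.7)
To prove Proposition \ref{smoothcube}, I would follow the wave-packet philosophy of Section \ref{l1kerest}, with additional care to extract the mean-zero property of the atom. Let $r = 2^{-\ell}$ be the sidelength of $Q$; since $T$ is a Fourier multiplier operator and hence commutes with translations, we may assume without loss of generality that $Q$ is centered at the origin. Decompose $K = \sum_{k \ge 0} K_k$ as in \eqref{kkdef} and refine each $K_k = \sum_j K_{k,j}$ angularly using $\tilde{\mathfrak{A}}(2^{-k})$; the improved count \eqref{newcard} together with $\kappa_\Omega = 1/2$ yields $|\tilde{\mathfrak{A}}(2^{-k})| \lesssim 2^{k/2}$.

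For the high-frequency contribution $k \ge 2\ell$, Plancherel together with $|a(\rho(\xi))| \lesssim 2^{-k/2}$ on the support of $\theta_k \circ \rho$ yields $\Norm{K_k \ast a_Q}_{L^2} \lesssim 2^{-k/2} \Norm{a_Q}_{L^2} \lesssim 2^{-k/2} r^{-1}$. The pointwise estimates of Section \ref{l1kerest} --- namely the ``ball'' bound \eqref{ballest} combined with Lemma E --- show that $K_k \ast a_Q$ has its $L^1$ mass concentrated on a set of measure $O(1)$ modulo rapidly decaying tails, so Cauchy--Schwarz gives $\Norm{K_k \ast a_Q}_{L^1} \lesssim 2^{-k/2} r^{-1}$. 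Summing over $k \ge 2\ell$ yields a contribution $\lesssim 1$. For $k < 2\ell$ this estimate fails, and we must use $\int a_Q = 0$. I would build an exceptional set $E = \bigcup_{\alpha \in \tilde{\mathfrak{A}}(r)} R_\alpha$, where $R_\alpha$ is a rectangle whose short axis is along the outward normal to $\partial\Omega$ at $(\alpha, \gamma(\alpha))$, with dimensions chosen so that the count $|\tilde{\mathfrak{A}}(r)| \lesssim r^{-1/2}$ given by \eqref{newcard} yields $|E| \lesssim 1$. Estimating $|E|$ and setting up the geometry of the $R_\alpha$ is the purpose of the present section.

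On $E^c$, a first-order Taylor expansion of $K_{k,j}$ combined with the cancellation $\int a_Q = 0$ produces
\[
(K_{k,j} \ast a_Q)(x) = \int_Q \bigl[K_{k,j}(x-y) - K_{k,j}(x)\bigr] a_Q(y) \, dy,
\]
and the directional smoothness of the wave packet (scale $2^{-k/2}$ tangentially and $2^{-k}$ normally) contributes a small factor of order $r \cdot 2^{k/2}$ or better depending on $x$; this must be summed against the angular count $\lesssim 2^{k/2}$ and the frequency range $k < 2\ell$. On $E$, one exploits the almost-orthogonality of the Fourier-disjoint pieces $\{K_{k,j}\}_j$ for each fixed $k$, combined with $|E| \lesssim 1$, via a tube-by-tube Cauchy--Schwarz argument. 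The principal obstacle is the $L^1(E^c)$ estimate: the Taylor gain $r \cdot 2^{k/2}$ balances the angular count $2^{k/2}$ exactly, leaving no margin in a direct summation. Overcoming this requires a finely tuned choice of $E$ so that the wave packets $K_{k,j}$ contributing outside $E$ enjoy extra pointwise decay --- coming from the smooth localization of $K_{k,j}$ away from its adapted rectangle --- sufficient to close the sums in $k$ and $j$ uniformly in $\ell$. Arranging this balance alongside the orthogonality argument on $E$ constitutes the new techniques alluded to in the introduction.
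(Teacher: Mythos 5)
Your outer reductions are consistent with the paper and partly correct: splitting off the far field $|x|\gtrsim 2^{6M}$ (Remark \ref{rem} with $\epsilon=0$ gives the summable bound $2^{-k\kappa_{\Omega}/2}$), handling the high frequencies $k\ge 2\ell$ by Plancherel plus Cauchy--Schwarz on an $O(1)$-measure set (a legitimate shortcut the paper does not need), and handling $k<\ell$ by the cancellation of the atom (this is exactly the paper's Lemma \ref{klemma}, which uses $\Norm{K_k}_{L^1}\lesssim 1$ and $\int\sup_y|\nabla K_k(x-y)|dx\lesssim 2^k$ to get $2^{k-\ell}$). But the proposition lives in the intermediate range $\ell\lesssim k\lesssim 2\ell$, and there your proposal stops short of a proof: you yourself observe that the Taylor gain $r\cdot 2^{k/2}$ exactly balances the angular count $2^{k/2}$ and that closing the sums ``requires a finely tuned choice of $E$'' and new techniques which you do not supply. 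That unresolved balance is precisely the content of the paper's Sections 4--5, so the core of the argument is missing.

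Two concrete defects. First, your exceptional set has $|E|\lesssim 1$, whereas the paper takes $\mathcal{N}_Q=\bigcup_j E_{\alpha_j}$ adapted to $\tilde{\mathfrak{A}}(2^{-\ell})$ with $|\mathcal{N}_Q|\lesssim 2^{-\ell}$ by (\ref{sumconstraint}); this smaller measure is what makes the estimate on the exceptional set close, via $\Norm{Ta_Q}_{L^1(\mathcal{N}_Q)}\le|\mathcal{N}_Q|^{1/2}\Norm{Ta_Q}_2$ together with the half-order smoothing of the symbol and Hardy--Littlewood--Sobolev, $\Norm{Ta_Q}_2\lesssim\Norm{(I-\Delta)^{-1/4}a_Q}_2\lesssim\Norm{a_Q}_{4/3}\lesssim 2^{\ell/2}$. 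With $|E|\lesssim 1$ the corresponding $L^2$ bound cannot absorb the factor $2^{\ell/2}$ (let alone $2^{\ell}$ without the smoothing), and the ``almost-orthogonality plus tube-by-tube Cauchy--Schwarz'' you invoke is never quantified, so the estimate on $E$ is not established. Second, off the exceptional set the decay that makes the sums over $k\ge\ell$, $j$, and $n>k-\ell$ converge does not come from a first-order Taylor expansion against the atom (whose worst-direction cost is $r\cdot 2^{k}$, and which is already exhausted in the regime $k<\ell$); in the paper it comes from a second integration by parts in the angular variable $\alpha$, which is only admissible after replacing $\gamma$ by the approximating curve $\gamma_k$ of Lemma \ref{approxlemma2}, whose cap-by-cap bounds (\ref{p3}), (\ref{p4}), (\ref{p7}), (\ref{p8}) keep all constants at the level of the $C^1$ data of $\partial\Omega$, together with Lemma \ref{supplemma} (the piece $S^{\prime}_{l,k,j}$ maps $a_Q$ into $\mathcal{N}_Q$), the error estimate of Lemma \ref{errorlemma}, and the counting bound (\ref{csineq}). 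None of these ingredients, nor substitutes for them, appear in your sketch, so as written the proposal does not prove Proposition \ref{smoothcube}.
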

We now make the same observation made at the beginning of the proof of Proposition \ref{mainprop2} and note that it is enough to prove Proposition \ref{smoothcube} with the kernel $K$ of the operator $T$ redefined as
\begin{align}\label{newKdef}
K(x):=\mathcal{F}^{-1}[a(\rho(\cdot))e^{i\rho(\cdot)}\theta_k(\rho(\cdot))\chi(\cdot)](x),
\end{align} 
where $\chi$ is the same smooth angular cutoff as in (\ref{kktwiddledef}). Thus in what follows we will take (\ref{newKdef}) to be our definition of $K$.

\subsection*{Estimate on the exceptional set}
In what follows we assume that $Q$ is a cube of sidelength $2^{-l}$ for some $l\ge 0$, and $a_Q$ an atom associated to $Q$. To prove Proposition \ref{smoothcube}, we will define an exceptional set of sufficiently small measure off of which $T(a_Q)$ decays.
Let $\Sigma_{\rho}$ be the smooth closed curve given by 
\begin{align*}
\Sigma_{\rho}:=\{\xi: \xi=-\nabla\rho(\xi^{\prime})\text{ for some }\xi^{\prime}\in\mathbb{R}^2\}.
\end{align*} 
Since $\nabla\rho$ is homogeneous of degree $0$, this indeed corresponds to a smooth closed curve. As noted previously, the gradient of the phase $ix\cdot\xi+i\rho(\xi)$ vanishes on the singular set $\Sigma_{\rho}$. We would like to associate to $Q$ an exceptional set $\mathcal{N}_Q$. A natural choice for $\mathcal{N}_Q$ might be 
\begin{align*}
\{x\in\mathbb{R}^2|\,|x-\Sigma_{\rho}|\le C2^{-l}\}
\end{align*}
for some choice of constant $C$. However, for technical reasons we will choose $\mathcal{N}_Q$ to be a slightly larger set. Let $\{I_j\}$ be the partition of $[-1, 1]$ into subintervals corresponding to the subset $\mathfrak{A}(2^{-l})$ of $[-1, 1]$, as given by Lemma \ref{endpointlemma}. We emphasize that although the collection of intervals $\{I_j\}$ is indexed only by $j$, it implicitly depends on $l$ as well. (Recall that $Q$ has sidelength $2^{-l}$.) For each $j$, choose some $\alpha_j\in I_j$. Define
\begin{align*}
E_{\alpha_j}:=\{x:\,|\alpha_j x_1+\gamma(\alpha_j)x_2+1|\le 2^{-l+15M}, 
\\
|x_1+x_2\gamma^{\prime}(\alpha_j)|\le 2^{-l+15M}|I_j|^{-1}\},
\end{align*}
and define
\begin{align*}
\mathcal{N}_Q:=\bigcup_j E_{\alpha_j}.
\end{align*} 
Then by (\ref{sumconstraint}),
\begin{align*}
|\mathcal{N}_Q|\lesssim \sum_j2^{-2l}|I_j|^{-1}\lesssim 2^{-l}.
\end{align*}
We follow \cite{sss} to estimate $T(a_Q)$ on $\mathcal{N}_Q$. 
By the Hardy-Littlewood-Sobolev inequality,
\begin{align*}
\Norm{(I-\Delta)^{-1/4}a_Q}_2\lesssim\Norm{a_Q}_{4/3}.
\end{align*}
Since $a$ is a symbol of order $-1/2$ and $\rho$ is homogeneous of degree one, the operator $T(I-\Delta)^{-1/4}$ is bounded on $L^2$, and so after using H\"{o}lder's inequality twice we have
\begin{align*}
\Norm{T(a_Q)}_{L^1(\mathcal{N}_Q)}\lesssim 2^{-l/2}\Norm{T(a_Q)}_2\lesssim 2^{-l/2}\Norm{(I-\Delta)^{-1/4}a_Q}_2
\\
\lesssim 2^{-l/2}\Norm{a_Q}_{4/3}\lesssim 1.
\end{align*}
Thus to prove Proposition \ref{smoothcube}, It remains to show 
\begin{align}\label{offexcset}
\Norm{T(a_Q)}_{L^1(\mathbb{R}^2\setminus\mathcal{N}_Q)}\lesssim 1.
\end{align}
As noted in Remark \ref{rem}, we have already shown that 
\begin{align*}
\int|K(x)(1-\phi_0(2^{-6M}|x|))|\,dx\lesssim 1.
\end{align*}
Thus if we let $S$ denote the operator with kernel $K(x)(\phi_0(2^{-6M}|x|)$, (\ref{offexcset}) reduces to proving
\begin{align}\label{offexcset2}
\Norm{S(a_Q)}_{L^1(\mathbb{R}^2\setminus\mathcal{N}_Q)}\lesssim 1.
\end{align}
We now proceed to decompose $S$ as a sum of operators, some of which map $a_Q$ to a function supported inside the exceptional set $\mathcal{N}_Q$; these operators will not contribute to the left hand side of (\ref{offexcset2}). Let $S_k$ denote the operator with kernel $K_k(x)\phi_0(2^{-6M}|x|)$, where 
\begin{align*}
K_k(x)=\mathcal{F}^{-1}[a(\rho(\cdot))e^{i\rho(\cdot)}\theta_k(\rho(\cdot))\chi(\cdot)](x).
\end{align*}
As before, we let $\{I_j\}$ be the collection of intervals corresponding to the partition of $[-1, 1]$ given by $\tilde{\mathfrak{A}}(2^{-l})$, as defined in Section \ref{prelim}. 
\newline
\indent
For each $j$, define
\begin{align*}
\Phi_{l, j, 0}(x, \alpha)=\phi_0(|I_j|2^l(x_1+x_2\gamma^{\prime}(\alpha))).
\end{align*}
For each $j, k$ and for each $n>0$, define
\begin{align*}
\Phi_{k, j, n}(x, \alpha)=\phi_0(|I_j|2^{k-n}(x_1+x_2\gamma^{\prime}(\alpha)))-\phi_0(|I_j|2^{k-n+1}(x_1+x_2\gamma^{\prime}(\alpha))).
\end{align*}
For each $k, j, n\ge 0$, we consider the operators $S_{l, k, j, n}$, $\tilde{S}_{l, k, j}$ and $S^{\prime}_{l, k, j}$ with kernels $L_{l, k, j, n}$, $\tilde{L}_{l, k, j}$ and $L^{\prime}_{l, k, j}$, respectively, given by
\begin{multline}\label{L1}
L_{l, k, j, n}:=\phi_0(2^{-6M}|x|)\int\int e^{is(\alpha x_1+\gamma(\alpha)x_2+1)}\beta_{I_j}(\alpha)
\\
\times\Phi_{k, j, n}(x, \alpha)\theta_k(s)a(s)s(\alpha\gamma^{\prime}(\alpha)-\gamma(\alpha))\chi(\alpha)\,d\alpha\,ds,
\end{multline}  
\begin{multline}\label{L2}
\tilde{L}_{l, k, j}:=\phi_0(2^{-6M}|x|)\int\int e^{is(\alpha x_1+\gamma(\alpha)x_2+1)}\beta_{I_j}(\alpha)
\\
\times\Phi_{l, j, 0}(x, \alpha)(1-\phi_0(2^{l}(\alpha x_1+\gamma(\alpha)x_2+1)))
\\
\times\theta_k(s)a(s)s(\alpha\gamma^{\prime}(\alpha)-\gamma(\alpha))\chi(\alpha)\,d\alpha\,ds
\end{multline}
and
\begin{multline}\label{L3}
{L}^{\prime}_{l, k, j}:=\phi_0(2^{-6M}|x|)\int\int e^{is(\alpha x_1+\gamma(\alpha)x_2+1)}\beta_{I_j}(\alpha)
\\
\times\Phi_{l, j, 0}(x, \alpha)\phi_0(2^{l}(\alpha x_1+\gamma(\alpha)x_2+1))
\\
\times\theta_k(s)a(s)s(\alpha\gamma^{\prime}(\alpha)-\gamma(\alpha))\chi(\alpha)\,d\alpha\,ds.
\end{multline}
Note that $L_{l, k, j, n}(x)$ is like $K_k(x)\phi_0(2^{-6M}|x|)$ with 
\begin{align*}
\beta_{I_j}(\alpha)\cdot\Phi_{k, j, n}(x, \alpha)
\end{align*}
inserted into the integral, $\tilde{L}_{l, k, j}(x)$ is like $K_k(x)\phi_0(2^{-6M}|x|)$ with 
\begin{align*}
\beta_{I_j}(\alpha)\cdot\Phi_{l, j, 0}\cdot(1-\phi_0(2^l(\alpha x_1+\gamma(\alpha)x_2+1)))
\end{align*}
inserted into the integral, and ${L}^{\prime}_{l, k, j}(x)$ is like $K_k(x)\phi_0(2^{-6M}|x|)$ with 
\begin{align*}
\beta_{I_j}(\alpha)\cdot\Phi_{l, j, 0}\cdot\phi_0(2^l(\alpha x_1+\gamma(\alpha)x_2+1))
\end{align*}
inserted into the integral. These kernels are most easily visualized using the coordinate system of (\ref{invcoord}); see Figure \ref{fig3}.
\begin{figure}\label{fig4}

\begin{tikzpicture}[scale=0.5]
\hspace*{-3cm}
\filldraw[thick, fill=blue!20!white, draw= black, ] plot [smooth cycle] coordinates {(-3, 2) (-2, -2) (1, -3) (4, 1) };
\draw[thick, ->] (0, 0)--(5, 0) node[anchor=north west] {$\xi_1$};
\draw[thick, ->] (0, 0)--(0, 5) node[anchor=south east] {$\xi_2$};
\draw[thick, dashed, ->] (0, 0)--(1.5, -4.5);
\draw[thick, dashed, ->] (1, -3)--(2.5, -2.1);
\draw[thick, shift={(.6, -3.3)},rotate=28] (0, 0)--(1.7, 0)--(1.7, 0.4)--(0, 0.4)--cycle;
\filldraw
(.52, -3.1) circle (2pt) node[anchor= north east] {\tiny $(b_j, \gamma(b_j))$};
\filldraw
(1.95, -2.25) circle (2pt) node[anchor= north west] {\tiny $(b_{j+1}, \gamma(b_{j+1}))$};
\filldraw
(1, -3) circle (2pt) node[anchor=north west] {\tiny $(\alpha, \gamma(\alpha))$};
\filldraw 
(0,0) circle (2pt) node[anchor=north east] {\tiny $(0, 0)$};
\node at (-1, 1) {\LARGE $\Omega$};

\hspace{+6cm}

\draw[thick, ->] (0, 0)--(5, 0) node[anchor=south west] {$x_1$};
\draw[thick, ->] (0, 0)--(0, 5) node[anchor=south east] {$x_2$};
\draw[thick, ->] (0, 0)--(0, -5); 
\draw[thick, ->] (0, 0)--(-5, 0);
\draw [name=A, thick, dashed, rotate=31, shift={(4.05, -6.5)}] (0, -1)--(0, 10);
\draw [name=B, thick, dashed, rotate=31, shift={(.2, -6.5)}] (0, .8)--(0, 11.6);
\filldraw[fill=blue!20!white, scale=0.5, thick, shift={(4.1, -5.8)},rotate=20] (-.1, 0)--(7.7, 0)--(7.5, .8)--(-.31, .8)--cycle;
\filldraw
(3.4, -2.25) circle (2pt) node[anchor= north west] {\tiny $\nabla\rho(\alpha, \gamma(\alpha))$};
\draw[thick, ->] (3.4, -2.25)--(3.9, -3.75) node[anchor=south east] {\tiny $u_2$} ;
\draw[thick, ->] (3.4, -2.25)--(4.9, -1.35) node[anchor=south east] {\tiny$u_1$} ;

2.25 
\end{tikzpicture}

\caption{The domain $\Omega$ is depicted on the left, where for a fixed $j$ a point $(\alpha, \gamma(\alpha))$ is chosen so that $\alpha\in I_j$. On the right, up to dilation by a constant, the shaded parallelogram represents the support of $\Phi_{l, j, 0}(x, \alpha)\cdot\phi_0(2^l(\alpha x_1+\gamma(\alpha)x_2+1))$, and up to dilation by a constant the region between the two dashed lines represents the support of $\Phi_{l, j, 0}(x, \alpha)\cdot(1-\phi_0(2^l(\alpha x_1+\gamma(\alpha)x_2+1))$. The region outside the two dashed lines represents the support of $\sum_{n:\, n>k-l}\Phi_{k, j, n}(x, \alpha)$. Note that the long side of the shaded parallelogram is orthogonal to $u_2$, and the dashed lines are orthogonal to $u_1$. The short side of the parallelogram has length $\approx 2^{-l}$, and the long side has length $\approx 2^{-l}|I_j|^{-1}$. }\label{fig3}
\end{figure}
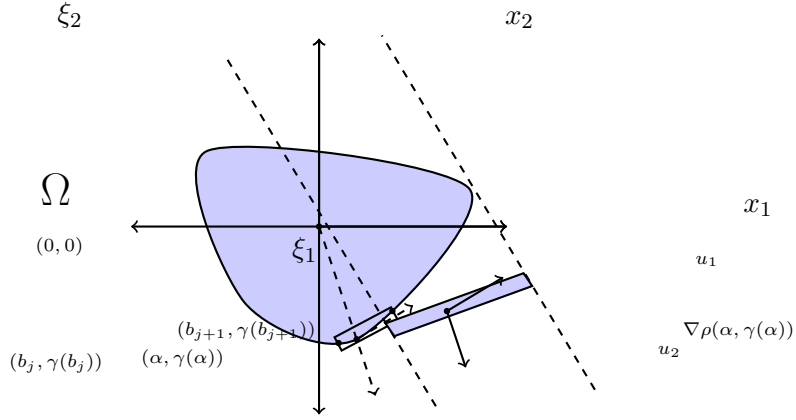

We can write
\begin{align}\label{rep}
S=\sum_{k:\,k<l}S_k+\sum_{k:\,k\ge l}\sum_{n:\,n>k-l}\sum_jS_{l, k, j, n}+\sum_{k:\,k\ge l}\sum_j(\tilde{S}_{l, k, j}+S^{\prime}_{l, k, j}).
\end{align}
If we fix some $k\ge l$ and freeze all sums in $k$ in (\ref{rep}), then we may interpret (\ref{rep}) as follows. The term $\sum_{n:\,n>k-l}\sum_jS_{l, k, j, n}$ may be thought of as the portion of the kernel of $S_k$ supported away in the $u_1$-direction from the exceptional set $\mathcal{N}_Q$, with the distance from $\mathcal{N}_Q$ increasing as $n$ increases. The term $\sum_j\tilde{S}_{l, k, j}$ may be thought of as the portion of the kernel of $S_k$ supported away in the $u_2$-direction from $\mathcal{N}_Q$. We will see that the kernel of the term $\sum_jS^{\prime}_{l, k, j}$ is supported in $\mathcal{N}_Q$. We prove the following lemma.
\begin{lemma}\label{supplemma}
The support of 
\begin{align*}
\sum_{k:\,k\ge l}\sum_jS^{\prime}_{l, k, j}a_Q
\end{align*}
is contained in $\mathcal{N}_Q$.
\end{lemma}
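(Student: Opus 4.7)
The plan is to unwrap the cutoffs in the definition \eqref{L3} of $L'_{l,k,j}$ to determine the spatial support of the kernel, and then show that this support, enlarged by $Q$, is contained in $E_{\alpha_j}$. By the translation invariance of the convolution operator I may assume $Q$ is centered at the origin, so that any $x \in \operatorname{supp}(S'_{l,k,j}a_Q)$ can be written as $x = z+y$ with $|y_1|,|y_2|\leq 2^{-l-1}$ and $z \in \operatorname{supp}(L'_{l,k,j})$. From the cutoffs $\beta_{I_j}(\alpha)$, $\Phi_{l,j,0}(z,\alpha)$, $\phi_0(2^l(\alpha z_1+\gamma(\alpha)z_2+1))$, and $\phi_0(2^{-6M}|z|)$ in \eqref{L3}, the nonvanishing of $L'_{l,k,j}(z)$ forces the existence of some $\alpha \in I_j^\ast$ with
\begin{align*}
|z_1+z_2\gamma'(\alpha)| &\leq 2\cdot 2^{-l}|I_j|^{-1}, \qquad |\alpha z_1+\gamma(\alpha)z_2+1| \leq 2\cdot 2^{-l}, \qquad |z| \leq 2^{6M+1}.
\end{align*}

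The main step is then to verify the two inequalities defining $E_{\alpha_j}$ at the point $x=z+y$. For the second, I expand
\begin{align*}
x_1+x_2\gamma'(\alpha_j) = \bigl(z_1+z_2\gamma'(\alpha)\bigr) + z_2\bigl(\gamma'(\alpha_j)-\gamma'(\alpha)\bigr) + \bigl(y_1+y_2\gamma'(\alpha_j)\bigr).
\end{align*}
The first piece is controlled by the first cutoff; the crucial point is to bound $|\gamma'(\alpha_j)-\gamma'(\alpha)|$ for $\alpha_j,\alpha \in I_j^\ast$ by using the slope control \eqref{slope} of Lemma \ref{endpointlemma}, together with the comparability \eqref{coomp} of adjacent intervals to cover the $25/24$-enlargement. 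This yields $|\gamma'(\alpha_j)-\gamma'(\alpha)| \lesssim |I_j|^{-1}2^{-l}$, so combined with $|z|\leq 2^{6M+1}$ and $|I_j| \leq 2$ the three pieces each contribute at most a constant (depending on $M$) times $|I_j|^{-1}2^{-l}$, which is absorbed into $2^{-l+15M}|I_j|^{-1}$.

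For the first defining inequality of $E_{\alpha_j}$, I expand analogously
\begin{align*}
\alpha_j x_1 + \gamma(\alpha_j)x_2 + 1 = \bigl(\alpha z_1+\gamma(\alpha)z_2+1\bigr) + (\alpha_j-\alpha)z_1 + \bigl(\gamma(\alpha_j)-\gamma(\alpha)\bigr)z_2 + \alpha_j y_1 + \gamma(\alpha_j)y_2,
\end{align*}
and rewrite the middle terms via the mean value theorem as $(\alpha_j-\alpha)(z_1+z_2\gamma'(\xi))$ for some $\xi \in I_j^\ast$. Further decomposing $z_1+z_2\gamma'(\xi) = (z_1+z_2\gamma'(\alpha)) + z_2(\gamma'(\xi)-\gamma'(\alpha))$ lets me reuse the first cutoff bound and the slope estimate above. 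Since $|\alpha_j-\alpha|\leq (25/24)|I_j|$, the $|I_j|^{-1}$ factors cancel and the middle contribution is $\lesssim 2^{6M}\cdot 2^{-l}$, while $|\alpha_j y_1+\gamma(\alpha_j)y_2| \lesssim 2^{M}\cdot 2^{-l}$; both are absorbed by $2^{-l+15M}$.

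The main obstacle is ensuring that the slope and length information from Lemma \ref{endpointlemma} controls the differences $|\gamma'(\alpha_j)-\gamma'(\alpha)|$ uniformly over $\alpha,\alpha_j$ in the \emph{enlargement} $I_j^\ast$; the comparability \eqref{coomp} of adjacent intervals is exactly what makes this possible, letting all error terms be absorbed into the generous $2^{15M}$ factor built into the definition of $E_{\alpha_j}$.
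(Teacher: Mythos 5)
Your proposal is correct and takes essentially the same route as the paper: both arguments show that the cutoffs at the variable $\alpha\in I_j^{\ast}$ confine $x$ to the enlarged parallelogram attached to the fixed point $\alpha_j$, with the flatness inputs \eqref{slope} and \eqref{coomp} (equivalently \eqref{left}) giving $|\gamma^{\prime}(\alpha_j)-\gamma^{\prime}(\alpha)|\lesssim 2^{-l}|I_j|^{-1}$ and the generous factor $2^{15M}$ absorbing all constants together with the translate by $Q$. You organize the bookkeeping by directly expanding the two linear forms defining $E_{\alpha_j}$ (and you make explicit the harmless normalization, via translation invariance, that $Q$ is centered at the origin), whereas the paper routes the same estimates through the centers $\nabla\rho(\alpha,\gamma(\alpha))=(c_{\alpha},d_{\alpha})$; the substance is identical.
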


\begin{proof}[Proof of Lemma \ref{supplemma}]
Since $a_Q$ is supported in a cube of sidelength $2^{-l}$, it suffices to show that the kernel of $\sum_{k:\,k\ge l}\sum_jS^{\prime}_{l, k, j}$ is supported in
\begin{align*}
\tilde{\mathcal{N}}_Q:=\bigcup_j\tilde{E}_{\alpha_j}.
\end{align*}
where
\begin{align*}
\tilde{E}_{\alpha_j}:=\{x:\,|\alpha_j x_1+\gamma(\alpha_j)x_2+1|\le 2^{-l+14M}, 
\\
|x_1+x_2\gamma^{\prime}(\alpha_j)|\le 2^{-l+14M}|I_j|^{-1}\}.
\end{align*}
Observe that if we set 
\begin{align*}
c_{\alpha}=\gamma^{\prime}(\alpha)(\alpha\gamma^{\prime}(\alpha)-\gamma(\alpha))^{-1}
\end{align*}
and 
\begin{align*}
d_{\alpha}=-(\alpha\gamma^{\prime}(\alpha)-\gamma(\alpha))^{-1},
\end{align*}
then
\begin{align*}
\alpha x_1+\gamma(\alpha)x_2+1=(\alpha, \gamma(\alpha))\cdot (x_1+c_{\alpha}, x_2+d_{\alpha}),
\end{align*}
and moreover 
\begin{align*}
(c_{\alpha}, d_{\alpha})\cdot (1, \gamma^{\prime}(\alpha))=0.
\end{align*}
In fact, (\ref{gradequals}) states that $(c_{\alpha}, d_{\alpha})=\nabla\rho(\alpha, \gamma(\alpha))$. Now, for any $\alpha, \alpha^{\prime}\in I_j^{\ast}$, (\ref{dot1}) implies that we have
\begin{multline}\label{mess1}
(\alpha, \gamma(\alpha))\cdot(c_{\alpha^{\prime}}-c_{\alpha}, d_{\alpha^{\prime}}-d_{\alpha})=(c_{\alpha^{\prime}}, d_{\alpha^{\prime}})\cdot(\alpha, \gamma(\alpha))-1
\\
=\frac{(\gamma^{\prime}(\alpha^{\prime}), -1)\cdot(\alpha, \gamma(\alpha))}{(\gamma^{\prime}(\alpha^{\prime}), -1)\cdot(\alpha^{\prime}, \gamma(\alpha^{\prime}))}-1.
\end{multline}
By (\ref{left}), we have that 
\begin{align}\label{mess2}
|(\gamma^{\prime}(\alpha^{\prime}), -1)\cdot(\alpha^{\prime}-\alpha, \gamma(\alpha^{\prime})-\gamma(\alpha))|\le 2^{-l+4}.
\end{align}
Indeed, (\ref{mess2}) is equivalent to the statement that $(\alpha, \gamma(\alpha))$ is contained in a rectangle of width $\le 2^{-l+4}$ containing $(\alpha^{\prime}, \gamma(\alpha^{\prime}))$ with short side parallel to the normal to $\partial\Omega$ at $(\alpha^{\prime}, \gamma(\alpha^{\prime}))$. That is, $(\alpha, \gamma(\alpha))$ and $(\alpha^{\prime}, \gamma(\alpha^{\prime}))$ are contained in a single ``Minkowski cap" of width $\delta\le 2^{-l+4}$.
\newline
\indent
As mentioned in (\ref{innprod}), $|(\gamma^{\prime}(\alpha^{\prime}), -1)\cdot(\alpha^{\prime}, \gamma(\alpha^{\prime}))|\ge 2^{-4M}$, and so it follows from (\ref{mess1}) and (\ref{mess2}) that
\begin{multline}\label{mess3}
|(\alpha, \gamma(\alpha))\cdot(c_{\alpha^{\prime}}-c_{\alpha}, d_{\alpha^{\prime}}-d_{\alpha})|
\le\frac{(\gamma^{\prime}(\alpha^{\prime}), -1)\cdot(\alpha^{\prime}-\alpha, \gamma(\alpha^{\prime})-\gamma(\alpha))}{(\gamma^{\prime}(\alpha^{\prime}), -1)\cdot(\alpha^{\prime}, \gamma(\alpha^{\prime}))}
\\
\le 2^{-l+5M}.
\end{multline}
We also note that for any $\alpha, \alpha^{\prime}\in I_j^{\ast}$, 
\begin{multline}\label{mess4}
|(c_{\alpha^{\prime}}-c_{\alpha}, d_{\alpha^{\prime}}-d_{\alpha})|\le 2^{10M} \max(|\gamma^{\prime}(\alpha)-\gamma^{\prime}(\alpha^{\prime})|, |\gamma(\alpha)-\gamma(\alpha^{\prime})|)
\\
\le 2^{10M}\max(2^{-l}|I_j|^{-1}, 2^{-l})\le 2^{-l+10M}|I_j|^{-1},
\end{multline}
where in the second step we have used (\ref{left}). It follows from (\ref{mess3}) and (\ref{mess4}) that for any $\alpha, \alpha^{\prime}\in I_j^{\ast}$,  
\begin{multline}\label{mess5}
\text{supp}\bigg(\phi_0(|I_j|2^l(x_1+x_2\gamma^{\prime}(\alpha^{\prime})))\phi_0(2^l(\alpha^{\prime}x_1+\gamma(\alpha^{\prime})x_2+1))\bigg)
\\\subset\{x:\,(x+(c_{\alpha}, d_{\alpha}))\cdot(1, \gamma^{\prime}(\alpha^{\prime}))\le 2^{-l+12M}|I_j|^{-1},
\\
(x+(c_{\alpha}, d_{\alpha}))\cdot(\alpha^{\prime}, \gamma(\alpha^{\prime}))\le 2^{-l+12M}\}.
\end{multline}
Next, we note that (\ref{ballbound}) implies that for any $\alpha, \alpha^{\prime}\in I_j^{\ast}$, the angle between $(\alpha, \gamma(\alpha))$ and $(\alpha^{\prime}, \gamma(\alpha^{\prime}))$ is $\le |I_j|$, and this combined with (\ref{mess5}) implies that for any $\alpha, \alpha^{\prime}\in I_j^{\ast}$,
\begin{multline}\label{mess6}
\text{supp}\bigg(\phi_0(|I_j|2^l(x_1+x_2\gamma^{\prime}(\alpha^{\prime})))\phi_0(2^l(\alpha^{\prime}x_1+\gamma(\alpha^{\prime})x_2+1))\bigg)
\\
\subset \tilde{E}_{\alpha}:=\{x: |\alpha x_1+\gamma(\alpha)x_2+1|\le 2^{-l+14M},\,|x_1+x_2\gamma^{\prime}(\alpha)|\le 2^{-l+14M}|I_j|^{-1}\},
\end{multline}
and taking $\alpha=\alpha_j$ completes the proof.
\end{proof}
We have thus reduced Proposition \ref{smoothcube}, and hence also Theorem \ref{symbol2}, to the following proposition.

\begin{proposition}\label{offprop}
Let $\tilde{S}_{l, k, j}$, $S_{l, k, j, n}$ and $S_k$ be as defined previously. Then
\begin{align}\label{kglest}
\Norm{\big(\sum_{k: k\ge l}\sum_j(\tilde{S}_{l, k, j}+\sum_{n: n>{k-l}}S_{l, k, j, n})\big)(a_Q)}_{L^1(\mathbb{R}^2)}\lesssim 1
\end{align}
and
\begin{align}\label{kllest}
\Norm{\sum_{k: k<l}S_{k}(a_Q)}_{L^1(\mathbb{R}^2)}\lesssim 1.
\end{align}
\end{proposition}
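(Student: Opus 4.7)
The plan is to treat (\ref{kllest}) and (\ref{kglest}) separately. The first is a relatively soft consequence of the mean-zero cancellation of $a_Q$, while the second is the main technical content and is the reason Theorem \ref{symbol2} requires ``the introduction of new techniques'' beyond Theorem \ref{symbol1}.

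For (\ref{kllest}), in the low-frequency regime $k<l$, I would exploit $\int a_Q=0$. Since the kernel $K_k$ of $S_k$ oscillates at spatial scale $2^{-k}$, which exceeds the diameter $2^{-l}$ of $Q$, a first-order Taylor expansion about the center of $Q$ gives
$$\Norm{S_k a_Q}_{L^1(\mathbb{R}^2)}\lesssim 2^{-l}\Norm{\nabla(K_k\phi_0(2^{-6M}|\cdot|))}_{L^1(\mathbb{R}^2)}.$$
The gradient bound $\Norm{\nabla K_k}_{L^1}\lesssim 2^k$ is obtained by rerunning the integration-by-parts calculations of Section \ref{l1kerest} with an extra factor of $s\sim 2^k$ produced by differentiating the phase $e^{is(\alpha x_1+\gamma(\alpha)x_2+1)}$. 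Here it is essential that $\kappa_\Omega=1/2$, so that the sharper cardinality bound (\ref{newcard}) of Lemma \ref{endpointlemma} eliminates the logarithmic losses present in Proposition \ref{mainprop2} when $\epsilon=0$. Combining these estimates gives $\Norm{S_k a_Q}_{L^1}\lesssim 2^{k-l}$, and the geometric sum $\sum_{k<l}2^{k-l}\lesssim 1$ yields (\ref{kllest}).

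For (\ref{kglest}), in the high-frequency regime $k\ge l$, I would follow the framework of \cite{sss}, adapted to rough convex boundaries. By Lemma \ref{supplemma} the pieces $S'_{l,k,j}$ have already been absorbed into the exceptional-set estimate, so only $\tilde S_{l,k,j}$ and $S_{l,k,j,n}$ with $n>k-l$ remain. For $\tilde L_{l,k,j}$, the factor $1-\phi_0(2^l(\alpha x_1+\gamma(\alpha)x_2+1))$ localizes the kernel to $|u_2|\gtrsim 2^{-l}$ in the coordinates (\ref{invcoord}); iterated integration by parts in $s$ produces a decay factor $(1+2^k|u_2|)^{-N}$, and integrating over $|u_2|\gtrsim 2^{-l}$ after changing variables to $(u_1,u_2)$ and over $\alpha\in I_j^*$ yields a bound with the crucial spatial-separation gain $2^{l-k}$ beyond the amplitude factor $2^{-k/2}$. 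For $L_{l,k,j,n}$ with $n>k-l$, the annular cutoff $\Phi_{k,j,n}$ in the $u_1$-direction permits a single integration by parts in $\alpha$ (gaining $|u_1|^{-1}\sim 2^{k-n}|I_j|$) combined with two integrations by parts in $s$, and the restriction $n>k-l$ ensures that the support sits outside $\mathcal N_Q$. Summing over $j$ using (\ref{newcard}), over $n$ geometrically, and over $k\ge l$ geometrically in $k-l$ produces the desired $O(1)$ bound.

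The main obstacle lies in the summation over $k\ge l$: naive pointwise $L^1$ kernel bounds on $\tilde L_{l,k,j}$ and $L_{l,k,j,n}$ are not summable in $k$ alone, and one must carefully extract an extra decay factor of the form $2^{l-k}$ from the interplay between the $2^{-l}$-scale spatial cutoffs and the $2^{-k}$-scale oscillation of the kernel. Tracking this factor through the repeated integrations by parts, while operating at the endpoint symbol order $-1/2$ (with no spare $\epsilon$), is the heart of the argument, and is exactly where the sharp cardinality estimate (\ref{newcard}) of Lemma \ref{endpointlemma} is indispensable.
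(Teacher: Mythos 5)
Your treatment of \eqref{kllest} and of the $\tilde S_{l,k,j}$ pieces follows the paper (cancellation of the atom plus $\Norm{\nabla K_k}_{L^1}\lesssim 2^k$; three integrations by parts in $s$ on $\tilde L_{l,k,j}$ with the support condition $|u_2|\gtrsim 2^{-l}$, giving $2^{l-k}2^{-k/2}$ per piece and then summing via \eqref{newcard}). But there is a genuine gap in your treatment of the terms $S_{l,k,j,n}$, $n>k-l$, and it is exactly the point where the paper's new machinery enters. A single integration by parts in $\alpha$ combined with integrations by parts in $s$ is the Section \ref{l1kerest} argument: the gain $|u_1|^{-1}\approx 2^{k-n}|I_j|$ is exactly cancelled by the measure $\approx 2^{n-k}|I_j|^{-1}$ of the $u_1$-support of $\Phi_{k,j,n}$, so the resulting bound per $(k,j,n)$ is $\approx 2^{-k\kappa_\Omega}$ with \emph{no} decay in $n$. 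Your claimed geometric summation ``over $n$'' therefore does not exist, and after summing over $j$ (cardinality $\approx 2^{l/2}$, at scale $l$, not $k$) and over the $\approx l$ admissible values of $n$ one picks up an unbounded factor $\approx l$; at the endpoint symbol order $-1/2$ there is no spare $\epsilon$ to absorb it. Note also that \eqref{newcard} removes only the $k^2$ loss in the interval count, not this $n$-summation loss, so your appeal to \eqref{newcard} in the $k<l$ case is likewise insufficient as stated: the uniform bound $\int|K_k|\lesssim 1$ that you need there is obtained in the paper from the $k\ge l$ analysis specialized to $k=l$, not from rerunning Proposition \ref{mainprop2} with $\epsilon=0$.

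To get genuine decay in $n$ one must integrate by parts \emph{twice} in $\alpha$, but doing so directly brings in $\gamma''$ pointwise and would make the estimates depend on the $C^2$ norm of $\partial\Omega$, which is not controlled. The paper's resolution, absent from your proposal, is Lemma \ref{approxlemma2}: construct a smooth $\gamma_k$ agreeing with $\gamma$ and $\gamma'$ at the points of $\mathfrak{A}(2^{-k})$, with $|\gamma_k''|\lesssim 2^{-k}|J_m|^{-2}$ and $\int_{J_m}|\gamma_k'''|\lesssim 2^{-k}|J_m|^{-2}$ on each cap $J_m$. One then splits $L_{l,k,j,n}=(L_{l,k,j,n}-B_{l,k,j,n})+B_{l,k,j,n}$, where $B$ has $\gamma$ replaced by $\gamma_k$; the error is handled with one $\alpha$-integration by parts, gaining smallness $2^{n-k}|I_j|^{-1}$ from $|\gamma-\gamma_k|\lesssim 2^{-k}$ and $|\gamma'-\gamma_k'|\lesssim 2^{-k}|J_m|^{-1}$ (Lemma \ref{errorlemma}), while the main term is integrated by parts twice in $\alpha$, legitimately, yielding the decaying bound \eqref{f3}, $\sum_j\int|B_{l,k,j,n}|\lesssim 2^{-n}+2^{(l-k)\kappa_\Omega}2^{k-l-n}$. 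Closing these sums also requires the counting comparison between the two partitions $\{I_j\}$ (scale $2^{-l}$) and $\{J_m\}$ (scale $2^{-k}$), e.g.\ the Cauchy--Schwarz bound \eqref{csineq} and the $\Delta_{I_j}(\gamma')$ versus $\Delta_{J_m}(\gamma')$ estimates. Without this approximation-and-splitting device (or some substitute producing decay in $n$ without using the $C^2$ norm of the boundary), your argument does not close.
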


\section{The $H^1\to L^1$ endpoint estimate: estimate off the exceptional set}
As in the previous section, throughout this section $\kappa_{\Omega}=1/2$. We again note that we will often continue to write $\kappa_{\Omega}$ instead of subsituting $1/2$ simply to indicate how certain quantities in our estimates arise. We have shown that to prove that the operator $S$ maps $a_Q$ into $L^1$, we may ignore the term $\sum_{k: k\ge l}\sum_jS^{\prime}_{l, k, j}$ in (\ref{rep}). All other terms in (\ref{rep}) map $a_Q$ to a function that is supported off the exceptional set. In summary, we have shown that Theorem \ref{symbol2} reduces to proving Proposition \ref{offprop}, and so this section will be devoted to proving Proposition \ref{offprop}.
\subsection*{The case $k\ge l$}
To prove (\ref{kglest}), we will first prove the following lemma.
\begin{lemma}\label{shortlemma}
Let $\tilde{L}_{l, k, j}$ be as defined previously. Then
\begin{align}\label{tildaest}
\sum_{k\ge l}\sum_j\int|\tilde{L}_{l, k, j}(x)|\,dx\lesssim 1.
\end{align}
\end{lemma}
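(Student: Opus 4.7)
The plan is to derive an $L^1$ bound on each piece $\tilde L_{l,k,j}$ by integration by parts in the radial variable $s$, then sum exploiting the improved counting estimate (\ref{newcard}). The key geometric observation is that on the support of $\tilde L_{l,k,j}(x)$ the factor $1-\phi_0(2^l(\alpha x_1+\gamma(\alpha)x_2+1))$ forces $|u_2(\alpha)| := |\alpha x_1+\gamma(\alpha)x_2+1| \gtrsim 2^{-l}$, while $\Phi_{l,j,0}(x,\alpha)$ forces $|u_1(\alpha)| := |x_1+\gamma^{\prime}(\alpha)x_2| \lesssim |I_j|^{-1}2^{-l}$. This lower bound on $|u_2|$ is exactly what makes integration by parts in $s$ favorable.

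Since $a$ is a symbol of order $-1/2$, one has $|\partial_s^N(s\, a(s)\theta_k(s))| \lesssim 2^{k(1/2-N)}$ on $s\approx 2^k$, so $N$ integrations by parts of the inner $s$-integral in (\ref{L2}) yield
\begin{equation*}
\Bigl|\int_0^\infty e^{isu_2(\alpha)}\, s\, a(s)\,\theta_k(s)\, ds\Bigr| \lesssim 2^{3k/2}\bigl(1+2^k|u_2(\alpha)|\bigr)^{-N}.
\end{equation*}
I would then apply the change of variables (\ref{invcoord}) (whose Jacobian is $\approx 1$) to integrate the resulting pointwise bound in $x$. The $u_1$-integration contributes $\approx |I_j|^{-1}2^{-l}$, the $u_2$-integration over $\{|u_2|\gtrsim 2^{-l}\}$ weighted by $(1+2^k|u_2|)^{-N}$ contributes $\approx 2^{l-2k}$ upon taking $N=2$, and the $\alpha$-integration over $I_j^{\ast}$ contributes $\approx |I_j|$. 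The factor $|I_j|$ exactly cancels $|I_j|^{-1}$, producing the uniform bound
\begin{equation*}
\int |\tilde L_{l,k,j}(x)|\, dx \lesssim 2^{-k/2}
\end{equation*}
for every $j$ and every $k\ge l$.

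It remains to sum over $j$ and $k\ge l$. The geometric series in $k$ gives $\sum_{k\ge l} 2^{-k/2} \lesssim 2^{-l/2}$, so the total is bounded by $2^{-l/2}\cdot\#\{I_j\}$. This is where the hypothesis $\kappa_\Omega = 1/2$ enters crucially: the improved counting bound (\ref{newcard}) from Lemma \ref{endpointlemma} gives $\#\{I_j\} = \text{card}(\tilde{\mathfrak{A}}(2^{-l})) \lesssim 2^{l/2}$ without any logarithmic loss, and the two factors balance to produce the claimed $O(1)$ estimate.

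The main obstacle is precisely this logarithm-free counting. Under the general bound (\ref{count}) one would have $\#\{I_j\} \lesssim l^{2}\cdot 2^{l/2}$, and the extra $l^{2}$ would not be absorbed by the decay $2^{-k/2}$. The sharper estimate (\ref{newcard}), available only when $\kappa_\Omega = 1/2$, is therefore essential; this structural fact is exactly why Theorem \ref{symbol2} and the present lemma are restricted to that case.
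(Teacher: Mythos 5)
Your argument is correct and is essentially the paper's own proof: integrate by parts in $s$ (the paper does it three times, giving the slightly sharper per-piece bound $2^{l-k}2^{-k\kappa_{\Omega}}$ versus your $2^{-k/2}$ with $N=2$, but either suffices for the double sum), apply the change of variables (\ref{invcoord}), and sum in $j$ and $k$ using the logarithm-free counting bound (\ref{newcard}) available since $\kappa_{\Omega}=1/2$. No gaps.
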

\begin{proof}[Proof of Lemma \ref{shortlemma}]
Integrating by parts (\ref{L2}) three times with respect to $s$ yields
\begin{multline*}
\int|\tilde{L}_{l, k, j}(x)|\,dx\lesssim 
\\2^{k(1-\kappa_{\Omega})}\int_{{I_j}^{\ast}}\int_{\substack{|x_1+x_2\gamma^{\prime}(\alpha)|\le |I_j|^{-1}2^{-l}\\|\alpha x_1+\gamma(\alpha) x_2+1|\gtrsim 2^{-l}}}\frac{2^k}{(1+2^k|\alpha x_1+\gamma(\alpha) x_2+1|)^{3}}\,dx\,d\alpha.
\end{multline*}
Applying the change of coordinates (\ref{invcoord}) yields
\begin{multline*}
\int|\tilde{L}_{l, k, j}(x)|\,dx\lesssim 
2^{k(1-\kappa_{\Omega})}\int_{I_j^{\ast}}\int_{\substack{|u_1|\le |I_j|^{-1}2^{-l}\\|u_2|\gtrsim 2^{-l}}}\frac{2^k}{(1+2^k|u_2|)^{3}}\,du_1\,du_2\,d\alpha
\\
\lesssim 2^{l-k}2^{-k\kappa_{\Omega}}.
\end{multline*}
By (\ref{newcard}), there are $\lesssim 2^{l/2}$ intervals $I_j$, so we may sum in $j$ and then in $k$ to obtain (\ref{tildaest}).
\end{proof}
To prove (\ref{kglest}), it remains to prove
\begin{lemma}\label{longlemma}
Let $S_{l, k, j, n}$ be as defined previously. Then
\begin{align}\label{notildaest}
\Norm{\big(\sum_{k: k\ge l}\sum_j\sum_{n: n>{k-l}}S_{l, k, j, n}(a_Q)}_{L^1(\mathbb{R}^2)}\lesssim 1.
\end{align}
\end{lemma}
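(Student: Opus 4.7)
Plan: The plan is to exploit the vanishing moment $\int a_Q=0$ by writing
$$S_{l,k,j,n}(a_Q)(x) = \int\bigl[L_{l,k,j,n}(x-y)-L_{l,k,j,n}(x-y_Q)\bigr]a_Q(y)\,dy,$$
so that the mean value theorem and Fubini yield
$$\|S_{l,k,j,n}(a_Q)\|_{L^1}\lesssim 2^{-l}\int|\nabla L_{l,k,j,n}(x)|\,dx,$$
since $|y-y_Q|\lesssim 2^{-l}$ throughout the support of $a_Q$. The task then becomes to estimate $\int|\nabla L_{l,k,j,n}|$ in a form that, after multiplication by $2^{-l}$, is summable over $n>k-l$, $j$, and $k\ge l$.

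The decisive observation is that in the $(u_1,u_2)$-coordinates adapted to $(\alpha_j,\gamma(\alpha_j))$ of (\ref{invcoord}), the oscillating phase $s(\alpha x_1+\gamma(\alpha)x_2+1)$ is independent of $u_1$, so $\partial_{u_1}L_{l,k,j,n}$ only hits the spatial cutoffs $\Phi_{k,j,n}$ and $\phi_0(2^{-6M}|x|)$, picking up a factor of order $|I_j|2^{k-n}+1$, which is small precisely because $n>k-l$. Running the same integration-by-parts scheme as in the proof of Proposition \ref{mainprop2} (once in $\alpha$, twice in $s$) and tracking this extra factor, I would obtain a bound on $\int|\partial_{u_1}L_{l,k,j,n}|$ whose sum over $j$ is controlled by the endpoint cardinality improvement (\ref{newcard})---which gives $\text{card}(\tilde{\mathfrak{A}}(2^{-l}))\lesssim 2^{l/2}$, valid only when $\kappa_\Omega=1/2$---together with (\ref{sumconstraint}); the subsequent geometric summation over $n>k-l$ and $k\ge l$ then produces a bounded contribution.

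For the normal direction $u_2$, $\partial_{u_2}$ of the phase yields the large factor $s\sim 2^k$, and the naive bound $2^{-l}\cdot 2^k\int|L_{l,k,j,n}|$ diverges when summed over $k\ge l$. I would handle the $u_2$-part of $y-y_Q$ separately by exploiting the fact that $L_{l,k,j,n}$ is essentially supported in a thin strip of $u_2$-width $\sim 2^{-k}$ (from integration by parts in $s$), so that $\|L_{l,k,j,n}\|_{L^\infty}\lesssim 2^{k/2}$ on this strip. Combining this thin-strip geometry with Cauchy--Schwarz on the effective support of $S_{l,k,j,n}(a_Q)$, Plancherel, and $L^2$-orthogonality between the angular sectors indexed by $j$ (which have disjoint frequency support up to bounded overlap) allows one to absorb the $2^k$ loss.

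Main obstacle: The principal difficulty is precisely controlling the $u_2$-direction contribution, since $\partial_{u_2}$ of the phase costs a full factor $2^k$ and the direct estimate $2^{-l}\int|\partial_{u_2}L_{l,k,j,n}|\lesssim 2^{k-l}\int|L_{l,k,j,n}|$ grows unboundedly in $k$. Overcoming this loss requires merging the $H^1$ atomic cancellation with both the thin-strip $L^2$ geometry of each $L_{l,k,j,n}$ in the normal direction and the almost-orthogonality across angular pieces, and it is exactly the endpoint cardinality bound (\ref{newcard}) that makes the resulting series summable over $n>k-l$, $j$, and $k\ge l$.
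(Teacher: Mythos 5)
Your plan has a genuine gap at its central step. The ``decisive observation'' --- that in the coordinates (\ref{invcoord}) adapted to the fixed point $(\alpha_j,\gamma(\alpha_j))$ the phase is independent of $u_1$, so that $\partial_{u_1}L_{l,k,j,n}$ only hits the cutoffs and costs $|I_j|2^{k-n}+1$ --- is false. The phase $s(\alpha x_1+\gamma(\alpha)x_2+1)$ is annihilated by the $u_1$-derivative only when $\alpha=\alpha_j$; for general $\alpha\in I_j^{\ast}$ the $u_1$-directional derivative of the phase equals $s\,(\alpha\gamma(\alpha_j)-\alpha_j\gamma(\alpha))/(\gamma(\alpha_j)-\alpha_j\gamma^{\prime}(\alpha_j))$, which is of size up to $2^{k}|I_j|$. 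After multiplying by the $2^{-l}$ gained from the atom's cancellation, this term contributes roughly $2^{k-l}|I_j|$ times the $L^1$ size of the piece, and the sums over $j$, $n>k-l$ and $k\ge l$ then diverge. The normal ($u_2$) component, which you yourself identify as the main obstacle, is only sketched: the claimed $L^{\infty}$ bound $2^{k/2}$ is not justified, the $n$-localization $\Phi_{k,j,n}$ is a \emph{spatial} cutoff so the pieces are not frequency-disjoint in any exact sense, and the Cauchy--Schwarz step pays $|E|^{1/2}$ with $|E|\approx 2^{n-k}|I_j|^{-1}$, which \emph{grows} with $n$, so you would still need genuine $L^2$ decay in $n$ that is nowhere established. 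In short, the crux of the lemma --- decay in $n$ beyond the trivial kernel bound, without which the sums over $n$, $j$, $k$ give at best a logarithmic divergence --- is not obtained by your argument.

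For comparison, the paper does not use the cancellation of $a_Q$ at all for $k\ge l$ (that trick is reserved for $k<l$, Lemma \ref{klemma}, where $2^{-l}2^{k}$ is favorable); it only uses $\Norm{a_Q}_{L^1}\lesssim 1$ and proves pure $L^1$ kernel bounds with decay in $n$. The gain comes from integrating by parts \emph{twice} in $\alpha$, turning the distance $|u_1|\approx 2^{n-k}|I_j|^{-1}$ from the singular set into a factor $|u_1|^{-2}$. Since a second $\alpha$-integration by parts brings in $\gamma^{\prime\prime}$ and $\gamma^{\prime\prime\prime}$, which are uncontrolled for a general convex domain, the paper first replaces $\gamma$ by the smooth approximant $\gamma_k$ of Lemma \ref{approxlemma2}, whose curvature is $\lesssim 2^{-k}|J_m|^{-2}$ on each cap $J_m$ of the scale-$2^{-k}$ decomposition, proves the error estimate of Lemma \ref{errorlemma} (summable thanks to (\ref{newcard}) and the vanishing of terms with $2^{n-k}|I_j|^{-1}\gg 1$), and then bounds the main term $B_{l,k,j,n}$ in Lemma \ref{maintermlemma}, using a Cauchy--Schwarz count of the caps $J_m$ meeting $I_j^{\ast}$ to control the resulting curvature integrals. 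Your proposal contains no substitute for this mechanism, so as it stands it does not prove (\ref{notildaest}).
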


Recall our treatment of the kernels $K_{k, j, n}$ in Section \ref{l1kerest}. In order to achieve sufficient decay in $n$ for $\int|K_{k, j, n}(x)|\,dx$ to prove an endpoint estimate, we would have had to integrate by parts twice in the $\alpha$ variable. However, doing so would make our estimates for $\int|K_{k, j, n}(x)|\,dx$ ultimately depend on the $C^2$ norm of the graph of $\partial\Omega$. Thus in our analysis of the kernels of the operators $S_{l, k, j, n}$, we will instead opt to approximate $\partial\Omega$ by a smooth curve whose curvature is essentially constant on ``Minkowski caps" of width $2^{-k}$, allowing us to perform the necessary integration by parts.
\newline
\indent 
Recall that $\{I_j\}=\{[b_j, b_{j+1}]\}$ is the partition of $[-1, 1]$ into subintervals with endpoints in $\tilde{\mathfrak{A}}(2^{-l})$, where $\tilde{\mathfrak{A}}(2^{-l})$ is the refinement of $\mathfrak{A}(2^{-l})$ given by Lemma \ref{endpointlemma}. Fix $k\ge l$, and let $\{J_m\}=\{[c_m, c_{m+1}]\}$ be the partition of $[-1, 1]$ into subintervals with endpoints in $\mathfrak{A}(2^{-k})$. We will prove the following approximation lemma.
\begin{lemma}\label{approxlemma2}
Fix integers $l, k\ge 0$ with $k\ge l$, and define $\{I_j\}$ and $\{J_m\}$ as above. Then there exists a smooth function $\gamma_k: [-1, 1]\to\mathbb{R}$ such that for every $x\in\mathfrak{A}(2^{-k})$,
\begin{align}\label{p1}
\gamma_k(x)=\gamma(x),
\end{align}
\begin{align}\label{p2}
\gamma_k^{\prime}(x)=\gamma^{\prime}(x),
\end{align}
and for every $\alpha\in J_m$,
\begin{align}\label{p3}
|\gamma_k^{\prime\prime}(\alpha)|\lesssim(\gamma^{\prime}(c_{m+1})-\gamma^{\prime}(c_m))|J_m|^{-1}\lesssim 2^{-k}|J_m|^{-2},
\end{align}
and 
\begin{align}\label{p4}
\int_{J_m}|\gamma_k^{\prime\prime\prime}(\alpha)|\,d\alpha\lesssim 2^{-k}|J_m|^{-2}.
\end{align}
Moreover, for every $j$,
\begin{align}\label{p7}
\int_{I_j^{\ast}}|I_j||\gamma_k^{\prime\prime}(\alpha)|\,d\alpha\lesssim 2^{-l}
\end{align}
and for any $\alpha\in I_j^{\ast}$,
\begin{align}\label{p8}
|\gamma_k^{\prime}(\alpha)-\gamma^{\prime}(\alpha)|\lesssim 2^{-l}|I_j|^{-1}.
\end{align}
\end{lemma}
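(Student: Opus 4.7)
The strategy is to build $\gamma_k$ as the antiderivative of a smooth function $g_k$ that interpolates $\gamma'$ at the partition $\mathfrak{A}(2^{-k})$ and whose integrals over each $J_m$ match those of $\gamma'$. Fix once and for all a smooth step function $\phi:[0,1]\to[0,1]$ with $\phi(0)=0$, $\phi(1)=1$, and all derivatives vanishing at $0$ and $1$, together with a smooth bump $\psi\in C_c^\infty((1/3,2/3))$ with $\int_0^1\psi=1$. On each $J_m=[c_m,c_{m+1}]$ I define
\begin{equation*}
g_k(\alpha):=\gamma'(c_m)+\bigl(\gamma'(c_{m+1})-\gamma'(c_m)\bigr)\,\phi\!\bigl(\tfrac{\alpha-c_m}{|J_m|}\bigr)+\epsilon_m\,\psi\!\bigl(\tfrac{\alpha-c_m}{|J_m|}\bigr),
\end{equation*}
with $\epsilon_m$ the unique constant making $\int_{J_m}g_k=\gamma(c_{m+1})-\gamma(c_m)$. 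Because $\phi$ and all its derivatives vanish at the endpoints of $[0,1]$ while $\psi$ is supported in the interior, the local pieces of $g_k$ patch together to a function in $C^\infty([-1,1])$. Setting $\gamma_k(\alpha):=\gamma(-1)+\int_{-1}^\alpha g_k(t)\,dt$, the defining identities for $g_k$ at each $c_m\in\mathfrak{A}(2^{-k})$ immediately yield (\ref{p2}); telescoping the integral identities gives (\ref{p1}).

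I then record the size of the correction: computing $\int_{J_m}g_k$ explicitly and using monotonicity of $\gamma'$ (so $\gamma'(c_m)\le \gamma'(\alpha)\le\gamma'(c_{m+1})$ on $J_m$) yields $|\epsilon_m|\le 2(\gamma'(c_{m+1})-\gamma'(c_m))$. Since $\gamma_k''=g_k'$ and $\gamma_k'''=g_k''$, differentiating the formula for $g_k$ gives
\begin{align*}
|\gamma_k''(\alpha)|&\lesssim \frac{\gamma'(c_{m+1})-\gamma'(c_m)}{|J_m|}\qquad(\alpha\in J_m),\\
\int_{J_m}|\gamma_k'''(\alpha)|\,d\alpha&\lesssim \frac{\gamma'(c_{m+1})-\gamma'(c_m)}{|J_m|}.
\end{align*}
Both estimates upgrade to $\lesssim 2^{-k}|J_m|^{-2}$ upon inserting (\ref{left}), which (since we may assume $\gamma$ is smooth by the reduction of Section~\ref{l1kerest}) reads $|J_m|\,(\gamma'(c_{m+1})-\gamma'(c_m))\le 2^{-k}$. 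This produces (\ref{p3}) and (\ref{p4}).

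The remaining claims (\ref{p7}) and (\ref{p8}) both reduce to the single key estimate
\begin{equation*}
\sum_{m:\,J_m\cap I_j^\ast\neq\emptyset}\bigl(\gamma'(c_{m+1})-\gamma'(c_m)\bigr)\lesssim 2^{-l}|I_j|^{-1},
\end{equation*}
which I prove by a dichotomy on the size of $J_m$ relative to $|I_j|$. The finitely many $J_m$ with $|J_m|\gtrsim|I_j|$ meeting $I_j^\ast$ are bounded term by term via (\ref{left}): $\gamma'(c_{m+1})-\gamma'(c_m)\le 2^{-k}|J_m|^{-1}\le 2^{-l}|I_j|^{-1}$ (using $k\ge l$). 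The remaining $J_m$'s are short enough to lie in $I_{j-1}\cup I_j\cup I_{j+1}$, so their contributions telescope to at most the total variation of $\gamma'$ over that union, which is $\lesssim 2^{-l}|I_j|^{-1}$ by (\ref{slope}) combined with the dyadic comparability (\ref{coomp}). Granted the key estimate, (\ref{p7}) follows by integrating the bound on $|\gamma_k''|$ over each $J_m\cap I_j^\ast$, multiplying by $|I_j|$, and summing; and (\ref{p8}) follows by writing, for $\alpha\in J_m\cap I_j^\ast$, $\gamma_k'(\alpha)-\gamma'(\alpha)=(\gamma_k'(\alpha)-\gamma_k'(c_m))-(\gamma'(\alpha)-\gamma'(c_m))$, then bounding both differences by $\gamma'(c_{m+1})-\gamma'(c_m)$ (via monotonicity of $\gamma'$ and the $\gamma_k''$-bound) and applying the dichotomy to the single index $m\ni\alpha$.

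The main obstacle I anticipate is the key estimate itself: one must simultaneously exploit the defining property of the coarser partition $\mathfrak{A}(2^{-k})$ (through (\ref{left})) and the refined geometry of $\tilde{\mathfrak{A}}(2^{-l})$ (through (\ref{slope}) and (\ref{coomp})), while remaining insensitive to $J_m$'s that straddle $\partial I_j^\ast$. The large-versus-small dichotomy above is the cleanest way I see to reconcile the two scales.
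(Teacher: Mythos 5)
Your construction is correct, and it takes a genuinely different route from the paper's. The paper builds $\gamma_k$ geometrically: near each $x\in\mathfrak{A}(2^{-k})$ the graph of $\gamma_k$ is a line segment through $(x,\gamma(x))$ with slope $\gamma^{\prime}(x)$, consecutive segments are joined by arcs of constant curvature, and the resulting piecewise-smooth curve is mollified at scale $\approx|J_m|$; the bounds (\ref{p3})--(\ref{p4}) then come from the mollifier estimates, while (\ref{p7}) and (\ref{p8}) are verified by a case analysis comparing the extreme points of $\mathfrak{A}(2^{-k})$ meeting $I_j^{\ast}$ with the endpoints $b_j^{\ast}$, $b_{j+1}^{\ast}$. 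You instead work at the level of the derivative: your $g_k$ interpolates $\gamma^{\prime}(c_m)$ to $\gamma^{\prime}(c_{m+1})$ on each $J_m$ by a fixed smooth step, and the bump correction $\epsilon_m\psi$ calibrated by $\int_{J_m}g_k=\gamma(c_{m+1})-\gamma(c_m)$ plays the role of the paper's connecting arcs and mollification, so (\ref{p1})--(\ref{p2}) are exact by construction and (\ref{p3})--(\ref{p4}) follow at once from $|\epsilon_m|\le 2(\gamma^{\prime}(c_{m+1})-\gamma^{\prime}(c_m))$ (convexity) together with (\ref{left}). Your reduction of (\ref{p7}) and (\ref{p8}) to the single estimate $\sum_{m:\,J_m\cap I_j^{\ast}\ne\emptyset}(\gamma^{\prime}(c_{m+1})-\gamma^{\prime}(c_m))\lesssim 2^{-l}|I_j|^{-1}$, proved by the dichotomy on $|J_m|$ versus $|I_j|$ (large $J_m$: (\ref{left}) and $k\ge l$; small $J_m$: containment in $I_{j-1}\cup I_j\cup I_{j+1}$, telescoping by monotonicity of $\gamma^{\prime}$, then (\ref{slope}) and (\ref{coomp})), is more uniform than the paper's endpoint bookkeeping and in fact mirrors the splitting the paper uses later in (\ref{c1})--(\ref{c3}) and in Lemma \ref{maintermlemma}. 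What your route gives up is only cosmetic: your $\gamma_k$ need not be convex, but nothing downstream (Lemma \ref{errorlemma}, Lemma \ref{maintermlemma}, Remark \ref{randrem}) uses anything beyond (\ref{p1})--(\ref{p8}). Two small polish points: fix the dichotomy threshold explicitly (say $|J_m|\ge |I_j|/100$ or not) so that the small $J_m$ meeting $I_j^{\ast}$ genuinely lie in $I_{j-1}\cup I_j\cup I_{j+1}$ via (\ref{coomp}); and the reduction to smooth $\gamma$ relevant for this lemma is the one at the start of Section \ref{end}, not Section \ref{l1kerest}.
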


\begin{remark}\label{randrem}
Note that (\ref{p2}) and (\ref{p3}) imply that for every $\alpha\in J_m$,
\begin{multline}\label{p5}
|\gamma(\alpha)-\gamma_k(\alpha)|\lesssim\int_{J_m}|\gamma^{\prime}(\alpha)-\gamma_k^{\prime}(\alpha)|\,d\alpha
\\
\lesssim\int_{J_m}\int_{c_m}^{\alpha}(|\gamma^{\prime\prime}(t)|+|\gamma_k^{\prime\prime}(t)|)\,dt\,d\alpha\lesssim (\gamma^{\prime}(c_{m+1})-\gamma^{\prime}(c_m))|J_m|\lesssim 2^{-k},
\end{multline}
and
\begin{multline}\label{p6}
|\gamma^{\prime}(\alpha)-\gamma_k^{\prime}(\alpha)|\lesssim \int_{J_m}(|\gamma^{\prime\prime}(\alpha)|+|\gamma_k^{\prime\prime}(\alpha)|)\,d\alpha
\\
\lesssim \gamma^{\prime}(c_{m+1})-\gamma^{\prime}(c_m)\lesssim 2^{-k}|J_m|^{-1}.
\end{multline}

\end{remark}

\begin{proof}[Proof of Lemma \ref{approxlemma2}]

The idea of the construction is to first define $\gamma_k$ near each point $x\in\mathfrak{A}(2^{-k})$ so that its graph is a line segment with slope $\gamma^{\prime}(x)$, to connect these line segments with curves of constant curvature, and then to smooth things out using an appropriate mollifier. We now proceed to give the details. 
\newline
\indent
We first define $\gamma_k$ in a neighborhood of each $x\in\mathfrak{A}(2^{-k})$. For each such $x$, let $J_{m(x)}$ be the element of $\{J_m\}$ whose right endpoint is $x$. Let $O_x$ be the interval $[x-\frac{|J_{m(x)}|}{100}, x+\frac{|J_{m(x)+1}|}{100}]$. Define a function $\gamma_{k, x}$ on $O_x$ so that $\{(\alpha, \gamma_{k, x}(\alpha)):\,\alpha\in O_x\}$ is the graph of a line segment satisfying $\gamma_{k, x}(x)=\gamma(x)$ and $\gamma_{k, x}^{\prime}(x)=\gamma^{\prime}(x)$. Let $x^+$ be the successor of $x$ in $x\in\mathfrak{A}(2^{-k})$. We now extend $\gamma_{k, x}$ to $\tilde{O}_x:=[x-\frac{|J_{m(x)}|}{100}, x^+-\frac{|J_{m(x)+1}|}{100}]$ by connecting the points 
\begin{align}\label{twopoints}
\bigg(x+\frac{|J_{m(x)+1}|}{100}, \gamma(x+\frac{|J_{m(x)+1}|}{100})\bigg); \bigg(x^+-\frac{|J_{m(x)+1}|}{100}, \gamma(x^+-\frac{|J_{m(x)+1}|}{100})\bigg)
\end{align} 
by the unique curve of constant curvature that has slope $\gamma^{\prime}(x)$ at the point 
\begin{align*}
\bigg(x+\frac{|J_{m(x)+1}|}{100}, \gamma_{k, x}(x+\frac{|J_{m(x)+1}|}{100})\bigg).
\end{align*} 
Note that for $\alpha$ between the two points (\ref{twopoints}), 
\begin{align}\label{t1}
|\gamma_{k, x}^{\prime\prime}(\alpha)|\lesssim (\gamma^{\prime}(c_{m+1})-\gamma^{\prime}(c_m))|J_m|^{-1}\lesssim 2^{-k}|J_{m(x)+1}|^{-2}.
\end{align}
Now define a piecewise smooth curve $\tilde{\gamma}_k:[-1, 1]\to\mathbb{R}$ by $\tilde{\gamma_k}|_{\tilde{O}_x}=\gamma_{k, x}$. 
\newline
\indent
For each $x\in\mathfrak{A}(2^{-k})$, let $U_x=[x+\frac{|J_{m(x)+1}|}{200}, x^+-\frac{|J_{m(x)+1}|}{200}]$. Let $\psi_{x}$ be a smooth positive bump function supported in 
\begin{align*}
\bigg[-\frac{|J_{m(x)+1}|}{800}, \frac{|J_{m(x)+1}|}{800}\bigg]
\end{align*} 
with $\int\psi_{x}=1$ and satisfying
\begin{align}\label{betaest}
D^{\beta}\psi_{x}\lesssim_{\beta} |J_{m(x)+1}|^{-\beta-1},\qquad\beta\ge 0\text{ an integer}.
\end{align}
Define a smooth curve $\gamma_k: [-1, 1]\to\mathbb{R}$ by $\gamma_k|_{U_x}=\tilde{\gamma}_k\ast\psi_{x}$ and $\gamma_k|_{(\bigcup_{x}U_{x})^c}=\tilde{\gamma}_k$. 
\newline
\indent
By construction, $\gamma_k$ satisfies (\ref{p1}) and (\ref{p2}). On $(\bigcup_xU_x)^c$, $\gamma_k^{\prime\prime}$ is identically $0$.  Let $\tilde{\gamma}_k^{\prime\prime}$ denote the a.e. defined pointwise second derivative of $\tilde{\gamma}_k$. Let $\tilde{\gamma}_{k, L}^{\prime}$ and $\tilde{\gamma}_{k, R}^{\prime}$ denote the (everywhere defined) left and right derivatives of $\tilde\gamma_k$, respectively. Then for $\alpha\in U_x\subset J_{m(x)+1}$, 
\begin{multline}\label{rand1}
|\gamma_k^{\prime\prime}(\alpha)|\lesssim|(\tilde{\gamma}_k^{\prime\prime}\ast\psi_x)(\alpha)|+|\tilde{\gamma}_{k, R}^{\prime}(x^+-\frac{|J_{m(x)+1}|}{100})-\tilde{\gamma}_{k, L}^{\prime}(x^+-\frac{|J_{m(x)+1}|}{100})|\Norm{\psi_x}_{\infty}
\\
\lesssim \sup_{\alpha\in U_x}|\tilde{\gamma}_k^{\prime\prime}(\alpha)|+(\gamma^{\prime}(c_{m+1})-\gamma^{\prime}(c_m))|J_m|^{-1}
\\
\lesssim (\gamma^{\prime}(c_{m+1})-\gamma^{\prime}(c_m))|J_m|^{-1}
\lesssim 2^{-k}|J_{m(x)+1}|^{-2}, 
\end{multline}
where in the second to last inequality we have used (\ref{t1}). Thus $\gamma_k$ satisfies (\ref{p3}). By (\ref{t1}) and (\ref{betaest}), we also have
\begin{multline*}
\int_{J_{m(x)+1}}|\gamma_k^{\prime\prime\prime}(\alpha)|\,d\alpha\lesssim\int_{U_x}|(\tilde{\gamma}_k^{\prime\prime}\ast\psi_x^{\prime})(\alpha)|\,d\alpha
\\
+|\tilde{\gamma}_{k, R}^{\prime}(x^+-\frac{|J_{m(x)+1}|}{100})-\tilde{\gamma}_{k, L}^{\prime}(x^+-\frac{|J_{m(x)+1}|}{100})|\Norm{\psi_x^{\prime}}_{\infty}|J_{m(x)+1}|
\\
\lesssim \int_{J_{m(x)+1}}2^{-k}|J_{m(x)+1}|^{-3}\,d\alpha+2^{-k}|J_{m(x)+1}|^{-2}\lesssim 2^{-k}|J_{m(x)+1}|^{-2},
\end{multline*}
and so $\gamma_k$ satisfies (\ref{p4}). 
\newline
\indent
Now we show that $\gamma_k$ satisfies (\ref{p7}). Note that (\ref{p3}) implies that for each $m$,
\begin{align}\label{tempy}
\int_{J_m}|\gamma_k^{\prime\prime}(\alpha)|\,d\alpha\lesssim\gamma^{\prime}(c_{m+1})-\gamma^{\prime}(c_m).
\end{align}
Given $I_j=[b_j, b_{j+1}]$, choose $m, m^{\prime}$ to the the greatest and least integers, respectively, so that $I_j^{\ast}\subset[c_{m}, c_{m^{\prime}}]$. Let $b_j^{\ast}$ and $b_{j+1}^{\ast}$ denote the left and right endpoints of $I_j^{\ast}$, respectively. If $b_j^{\ast}-c_m\le |I_j|/100$, then by (\ref{coomp}) we have $b_{j-1}\le c_m$, so by (\ref{tempy}) we have
\begin{align*}
\int_{I_j^{\ast}}|\gamma_k^{\prime\prime}(\alpha)|\,d\alpha\lesssim\gamma^{\prime}(c_{m^{\prime}})-\gamma^{\prime}(c_m)\lesssim\gamma^{\prime}(c_{m^{\prime}})-\gamma^{\prime}(b_{j-1}).
\end{align*}
Otherwise, $b_j^{\ast}-c_m >|I_j|/100$, and so (\ref{slope}) implies that
\begin{align*}
\gamma^{\prime}(b_j^{\ast})-\gamma^{\prime}(c_m)\lesssim 2^{-k}|I_j|^{-1}
\end{align*}
and hence
\begin{align*}
\int_{I_j^{\ast}}|\gamma_k^{\prime\prime}(\alpha)|\,d\alpha\lesssim \gamma^{\prime}(c_{m^{\prime}})-\gamma^{\prime}(c_m)\lesssim\gamma^{\prime}(c_{m^{\prime}})-\gamma^{\prime}(b_{j^{\ast}})+2^{-k}|I_j|^{-1}.
\end{align*}
In either case, we have
\begin{align*}
\int_{I_j^{\ast}}|\gamma_k^{\prime\prime}(\alpha)|\,d\alpha\lesssim\gamma^{\prime}(c_{m^{\prime}})-\gamma^{\prime}(b_{j-1})+2^{-k}|I_j|^{-1}.
\end{align*}
Arguing similarly with $c_{m^{\prime}}$ and $b_{j+1}^{\ast}$ in place of $c_m$ and $b_j^{\ast}$, we may obtain
\begin{align*}
\int_{I_j^{\ast}}|\gamma_k^{\prime\prime}(\alpha)|\,d\alpha\lesssim\gamma^{\prime}(b_{j+1})-\gamma^{\prime}(b_{j-1})+2^{-k}|I_j|^{-1}.
\end{align*}
By (\ref{slope}) and (\ref{coomp}), $\gamma^{\prime}(b_{j+1})-\gamma^{\prime}(b_{j-1})\lesssim 2^{-l}|I_j|^{-1}$, and since $k\ge l$ it follows that
\begin{align*}
\int_{I_j^{\ast}}|I_j||\gamma_k^{\prime\prime}(\alpha)|\,d\alpha\lesssim 2^{-l}.
\end{align*}
Thus $\gamma_k$ satisfies (\ref{p7}).
\newline
\indent
Finally, we show that $\gamma_k$ satisfies (\ref{p8}). Suppose we are given some $j$ and some $\alpha\in I_j^{\ast}$. If there exists $m$ such that $c_m\in I_j^{\ast}$, then by (\ref{p1}) and (\ref{p7}),
\begin{align*}
|\gamma_k^{\prime}(\alpha)-\gamma^{\prime}(\alpha)|\lesssim \int_{I_j^{\ast}}(|\gamma_k^{\prime\prime}(\alpha)|+|\gamma^{\prime\prime}(\alpha)|)\,d\alpha\lesssim 2^{-l}|I_j|^{-1}.
\end{align*}
Otherwise, choose $m$ so that the distance of $c_m$ from $I_j^{\ast}$ is minimal. Without loss of generality, suppose $c_m<b_{j}^{\ast}$. Then $c_{m+1}-c_m\gtrsim |I_j|$, so by (\ref{p3}) and (\ref{p7}),
\begin{multline*}
|\gamma_k^{\prime}(\alpha)-\gamma^{\prime}(\alpha)|\lesssim \int_{[c_{m}, c_{m+1}]\cup I_j^{\ast}}(|\gamma_k^{\prime\prime}(\alpha)|+|\gamma^{\prime\prime}(\alpha)|)\,d\alpha
\\
\lesssim 2^{-k}|I_j|^{-1}+2^{-l}|I_j|^{-1}\lesssim 2^{-l}|I_j|^{-1},
\end{multline*}
and hence $\gamma_k$ satisfies (\ref{p8}).
\end{proof}

\subsection*{The error estimate}
Define
\begin{multline*}
{B}_{l, k, j, n}(x)=\phi_0(2^{-6M}|x|)\int_0^{\infty}\int_{I_j^{\ast}}e^{is(\alpha x_1+\gamma_k(\alpha)x_2+1)}
\\
\beta_{I_j}(\alpha)\Phi_{k, j, n}(x, \alpha)\theta_k(s)a(s)s(\alpha\gamma_k^{\prime}(\alpha)-\gamma_k(\alpha))\,d\alpha\,ds.
\end{multline*}
Note that $B_{l, k, j, n}$ is like $L_{l, k , j, n}$ with every occurrence of $\gamma$ in the integral replaced by $\gamma_k$. We will prove
\begin{lemma}\label{errorlemma}
If $k\ge l$ and $n>k-l$, then
\begin{align}\label{diffest}
\Norm{L_{l, k, j, n}-B_{l, k, j, n}}_{L^1(\mathbb{R}^2)}\lesssim 2^{-k\kappa_{\Omega}}(2^{n-k}|I_j|^{-1}).
\end{align}
\end{lemma}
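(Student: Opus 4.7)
The plan is to interpolate linearly between $\gamma$ and $\gamma_k$, reducing the bound to an $L^1$ estimate on a $t$-derivative, and then to apply the same integration-by-parts scheme as in Section \ref{l1kerest} to that derivative, harvesting the factor $2^{n-k}|I_j|^{-1}$ from the closeness estimates $|\gamma-\gamma_k|\lesssim 2^{-k}$ and $|\gamma'-\gamma_k'|\lesssim 2^{-l}|I_j|^{-1}$ supplied by Lemma \ref{approxlemma2} and Remark \ref{randrem}.

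First, I would set $\gamma_t := (1-t)\gamma_k + t\gamma$ and define $L^t(x)$ via (\ref{L1}) with every occurrence of $\gamma$ replaced by $\gamma_t$ (in the phase, in the Jacobian factor $\alpha\gamma'-\gamma$, and in the cutoff $\Phi_{k,j,n}$ which depends on $\gamma'$). By (\ref{p5}), (\ref{p6}), (\ref{p8}) the interpolated graph is a convex curve satisfying the analogues of Lemma \ref{elemlemma} uniformly in $t$; in particular $|\alpha\gamma_t' - \gamma_t|\approx 1$ on $I_j^*$. Then $L_{l,k,j,n} - B_{l,k,j,n} = \int_0^1\partial_t L^t\,dt$, so it suffices to bound $\|\partial_t L^t\|_{L^1}$ uniformly in $t$. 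Next, the $t$-derivative splits into three pieces: (A) a phase contribution with prefactor $isx_2(\gamma-\gamma_k)$; (B) an amplitude contribution with prefactor $\alpha(\gamma'-\gamma_k')-(\gamma-\gamma_k)$; and (C) a cutoff contribution from $\partial_t\Phi_{k,j,n}$ with prefactor of the form $|I_j|2^{k-n}x_2(\gamma'-\gamma_k')\phi_0'(\cdot)$. For each of (A), (B), (C), I would perform the same one-$\alpha$-then-two-$s$ IBP used to control $L_{l,k,j,n}$ in Section \ref{l1kerest}, followed by the change of coordinates (\ref{invcoord}). The target factor $2^{n-k}|I_j|^{-1}$ emerges from the prefactor smallness: for (B) directly, since $|\alpha(\gamma'-\gamma_k')-(\gamma-\gamma_k)|\lesssim 2^{-l}|I_j|^{-1} \le 2^{n-k}|I_j|^{-1}$ because $n>k-l$; for (A) after one $s$-IBP absorbs the leading $is$, leaving an effective extra factor $|x_2(\gamma-\gamma_k)|\lesssim 2^{-k}$; and for (C) from the prefactor size $|I_j|2^{k-n}|\gamma'-\gamma_k'|\lesssim 2^{k-n-l}$ combined with the $u_1$-localization coming from $\phi_0'$.

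The main obstacle is the $\alpha$-IBP bookkeeping for term (C): the $\alpha$-derivative of the new $\phi_0'$-factor involves $\gamma''-\gamma_k''$, for which we only have integral control through (\ref{p3}) and (\ref{p7}). I would handle this by performing the $\alpha$-IBP on the full $L^t$ \emph{before} differentiating in $t$, so that (A), (B), (C) emerge from a single unified integral expression, and by further subdividing $I_j^*$ into the fine intervals $\{J_m\}$ of $\mathfrak{A}(2^{-k})$, on each of which (\ref{p3}) gives the pointwise bound $|\gamma_k''(\alpha)|\lesssim 2^{-k}|J_m|^{-2}$; then the $\gamma''$ contribution is treated via the ``$\int_{I_j^*}|I_j||\gamma''|\,d\alpha\lesssim 2^{-l}$'' mechanism from Section \ref{l1kerest}, summed carefully over $\{J_m\}$. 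I expect this is where the technical heart of the proof lies.
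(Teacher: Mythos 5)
Your reduction to a $t$-derivative via $\gamma_t=(1-t)\gamma_k+t\gamma$ is harmless and essentially equivalent to the paper's algebraic splitting, and your term (B) is exactly the paper's Jacobian-difference term $H_2$: the prefactor bound $|\alpha(\gamma'-\gamma_k')-(\gamma-\gamma_k)|\lesssim 2^{-l}|I_j|^{-1}\le 2^{n-k}|I_j|^{-1}$ together with the Section \ref{l1kerest} scheme does give the stated bound. Two remarks on (C): in the paper the cutoff $\Phi_{k,j,n}$ is kept the same in $L_{l,k,j,n}$ and $B_{l,k,j,n}$, so no such term arises; and even granting your reading, the bound your mechanism produces for (C) is of size $2^{-k\kappa_{\Omega}}2^{k-n-l}$, which is \emph{not} $\lesssim 2^{-k\kappa_{\Omega}}2^{n-k}|I_j|^{-1}$ when $|I_j|$ is of unit size and $n$ is close to $k-l$ (it is still summable, but it does not prove the lemma as stated).

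The genuine gap is term (A), the phase difference, which is where the paper spends almost all of its effort. An integration by parts in $s$ does not ``absorb'' the factor $is$ and leave behind $|x_2(\gamma-\gamma_k)|\lesssim 2^{-k}$: each $s$-integration by parts trades one factor of size $\approx 2^k$ for a factor $|1+\alpha x_1+\gamma(\alpha)x_2|^{-1}=|u_2|^{-1}$ in the coordinates (\ref{invcoord}), and this is no gain at all on the region $|u_2|\lesssim 2^{-k}$, which is precisely where the kernel mass sits. Moreover, on the support of these kernels $x$ lies near the singular set, so $|x|\approx|\nabla\rho|\approx 1$ and the extra amplitude factor $|s\,x_2(\gamma-\gamma_k)|\approx 2^k\cdot 2^{-k}\cdot|x_2|$ is genuinely of size one, not $2^{-k}$. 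Running your one-$\alpha$/two(or three)-$s$ scheme on (A) therefore yields only $\lesssim 2^{-k\kappa_{\Omega}}$ per triple $(k,j,n)$, with no factor $2^{n-k}|I_j|^{-1}$; this neither proves (\ref{diffest}) nor suffices downstream, since the summation in Remark \ref{conseq} then loses a factor of order $l$. The paper's treatment of this term is structurally different: it writes the difference as $(1-e^{is(\gamma_k(\alpha)-\gamma(\alpha))x_2})$, integrates by parts once in $\alpha$ and splits into $H_{1,1}$ and $H_{1,2}$ according to whether $\partial_\alpha$ hits this factor, performs a \emph{second} $\alpha$-integration by parts in the $H_{1,2}$ case (possible because differentiating that factor produces only $\gamma'-\gamma_k'$, no second derivatives), uses a third $s$-integration by parts, and then controls the result by decomposing $I_j^{\ast}$ into the fine intervals $J_m$ of $\mathfrak{A}(2^{-k})$, invoking (\ref{p6}), the Cauchy--Schwarz counting bound (\ref{csineq}), and the $\Delta_{J_m}(\gamma^{\prime})/\Delta_{I_j}(\gamma^{\prime})$ comparisons. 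In short, your $J_m$-subdivision machinery is aimed at the wrong place: the technical heart of the lemma is the phase term (A), and your proposal has no valid mechanism for extracting the required gain there.
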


\begin{remark}\label{conseq}
We now state a consequence of Lemma (\ref{errorlemma}). By (\ref{newcard}), there are $\lesssim 2^{l\kappa_{\Omega}}$ intervals $I_j$. Moreover, the presence of $\phi_0(2^{-6M}|x|)$ implies that all terms with $2^{n-k}|I_j|^{-1}\gg 1$ are identically $0$, so (\ref{diffest}) implies that
\begin{align}\label{diffest2}
\sum_{\substack{k: k\ge l,\\ j,\\ n: n>k-l}}\Norm{L_{l, k, j, n}-B_{l, k, j, n}}_{L^1(\mathbb{R}^2)}\lesssim 1.
\end{align}
Then (\ref{diffest2}) implies that it suffices to prove (\ref{notildaest}) with $S_{l, k, j, n}$ replaced by the operator with kernel $B_{l, k, j, n}.$
\end{remark}

\begin{proof}[Proof of Lemma \ref{errorlemma}]
The first step is to write
\begin{align*}
L_{l, k, j, n}(x)-B_{l, k, j, n}(x)=H_1(x)+H_2(x),
\end{align*}
where
\begin{multline*}
H_1(x)=\phi_0(2^{-6M}|x|)\int_0^{\infty}\int_{I_j^{\ast}}e^{is(\alpha x_1+\gamma(\alpha)x_2+1)}(1-e^{is(\gamma_k(\alpha)x_2-\gamma(\alpha)x_2)})\\
\beta_{I_j}(\alpha)\Phi_{k, j, n}(x, \alpha)\theta_k(s)a(s)s(\alpha\gamma^{\prime}(\alpha)-\gamma(\alpha))\,d\alpha\,ds
\end{multline*}
and
\begin{multline*}
H_2(x)=\phi_0(2^{-6M}|x|)\int_0^{\infty}\int_{I_j^{\ast}}e^{is(\alpha x_1+\gamma_k(\alpha)x_2+1)}\beta_{I_j}(\alpha)\Phi_{k, j, n}(x, \alpha)
\\
\theta_k(s)a(s)s(\alpha(\gamma^{\prime}(\alpha)-\gamma_k^{\prime}(\alpha))-(\gamma(\alpha)-\gamma_k(\alpha)))\,d\alpha\,ds.
\end{multline*}
Note that the only places where the kernels $B_{l, k, j, n}$ and $L_{l, k, j, n}$ differ are in the complex exponential factor and the Jacobian factor in their integral representations. Here the term $H_1$ represents the difference in the complex exponential factor and the term $H_2$ represents the difference in the Jacobian factor. The estimation of $\int|H_1(x)|\,dx$ and $\int|H_2(x)|\,dx$ will share some similarities with the estimation of $\int |K_{k, j, n}(x)|\,dx$ from Section $3$. 

\subsubsection*{Estimation of $\int|H_1(x)|\,dx$}
We observe that (\ref{p5}) implies that for $s, x, \alpha$ in the support of $\phi_0(2^{-6M}|x|)\theta_k(s)\beta_{I_j}(\alpha)$ and for every integer $N\ge 0$,
\begin{align}\label{phase1}
|\partial_s^N\partial_{\alpha}(1-e^{is(\gamma_k(\alpha)x_2-\gamma(\alpha)x_2)})|\lesssim_N 2^{-kN}2^{k}|\gamma_k^{\prime}(\alpha)-\gamma^{\prime}(\alpha)||x|,
\end{align}
\begin{multline}\label{phase11}
|\partial_s^N\partial_{\alpha}^2(1-e^{is(\gamma_k(\alpha)x_2-\gamma(\alpha)x_2)})|\lesssim_N 
\\
2^{-kN}|x|\bigg(2^{2k}|\gamma_k^{\prime}(\alpha)-\gamma^{\prime}(\alpha)|^2+2^k(|\gamma_k^{\prime\prime}(\alpha)|+|\gamma^{\prime\prime}(\alpha)|)\bigg)
\end{multline}
and
\begin{align}\label{phase2}
|\partial_s^N(1-e^{is(\gamma_k(\alpha)x_2-\gamma(\alpha)x_2)})|\lesssim_N 2^{-kN}|x|.
\end{align}
Integrating by parts $H_1$ once in $\alpha$ yields
\begin{multline*}
H_1(x)=\phi_0(2^{-6M}|x|)\int_0^{\infty}\int_{I_j^{\ast}}e^{is(\alpha x_1+\gamma(\alpha)x_2+1)}
\\
\partial_{\alpha}g_{l, k, j, n}(x, s, \alpha)\theta_k(s)a(s)\,ds
\end{multline*}
where
\begin{align*}
g_{l, k, j, n}(x, s, \alpha)=\frac{(1-e^{is(\gamma_k(\alpha)x_2-\gamma(\alpha)x_2)})\beta_{I_j}(\alpha)\Phi_{k, j, n}(x, \alpha)(\alpha\gamma^{\prime}(\alpha)-\gamma(\alpha))}{x_1+x_2\gamma^{\prime}(\alpha)}.
\end{align*}
Now if $\partial_{\alpha}$ hits the term $(1-e^{is(\gamma_k(\alpha)x_2-\gamma(\alpha)x_2)})$, then we may integrate by parts again in $\alpha$, since no higher derivatives of $\gamma$ or $\gamma_k$ will appear. Thus we will further decompose
\begin{align*}
H_1(x)=H_{1, 1}(x)+H_{1, 2}(x),
\end{align*}
where
\begin{align*}
H_{1, 1}(x)=\phi_0(2^{-6M}|x|)\int_0^{\infty}\int_{I_j^{\ast}}e^{is(\alpha x_1+\gamma(\alpha)x_2+1)}
h_{l, k, j, n, 1}(x, s, \alpha)\theta_k(s)a(s)\,ds
\end{align*}
\begin{align*}
H_{1, 2}(x)=\phi_0(2^{-6M}|x|)\int_0^{\infty}\int_{I_j^{\ast}}e^{is(\alpha x_1+\gamma(\alpha)x_2+1)}
h_{l, k, j, n, 2}(x, s, \alpha)\theta_k(s)a(s)\,ds,
\end{align*}
and 
\begin{multline*}
h_{l, k, j, n, 1}(x, s, \alpha)=
\\(1-e^{is(\gamma_k(\alpha)x_2-\gamma(\alpha)x_2)})\partial_{\alpha}[\frac{\beta_{I_j}(\alpha)\Phi_{k, j, n}(x, \alpha)(\alpha\gamma^{\prime}(\alpha)-\gamma(\alpha))}{x_1+x_2\gamma^{\prime}(\alpha)}],
\end{multline*}
\begin{multline*}
h_{l, k, j, n, 2}(x, s, \alpha)=
\\\partial_{\alpha}\bigg[\frac{\partial_{\alpha}[1-e^{is(\gamma_k(\alpha)x_2-\gamma(\alpha)x_2)}]\beta_{I_j}(\alpha)\Phi_{k, j, n}(x, \alpha)(\alpha\gamma^{\prime}(\alpha)-\gamma(\alpha))}{s(x_1+x_2\gamma^{\prime}(\alpha))^2}\bigg].
\end{multline*}
Here we may think of $H_{1, 1}$ as representing the case when $\partial_{\alpha}$ does not hit the term $(1-e^{is(\gamma_k(\alpha)x_2-\gamma(\alpha)x_2)})$ when we integrate $H_1$ by parts with respect to $\alpha$, and $H_{1, 2}$ may be thought of as representing the case when $\partial_{\alpha}$ does hit $(1-e^{is(\gamma_k(\alpha)x_2-\gamma(\alpha)x_2)})$.
\subsubsection*{Estimation of $\int|H_{1, 1}(x)|\,dx$}
Observe that (\ref{phase2}) with $N=0$ implies that
\begin{align*}
|h_{l, k, j, n, 1}(x, s, \alpha)|\lesssim\frac{|\gamma^{\prime\prime}(\alpha)|(|I_j|2^{k-n}|x|+1)+|I_j|^{-1}}{|x_1+x_2\gamma^{\prime}(\alpha)|}|x|.
\end{align*}
Thus integrating by parts in $s$ three times and using (\ref{phase2}) and the change of coordinates (\ref{invcoord}) yields
\begin{multline*}
\int|H_{1, 1}(x)|\,dx\lesssim 2^{-k\kappa_{\Omega}}\int_{I_j^{\ast}}(|\gamma^{\prime\prime}(\alpha)|(|I_j|2^{k-n}+1)+|I_j|^{-1})
\\
\times\int_{|u_1|\approx 2^{n-k}|I_j|^{-1}}\frac{1}{|u_1|}\frac{2^k}{(1+2^k|u_2|)^3}|u|\,du\,d\alpha
\\
\lesssim 2^{-k\kappa_{\Omega}}2^{n-k}|I_j|^{-1}\int_{I_j^{\ast}}(|\gamma^{\prime\prime}(\alpha)|(|I_j|2^{k-n}+1)+|I_j|^{-1})\,d\alpha.
\end{multline*}
By (\ref{slope}) and (\ref{coomp}), we have
\begin{align*}
\int_{I_j^{\ast}}|\gamma^{\prime\prime}(\alpha)||I_j|\,d\alpha\lesssim 2^{-l},
\end{align*}
and so when $n>k-l$,
\begin{align}\label{Ay1est}
\int|H_{1, 1}(x)|\,dx\lesssim 2^{-k\kappa_{\Omega}}2^{n-k}|I_j|^{-1}(2^{k-l-n}+1)\lesssim 2^{-k\kappa_{\Omega}}2^{n-k}|I_j|^{-1}.
\end{align}
\subsubsection*{Estimation of $\int|H_{1, 2}(x)|\,dx$} 
Note that (\ref{phase1}) and (\ref{phase11}) with $N=0$ implies that
\begin{multline}\label{littlehest}
|h_{l, k, j, n, 2}(x, s, \alpha)|\lesssim
\\
\frac{|\gamma^{\prime\prime}(\alpha)|(|I_j|2^{k-n}|x|+1)+|I_j|^{-1}}{|x_1+x_2\gamma^{\prime}(\alpha)|}|x|\bigg(2^{k-n}|I_j||\gamma_k^{\prime}(\alpha)-\gamma^{\prime}(\alpha)|\bigg)
\\
+\frac{|x|}{|x_1+x_2\gamma^{\prime}(\alpha)|}\bigg(2^{2k-n}|I_j||\gamma_k^{\prime}(\alpha)-\gamma^{\prime}(\alpha)|^2\bigg)
\\
+\frac{|x|}{|x_1+x_2\gamma^{\prime}(\alpha)|}2^{k-n}|I_j|(|\gamma_k^{\prime\prime}(\alpha)|+|\gamma^{\prime\prime}(\alpha)|).
\end{multline}
Using (\ref{phase1}), (\ref{phase2}), (\ref{littlehest}) and the change of coordinates (\ref{invcoord}), we have
\begin{multline*}
\int|H_{1, 2}(x)|\,dx\lesssim 
\\
\bigg(2^{-k\kappa_{\Omega}}\int_{I_j^{\ast}}2^{k-n}|I_j||\gamma_k^{\prime}(\alpha)-\gamma^{\prime}(\alpha)|\big(|\gamma^{\prime\prime}(\alpha)|(|I_j|2^{k-n}+1)+|I_j|^{-1}\big)
\\
\times\int_{|u_1|\approx 2^{n-k}|I_j|^{-1}}\frac{1}{|u_1|}\frac{2^k}{(1+2^k|u_2|)^3}|u|\,du\,d\alpha\bigg)
\\
+\bigg(2^{-k\kappa_{\Omega}}\int_{I_j^{\ast}}2^{2k-n}|I_j||\gamma_k^{\prime}(\alpha)-\gamma^{\prime}(\alpha)|^2
\\
\times\int_{|u_1|\approx 2^{n-k}|I_j|^{-1}}\frac{1}{|u_1|}\frac{2^k}{(1+2^k|u_2|)^3}|u|\,du\,d\alpha\bigg)
\\
+\bigg(2^{-k\kappa_{\Omega}}\int_{I_j^{\ast}}2^{k-n}|I_j|(|\gamma_k^{\prime\prime}(\alpha)|+|\gamma^{\prime\prime}(\alpha)|)
\\
\times\int_{|u_1|\approx 2^{n-k}|I_j|^{-1}}\frac{1}{|u_1|}\frac{2^k}{(1+2^k|u_2|)^3}|u|\,du\,d\alpha\bigg),
\end{multline*}
and hence proceeding as in the estimation of $\int|H_{1, 1}(x)|\,dx$ we have
\begin{multline*}
\int|H_{1, 2}(x)|\,dx\lesssim
\\
\bigg(2^{-k\kappa_{\Omega}}2^{n-k}|I_j|^{-1}\int_{I_j^{\ast}}2^{k-n}|I_j||\gamma_k^{\prime}(\alpha)-\gamma^{\prime}(\alpha)||\gamma^{\prime\prime}(\alpha)|\big((|I_j|2^{k-n}+1)+|I_j|^{-1}\big)\,d\alpha\bigg)
\\
+\bigg(2^{-k\kappa_{\Omega}}2^{n-k}|I_j|^{-1}\int_{I_j^{\ast}}2^{2k-n}|I_j||\gamma_k^{\prime}(\alpha)-\gamma^{\prime}(\alpha)|^2\,d\alpha\bigg)
\\
+\bigg(2^{-k\kappa_{\Omega}}2^{n-k}|I_j|^{-1}\int_{I_j^{\ast}}2^{k-n}|I_j|(|\gamma_k^{\prime\prime}(\alpha)|+|\gamma^{\prime\prime}(\alpha)|)\,d\alpha\bigg).
\end{multline*}
Note that since $\{I_j\}$ satisfies (\ref{slope}) and (\ref{coomp}), we have
\begin{align*}
\int_{I_j^{\ast}}|I_j||\gamma^{\prime\prime}(\alpha)|\,d\alpha\lesssim 2^{-l}.
\end{align*}
As stated in (\ref{p7}), we also have
\begin{align*}
\int_{I_j^{\ast}}|I_j||\gamma_{k}^{\prime\prime}(\alpha)|\,d\alpha\lesssim 2^{-l}.
\end{align*}
Thus we have
\begin{multline*}
\int|H_{1, 2}(x)|\,dx\lesssim
\\
\bigg(2^{-k\kappa_{\Omega}}2^{n-k}|I_j|^{-1}\int_{I_j^{\ast}}2^{k-n}|I_j||\gamma_k^{\prime}(\alpha)-\gamma^{\prime}(\alpha)||\big(|\gamma^{\prime\prime}(\alpha)|(|I_j|2^{k-n}+1)+|I_j|^{-1}\big)\,d\alpha\bigg)
\\
+\bigg(2^{-k\kappa_{\Omega}}2^{n-k}|I_j|^{-1}\int_{I_j^{\ast}}2^{2k-n}|I_j||\gamma_k^{\prime}(\alpha)-\gamma^{\prime}(\alpha)|^2\,d\alpha\bigg)
\\
+2^{-k\kappa_{\Omega}}2^{n-k}|I_j|^{-1}2^{-n+(k-l)}.
\end{multline*}
Now we bound the integrals over $I_j^{\ast}$ by a sum of integrals over all the $J_m$ such that $J_m\cap I_j^{\ast}\ne\emptyset$ and use (\ref{p6}). We have
\begin{multline*}
\int|H_{1, 2}(x)|\,dx\lesssim
\\
2^{-k\kappa_{\Omega}}2^{n-k}|I_j|^{-1}\sum_{m:\,J_m\cap I_j^{\ast}\ne\emptyset}\bigg(\int_{J_m}(2^{-n}\frac{|I_j|}{|J_m|}(|\gamma^{\prime\prime}(\alpha)|(|I_j|2^{k-n}+1)+|I_j|^{-1})\,d\alpha
\\
+\int_{J_m}2^{-n}\frac{|I_j|}{|J_m|^2}\,d\alpha\bigg)+2^{-k\kappa_{\Omega}}2^{n-k}|I_j|^{-1}2^{-n+(k-l)}.
\end{multline*}
Using (\ref{slope}) gives
\begin{align*}
\int_{J_m}\bigg(|\gamma^{\prime\prime}(\alpha)|(|I_j|2^{k-n}+1)+|I_j|^{-1}\bigg)\,d\alpha\lesssim 2^{-n}\frac{|I_j|}{|J_m|}+\frac{|J_m|}{|I_j|}.
\end{align*}
Therefore
\begin{multline}\label{H121}
\int|H_{1, 2}(x)|\,dx\lesssim 
\\
2^{-k\kappa_{\Omega}}2^{n-k}|I_j|^{-1}\sum_{m:\,J_m\cap I_j^{\ast}\ne\emptyset}\bigg(2^{-2n}\frac{|{I}_j|^2}{|J_m|^2}+2^{-n}+2^{-n}\frac{|I_j|}{|J_m|}\bigg)
\\
+2^{-k\kappa_{\Omega}}2^{n-k}|I_j|^{-1}2^{-n+(k-l)}.
\end{multline}
\indent
We now proceed to bound (\ref{H121}). We will first show that for any $j$,
\begin{align}\label{csineq}
\text{card}(\{m:\,J_m\cap I_j^{\ast}\ne\emptyset\})\lesssim 1+ \text{card}(\{m:\,J_m\subset I_j^{\ast}\})\lesssim 2^{(k-l)/2}.
\end{align}
By Cauchy-Schwarz, (\ref{right}) and (\ref{left}),
\begin
{multline*}
\text{card}(\{m:\,J_m\subset I_j^{\ast}\})\le\sum_{\{m:\,J_m\subset I_j^{\ast}\}}2^{k/2}(c_{m+1}-c_m)^{1/2}(\gamma^{\prime}(c_{m+1})-\gamma^{\prime}(c_{m}))^{1/2}
\\
\le 2^{k/2}\bigg(\sum_{\{m:\,J_m\subset I_j^{\ast}\}}c_{m+1}-c_m\bigg)^{1/2}\bigg(\sum_{\{m:\,J_m\subset I_j^{\ast}\}}\gamma^{\prime}(c_{m+1})-\gamma^{\prime}(c_m)\bigg)^{1/2}
\\
\le 2^{k/2}(b_{j+1}-b_j)^{1/2}(\gamma^{\prime}(b_{j+1})-\gamma^{\prime}(b_j))^{1/2}\le 2^{(k-l)/2},
\end{multline*}
which proves (\ref{csineq}). Using (\ref{csineq}), we have 
\begin{align}\label{c1}
\sum_{m:\,J_m\cap I_j^{\ast}\ne\emptyset, |J_m|\ge \frac{|I_j|}{100}}\bigg(2^{-2n}\frac{|{I}_j|^2}{|J_m|^2}+2^{-n}+2^{-n}\frac{|I_j|}{|J_m|}\bigg)\lesssim 1
\end{align}
and
\begin{align}\label{c2}
\sum_{m:\,J_m\cap I_j^{\ast}\ne\emptyset}2^{-n}\lesssim 1.
\end{align}
If $J_m\cap I_j^{\ast}\ne\emptyset$ and $|J_m|<\frac{|I_j|}{100}$, then $J_m\subset I_{j-1}\cup I_j\cup I_{j+1}$. We will write $\Delta_{I_j}(\gamma^{\prime})$ in place of $\gamma^{\prime}(b_{j+2})-\gamma^{\prime}(b_{j-1})$. Similarly define $\Delta_{J_m}(\gamma^{\prime})=\gamma^{\prime}(c_{m+1})-\gamma^{\prime}(c_m)$. By (\ref{slope}), we have
\begin{align*}
|I_j|\lesssim 2^{-l}(\Delta_{I_j}(\gamma^{\prime}))^{-1}.
\end{align*}
By (\ref{left}) and (\ref{right}), we also have
\begin{align*}
|J_m|\approx 2^{-k}(\Delta_{J_m}(\gamma^{\prime}))^{-1}.
\end{align*}
We thus have
\begin{multline}\label{c3}
\sum_{m:\,J_m\cap I_j^{\ast}\ne\emptyset,\,|J_m|<|I_j|/100}\bigg(2^{-2n}\frac{|I_j|^2}{|J_m|^2}+2^{-n}\frac{|I_j|}{|J_m|}\bigg)
\\
\lesssim
\sum_{m:\,J_m\cap I_j^{\ast}\ne\emptyset,\,|J_m|<|I_j|/100}\bigg(2^{-2n}2^{2(k-l)}\bigg(\frac{\Delta_{J_m}(\gamma^{\prime})}{\Delta_{I_j}(\gamma^{\prime})}\bigg)^2+2^{-n}\bigg(\frac{\Delta_{J_m}(\gamma^{\prime}}{\Delta_{I_j}(\gamma^{\prime})}\bigg)\bigg)
\\
\lesssim 2^{-n+k-l}\lesssim 1.
\end{multline}
Together, (\ref{H121}), (\ref{c1}), (\ref{c2}) and (\ref{c3}) imply that when $n>k-l$ we have
\begin{align}\label{h12est}
\int |H_{1, 2}(x)|\,dx\lesssim 2^{-k\kappa_{\Omega}}2^{n-k}|I_j|^{-1}.
\end{align}
Together (\ref{Ay1est}) and (\ref{h12est}) imply that
\begin{align}\label{finalh1est}
\int |H_{1}(x)|\,dx\lesssim 2^{-k\kappa_{\Omega}}2^{n-k}|I_j|^{-1},
\end{align}
completing the estimation of $\int|H_{1}(x)|\,dx$.

\subsubsection*{Estimation of $\int|H_2(x)|\,dx$}
Integrating by parts $H_2$ once in $\alpha$ and twice in $s$ yields
\begin{align*}
\int|H_2(x)|\,dx\lesssim2^{-k\kappa_{\Omega}}\int\phi_0(2^{-6M}|x|)\int_{I_j^{\ast}}|\partial_{\alpha}g_{l, k, j, n}(x, \alpha)|
\\
\times\frac{2^k}{(1+2^k|\alpha x_1+\gamma(\alpha)x_2+1)|)^2}\,d\alpha\,dx,
\end{align*}
where
\begin{align*}
g_{l, k, j, n}(x, \alpha)=\frac{\Phi_{k, j, n}(x, \alpha)\beta_{I_j}(\alpha)[\alpha(\gamma^{\prime}(\alpha)-\gamma_k^{\prime}(\alpha))-(\gamma(\alpha)-\gamma_k(\alpha))]}{x_1+x_2\gamma^{\prime}(\alpha)}.
\end{align*}
By (\ref{p8}) and (\ref{p5}), for $\alpha$ in the support of $\beta_{I_j}(\alpha)$ we have
\begin{align}\label{jacob1}
|\alpha(\gamma^{\prime}(\alpha)-\gamma_k^{\prime}(\alpha))-(\gamma(\alpha)-\gamma_k(\alpha))|\lesssim 2^{-l}|I_j|^{-1}.
\end{align}
It is easy to see that (\ref{jacob1}) implies
\begin{align}\label{jacob2}
\bigg|\partial_{\alpha}\bigg[\alpha(\gamma^{\prime}(\alpha)-\gamma_k^{\prime}(\alpha))-(\gamma(\alpha)-\gamma_k(\alpha))\bigg]\bigg|
\lesssim 2^{-l}|I_j|^{-1}+|\gamma^{\prime\prime}(\alpha)|+|\gamma_k^{\prime\prime}(\alpha)|.
\end{align}
By (\ref{jacob1}) and (\ref{jacob2}), for $x$ in the support of $H_2$ we have
\begin{align*}
|\partial_{\alpha}g_{l, k, j, n}(x, \alpha)|\lesssim 2^{-l}|I_j|^{-1}\frac{(|\gamma^{\prime\prime}(\alpha)|+|\gamma_k^{\prime\prime}(\alpha)|)(|I_j|2^{k}+1)+|I_j|^{-1}}{|x_1+x_2\gamma^{\prime}(\alpha)|},
\end{align*}
and so applying the change of coordinates (\ref{invcoord}) and estimating the integral using (\ref{slope}) and (\ref{coomp}) as we did above in the estimation of $\int|H_1(x)|\,dx$, we obtain for $k\ge l$ and $n>k-l$,
\begin{align}\label{Ay2est}
\int|H_2(x)|\,dx\lesssim 2^{-k\kappa_{\Omega}}2^{-l}|I_j|^{-1}\lesssim 2^{-k\kappa_{\Omega}}2^{n-k}|I_j|^{-1}.
\end{align}
Together (\ref{finalh1est}) and (\ref{Ay2est}) imply that (\ref{diffest}) holds whenever $n>k-l$, completing the proof of the lemma.
\end{proof}

\subsection*{Estimation of the main term}
We have thus shown that to prove Lemma (\ref{longlemma}), it suffices to prove 
\begin{lemma}\label{maintermlemma}
Let $B_{l, k, j, n}$ be as defined previously. Then
\begin{align}\label{best}
\Norm{\big(\sum_{k: k\ge l}\sum_j\sum_{n: n>{k-l}}B_{l, k, j, n}(a_Q)}_{L^1(\mathbb{R}^2)}\lesssim 1.
\end{align}
\end{lemma}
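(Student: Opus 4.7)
The plan is to perform two additional integrations by parts in the $\alpha$ variable (made possible by the $C^\infty$ smoothness of $\gamma_k$ guaranteed by Lemma \ref{approxlemma2}), beyond the $s$-IBPs and single $\alpha$-IBP used in Section \ref{l1kerest}. This will produce pointwise kernel bounds for $B_{l,k,j,n}$ with enough decay in $(k,j,n)$ so that, after convolution with $a_Q$ (which does not change the $L^1$ norm by more than a constant since $\|a_Q\|_{L^1}\lesssim 1$), the sum over $k\ge l$, $n>k-l$, and $j$ is bounded by a universal constant.

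The execution proceeds as follows. On the support of $\Phi_{k,j,n}$ the $\alpha$-derivative of the phase, $s(x_1+\gamma_k'(\alpha)x_2)$, has magnitude $\approx 2^{n}/|I_j|$: by (\ref{p8}) we have $|\gamma_k'(\alpha)-\gamma'(\alpha)|\lesssim 2^{-l}|I_j|^{-1}\le 2^{n-k}|I_j|^{-1}$, which is negligible on the support of $\Phi_{k,j,n}$, so $|x_1+\gamma_k'(\alpha)x_2|\approx|x_1+\gamma'(\alpha)x_2|\approx 2^{n-k}|I_j|^{-1}$. Each $\alpha$-IBP therefore gains a factor of $|I_j|/2^{n}$, and combined with three $s$-IBPs (contributing $(1+2^k|u_2|)^{-3}$ and the $2^{-k/2}$ factor from the symbol order of $a$) yields improved pointwise bounds on $B_{l,k,j,n}$. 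The cost of $\alpha$-derivatives hitting $\beta_{I_j}$, $\Phi_{k,j,n}$, the Jacobian $\alpha\gamma_k'-\gamma_k$, and the denominators $1/(x_1+\gamma_k'x_2)$ introduced by the previous IBPs produces terms involving $|I_j|^{-1}$, $|I_j|2^{k-n}\cdot|\gamma_k''|$, and $|\gamma_k''|$, $|\gamma_k'''|$; these are absorbed using the integrated derivative bounds (\ref{p3}), (\ref{p4}) and (\ref{p7}) of Lemma \ref{approxlemma2}. The argument parallels closely the estimation of $H_{1,2}$ in the proof of Lemma \ref{errorlemma}, except that now we do \emph{two} IBPs in $\alpha$ instead of one, since $\gamma_k$ (unlike $\gamma$) has a controlled third derivative.

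The summation over $(k,j,n)$ then proceeds in the reverse order from the IBP execution. The $n$-sum, for fixed $(k,j)$, is geometric in $2^{-n}$. The $j$-sum uses the geometric constraint $|I_j|\gtrsim 2^{n-k}$ (enforced by $\Phi_{k,j,n}$ being supported in the ball $|x|\lesssim 1$), together with the cardinality bound (\ref{newcard}) and the sum bound (\ref{sumconstraint}). The outer $k$-sum over $k\ge l$ converges thanks to the $2^{-k\kappa_\Omega}=2^{-k/2}$ factor from the symbol order combined with the power of $2^{-n}\le 2^{-(k-l)}$ gained from the $\alpha$-IBPs. The hard part will be the careful bookkeeping for the cross terms produced by two successive $\alpha$-IBPs, where derivatives interact with the two different scales of partitions $\{I_j\}$ (at scale $2^{-l}$) and $\{J_m\}$ (at scale $2^{-k}$); the integrated $L^1$-control (\ref{p7}) of $|I_j|\gamma_k''$ by $2^{-l}$ is the critical bound tying together these two scales, and a Cauchy-Schwarz estimate over $\{m:J_m\cap I_j^\ast\neq\emptyset\}$ analogous to the one leading to (\ref{csineq}) in the proof of Lemma \ref{errorlemma} will be needed to convert the pointwise estimates into summable $L^1$ bounds.
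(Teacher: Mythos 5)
Your proposal is correct and follows essentially the same route as the paper: after replacing $\gamma$ by the smoothed $\gamma_k$, the paper integrates by parts twice in $\alpha$ (and twice in $s$), uses that $|x_1+x_2\gamma_k'(\alpha)|\approx 2^{n-k}|I_j|^{-1}$ on the support of $\Phi_{k,j,n}$, and then performs exactly the two-scale $\{I_j\}$--$\{J_m\}$ bookkeeping you defer to the end, using (\ref{newcard}) together with slope-ratio counting. One small correction of emphasis: in that bookkeeping the decisive inputs are the $J_m$-scale bounds (\ref{p3}) and (\ref{p4}) (needed because the cross terms involve $|\gamma_k''|^2$ and $|\gamma_k'''|$, which the $L^1$ bound (\ref{p7}) cannot control), and the paper counts the intervals $J_m$ via a choice of $j(m)$ and the ratios $\Delta_{J_m}(\gamma_k')/\Delta_{I_{j(m)}}(\gamma_k')$ rather than literally the Cauchy--Schwarz step (\ref{csineq}).
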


\begin{proof}[Proof of Lemma \ref{maintermlemma}]

We have
\begin{multline*}
B_{l, k, j, n}(x)=\phi_0(2^{-6M}|x|)\int_0^{\infty}\int_{I_j^{\ast}}e^{is(\alpha x_1+\gamma_k(\alpha)x_2+1)}
\\
\times\beta_{I_j}(\alpha)\Phi_{k, j, n}(x, \alpha)\theta_k(s)a(s)s(\alpha\gamma_k^{\prime}(\alpha)-\gamma_k(\alpha))\,d\alpha\,ds.
\end{multline*}
We integrate by parts twice in $\alpha$ to obtain
\begin{multline*}
B_{l, k, j, n}(x)=\phi_0(2^{-6M}|x|)\int_0^{\infty}\int_{I_j^{\ast}}e^{is(\alpha x_1+\gamma_k(\alpha)x_2+1)}g_{l, k, j, n}(x, \alpha)
\\
\times s^{-1}\theta_k(s)a(s)\,d\alpha\,ds.
\end{multline*}
where
\begin{align*}
g_{l, k, j, n}(x, \alpha)=\partial_{\alpha}[\frac{1}{x_1+x_2\gamma_k^{\prime}(\alpha)}\partial_{\alpha}[\frac{\beta_{I_j}(\alpha)\Phi_{k, j, n}(x, \alpha)(\alpha\gamma_k^{\prime}(\alpha)-\gamma_k(\alpha))}{x_1+x_2\gamma_k^{\prime}(\alpha)}]].
\end{align*}
Integrating by parts twice in $s$ yields
\begin{multline*}
\int|B_{l, k, j, n}(x)|\,dx\lesssim 2^{-k(\kappa_{\Omega}+1)}\int\phi_0(2^{-6M}|x|)\int_{I_j^{\ast}}|g_{l, k, j, n}(x, \alpha)|
\\
\times\frac{2^k}{(1+2^k|\alpha x_1+\gamma_k(\alpha)x_2+1|)^{2}}\,d\alpha\,dx.
\end{multline*}
Observe that for $x$ in the support of $\phi_0(2^{-6M}|x|)$,
\begin{align*}
|g_{l, k, j, n}(x, \alpha)|\lesssim\frac{2^{k-n}|I_j||\gamma_k^{\prime\prime\prime}(\alpha)|+2^{2(k-n)}|I_j|^2|\gamma_k^{\prime\prime}(\alpha)|^2+|I_j|^{-2}}{|x_1+x_2\gamma_k^{\prime}(\alpha)|^2}.
\end{align*}
Thus using the change of coordinates 
\begin{align*}
(x_1, x_2)\mapsto(u_1, u_2):=(x_1+x_2\gamma_k^{\prime}(\alpha), 1+\alpha x_1+\gamma_k(\alpha)x_2),
\end{align*} 
we have
\begin{multline*}
\int|B_{l, k, j, n}(x)|\,dx\lesssim 2^{-k(\kappa_{\Omega}+1)}\int_{I_j^{\ast}}(2^{k-n}|I_j||\gamma_k^{\prime\prime\prime}(\alpha)|
\\
+2^{2(k-n)}|I_j|^2|\gamma_k^{\prime\prime}(\alpha)|^2+|I_j|^{-2})\int_{|u_1|\approx 2^{n-k}|I_j|^{-1}}\frac{1}{|u_1|^2}\frac{2^k}{(1+2^k|u_2|)^2}\,du\,d\alpha
\\
\lesssim 2^{-k(\kappa_{\Omega}+1)}2^{-n+k}|I_j|\int_{I_j^{\ast}}(2^{k-n}|I_j||\gamma_k^{\prime\prime\prime}(\alpha)|+2^{2(k-n)}|I_j|^2|\gamma_k^{\prime\prime}(\alpha)|^2+|I_j|^{-2})\,d\alpha.
\end{multline*}
Since 
\begin{align*}
2^{-k(\kappa_{\Omega}+1)}2^{-n+k}|I_j|\int_{|I_j|^{\ast}}|I_j|^{-2}\,d\alpha\lesssim 2^{-k\kappa_{\Omega}}2^{-n},
\end{align*}
we have
\begin{multline*}
\int|B_{l, k, j, n}(x)|\,dx\lesssim \bigg(2^{-k(\kappa_{\Omega}+1)}2^{-n+k}|I_j|\int_{I_j^{\ast}}(2^{k-n}|I_j||\gamma_k^{\prime\prime\prime}(\alpha)|
\\
+2^{2(k-n)}|I_j|^2|\gamma_k^{\prime\prime}(\alpha)|^2\,)d\alpha\bigg)+2^{-k\kappa_{\Omega}}2^{-n}.
\end{multline*}
Now for each $m$, choose $j(m)$ so that $I_{j(m)}^{\ast}\cap J_m\ne\emptyset$ and $I_{j(m)}$ has maximal length. Then using (\ref{newcard}), we have
\begin{multline*}
\sum_j\int|B_{l, k, j, n}(x)|\,dx\lesssim2^{-n}+2^{-n}2^{-k\kappa_{\Omega}}\sum_j\int_{I_j^{\ast}}2^{k-n}|I_j|^2|\gamma_k^{\prime\prime\prime}(\alpha)|\,d\alpha
\\
+2^{-n}2^{-k\kappa_{\Omega}}\sum_j\int_{I_j^{\ast}}2^{2(k-n)}|I_j|^3|\gamma_k^{\prime\prime}(\alpha)|^2\,d\alpha
\\
\lesssim 2^{-n}+2^{-n}2^{-k\kappa_{\Omega}}\sum_m2^{-n}\frac{|{I}_{j(m)}|^2}{|J_m|^2}\int_{{J_m}}2^{k}|J_m|^2|\gamma_k^{\prime\prime\prime}(\alpha)|\,d\alpha
\\
+2^{-n}2^{-k\kappa_{\Omega}}\sum_m2^{-2n}\frac{|{I}_{j(m)}|^3}{|J_m|^3}\int_{J_m}2^{2k}|J_m|^{3}|\gamma_k^{\prime\prime}(\alpha)|^2\,d\alpha.
\end{multline*}
Using (\ref{p3}) and (\ref{p4}), we have
\begin{align*}
\sum_j\int|B_{l, k, j, n}(x)|\,dx\lesssim 2^{-n}+2^{-n}2^{-k\kappa_{\Omega}}\sum_m(2^{-n}\frac{|{I}_{j(m)}|^2}{|J_m|^2}+2^{-2n}\frac{|{I}_{j(m)}|^3}{|J_m|^3}),
\end{align*}
and hence using that $n>k-l$,
\begin{multline*}
\sum_j\int|B_{l, k, j, n}(x)|\,dx\lesssim 
\\
2^{-n}+2^{2(k-l-n)}2^{-k\kappa_{\Omega}}\sum_m(2^{-2(k-l)}\frac{|{I}_{j(m)}|^2}{|J_m|^2}+2^{-3(k-l)}\frac{|{I}_{j(m)}|^3}{|J_m|^3}).
\end{multline*}
Since there are at most $\lesssim 2^{l\kappa_{\Omega}}$ intervals $J_m$ such that for some $j$, $J_m\cap I_j^{\ast}\ne\emptyset$ and $|J_m|\ge |I_j|/100$, we have
\begin{multline}\label{f1}
2^{2(k-l-n)}2^{-k\kappa_{\Omega}}\sum_{m:\,|J_m|\ge |I_{j(m)}|/100}(2^{-2(k-l)}\frac{|{I}_{j(m)}|^2}{|J_m|^2}+2^{-3(k-l)}\frac{|{I}_{j(m)}|^3}{|J_m|^3})
\\
\lesssim 2^{(l-k)\kappa_{\Omega}}2^{-n}.
\end{multline}
Note that if $|J_m|<|I_j|/100$, then $J_m\subset I_{j(m)-1}\cup I_{j(m)}\cup I_{j(m)+1}$. We will write $\Delta_{{I}_j}(\gamma_k^{\prime})$ in place of $|\gamma_k^{\prime}(b_{j+2})-\gamma_k^{\prime}(b_{j-1})|$. Similarly define $\Delta_{J_m}(\gamma_k^{\prime})=|\gamma_k^{\prime}(c_{m+1})-\gamma_k^{\prime}(c_m)|$. By (\ref{slope}), (\ref{coomp}) and (\ref{p8}), for every $j$ we have
\begin{align*}
|I_j|\lesssim 2^{-l}(\Delta_{{I}_j}(\gamma_k^{\prime}))^{-1}.
\end{align*}
Moreover, (\ref{left}) and (\ref{right}) also imply that for every $m$
\begin{align*}
|J_m|\approx 2^{-k}(\Delta_{J_m}(\gamma_k^{\prime}))^{-1}.
\end{align*}
It follows that 
\begin{align*}
\frac{|{I}_{j(m)}|}{|J_m|}\lesssim 2^{k-l}\frac{\Delta_{J_m}(\gamma_k^{\prime})}{\Delta_{{I}_{j(m)}}(\gamma_k^{\prime})},
\end{align*}
and hence
\begin{multline}\label{f2}
2^{2(k-l-n)}2^{-k\kappa_{\Omega}}\sum_{m:\,|J_m|< |I_{j(m)}|/100}(2^{-2(k-l)}\frac{|{I}_{j(m)}|^2}{|J_m|^2}+2^{-3(k-l)}\frac{|{I}_{j(m)}|^3}{|J_m|^3})
\\
\lesssim2^{2(k-l-n)}2^{-k\kappa_{\Omega}}\sum_{m:\,|J_m|< |I_{j(m)}|/100}\frac{\Delta_{J_m}(\gamma_k^{\prime})}{\Delta_{{I}_{j(m)}}(\gamma_k^{\prime})}\lesssim 2^{(l-k)\kappa_{\Omega}}2^{2(k-l-n)}.
\end{multline}
Together (\ref{f1}) and (\ref{f2}) imply that
\begin{align}\label{f3}
\sum_j\int|B_{l, k, j, n}(x)|\,dx\lesssim 2^{-n}+2^{(l-k)\kappa_{\Omega}}2^{(k-l-n)}.
\end{align}
Summing over $n>k-l$ and $k\ge l$ yields (\ref{notildaest}).

\end{proof}

\subsection*{The case $k<l$}
To prove Proposition \ref{offprop}, it remains to prove the following lemma.
\begin{lemma}\label{klemma}
Let $S_k$ be defined as previously. Then
\begin{align*}
\Norm{\sum_{k:\,k<l}S_k(a_Q)}_{L^1(\mathbb{R}^2)}\lesssim 1.
\end{align*}
\end{lemma}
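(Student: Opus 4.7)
The strategy is to exploit the mean-zero property of $a_Q$: since $k<l$, the kernel $\tilde K_k(x):=K_k(x)\phi_0(2^{-6M}|x|)$ of $S_k$ oscillates on the scale $2^{-k}$, which exceeds the diameter $2^{-l}$ of $Q$. Writing $y_Q$ for the center of $Q$ and using $\int a_Q=0$,
\[
S_k a_Q(x)=\int_Q\bigl[\tilde K_k(x-y)-\tilde K_k(x-y_Q)\bigr]a_Q(y)\,dy,
\]
and an application of the fundamental theorem of calculus combined with Fubini yields
\[
\|S_k a_Q\|_{L^1(\mathbb{R}^2)}\lesssim \|a_Q\|_{L^1}\cdot 2^{-l}\cdot\|\nabla \tilde K_k\|_{L^1(\mathbb{R}^2)}\lesssim 2^{-l}\|\nabla \tilde K_k\|_{L^1(\mathbb{R}^2)}.
\]

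The key remaining estimate is the uniform bound $\|\nabla \tilde K_k\|_{L^1(\mathbb{R}^2)}\lesssim 2^k$, since then the geometric sum $\sum_{k<l}2^{k-l}\leq 2$ completes the proof. Splitting $\nabla\tilde K_k=(\nabla K_k)\phi_0(2^{-6M}|\cdot|)+K_k\,\nabla\phi_0(2^{-6M}|\cdot|)$, the second summand is supported where $|x|\approx 2^{6M}$, and its $L^1$ norm is controlled (uniformly in $k$) by Remark \ref{rem} applied with $\epsilon=0$, since $|\nabla\phi_0(2^{-6M}|\cdot|)|\lesssim 1$. For the first summand, observe that on the support of $\theta_k(\rho(\cdot))$ we have $|\xi|\approx 2^k$, so we may write $i\xi\,a(\rho(\xi))=2^k\,b(\xi)\,a(\rho(\xi))$ for some smooth, bounded vector-valued function $b$ supported where $|\xi|\approx 2^k$; the symbol $b(\xi)a(\rho(\xi))$ is still of order $-1/2$. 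Consequently $(\nabla K_k)\phi_0$ equals $2^k$ times a kernel of exactly the form considered in Proposition \ref{mainprop2}, so that $\|(\nabla K_k)\phi_0\|_{L^1}\lesssim 2^k$.

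The principal obstacle is obtaining the last bound without logarithmic loss in $k$: the direct adaptation of the decomposition in Section \ref{l1kerest} produces a spurious factor of $k$ from summing the kernel pieces $K_{k,j,n}$ over the scale parameter $n$, which would destroy the geometric summation $\sum_{k<l}2^{k-l}$. To remove this loss, one replaces $\gamma$ with the smoothed curve $\gamma_k$ from Lemma \ref{approxlemma2} (taken with $l$ replaced by $k$), performs an additional integration by parts in the angular variable as in the analysis of $B_{l,k,j,n}$ within the proof of Lemma \ref{maintermlemma}, and exploits the sharper cardinality bound (\ref{newcard}) available precisely because $\kappa_\Omega=1/2$; the associated error term is handled as in Lemma \ref{errorlemma}. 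These are the same ingredients already developed in Section 5, now applied to the derivative kernel instead of $B_{l,k,j,n}$ itself.
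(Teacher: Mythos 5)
Your proposal is correct and follows essentially the same route as the paper: exploit the vanishing mean of $a_Q$ to reduce matters to the gradient bound $\lesssim 2^k$ for the kernel, and then obtain the uniform bound $\int|K_k|\,dx\lesssim 1$ (no $\epsilon$- or $\log$-loss) by rerunning the $\kappa_\Omega=1/2$ endpoint machinery at scale $2^{-k}$, i.e.\ the case $l=k$ of (\ref{kglest}) together with the $K_{k,j,0}$ estimate and (\ref{newcard}), exactly as you describe. The only cosmetic difference is that the paper relates $\nabla K_k$ to $K_k$ via the convolution identity $\nabla K_k=K_k\ast 2^{3k}\phi(2^k\cdot)$ coming from the Fourier support $|\xi|\approx 2^k$, rather than by factoring $i\xi=2^k b(\xi)$ into the multiplier; this sidesteps your slightly imprecise claim that the resulting kernel is \emph{exactly} of the form treated in Proposition \ref{mainprop2} (the factor $b(\xi)$ is not quasiradial, though it is harmless).
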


\begin{proof}[Proof of Lemma \ref{klemma}]
We will need to exploit the cancellation of the atom. Since $\int a_Q=0$, we only need prove that for $k<l$,
\begin{align}\label{kll1}
\sup_{y, y^{\prime}\in Q}\int_{\mathbb{R}^2}|K_k(x-y)-K_k(x-y^{\prime})|\,dx\lesssim 2^{k-l}.
\end{align}
Now,
\begin{multline*}
\sup_{y, y^{\prime}\in Q}\int_{\mathbb{R}^2}|K_k(x-y)-K_k(x-y^{\prime})|\,dx\lesssim\int\sup_{y, y^{\prime}\in Q}|K_k(x-y)-K_k(x-y^{\prime})|\,dx
\\
\lesssim2^{-l}\int\sup_{y\in Q}|\nabla K_k(x-y)|\,dx,
\end{multline*}
so to prove (\ref{kll1}) it suffices to show that
\begin{align}\label{kll2}
\int\sup_{y\in Q}|\nabla K_k(x-y)|\,dx\lesssim 2^k.
\end{align}
Since $k<l$ and since $(\nabla K_k)(x)=(K_k(\cdot)\ast2^{3k}\phi(2^k\cdot))(x)$ for some Schwartz function $\phi$, 
it is easy to see that 
\begin{align*}
\int\sup_{y\in Q}|\nabla K_k(x-y)|\,dx\lesssim 2^k\int|K_k(x)|\,dx.
\end{align*}
But by the proof of (\ref{kglest}) in the case that $k=l$ and the estimation of the term $K_{k, j, 0}$ from Section \ref{l1kerest}, we have
\begin{align*}
\int|K_k(x)|\,dx\lesssim 1,
\end{align*}
which implies (\ref{kll2}) and finishes the proof.
\end{proof}

\section{Estimates for a generalized Bochner-Riesz square function}\label{bochnersec}
In \cite{cgt}, Carbery, Gasper and Trebels showed that one may use the sharp $L^4$ estimates for the two-dimensional Bochner-Riesz square function, first obtained by Carbery in \cite{car}, to prove multiplier theorems for radial Fourier multipliers in $\mathbb{R}^2$. We are thus motivated to consider the generalized Bochner-Riesz square function
\begin{align*}
G^{\alpha}f(x)=\bigg(\int_0^{\infty}\bigg|\frac{\partial}{\partial t}\mathcal{R}_t^{\alpha}f(x)\bigg|^2\,t\,dt\bigg)^{1/2}.
\end{align*}
In the same vein as in \cite{cgt}, $L^4$ estimates for $G^{\alpha}$ yield a multiplier theorem for quasiradial multipliers in the range $4/3\le p\le4$, which we will then interpolate with Theorem \ref{multscalethm}. In \cite{clad}, the following $L^4$ estimate for $G^{\alpha}$ is obtained.
\begin{proposition}\label{sqrprop}
For $\alpha>-1/2$,
\begin{align*}
\Norm{G^{\alpha}f}_4\lesssim_M\Norm{f}_4.
\end{align*}
\end{proposition}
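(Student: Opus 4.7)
The plan is to adapt to the quasiradial setting the argument of Carbery \cite{car} for the classical Bochner--Riesz square function, whose main ingredient is a Kakeya-type $L^2$ estimate in the plane. By duality the $L^4$ bound for $G^\alpha$ is equivalent to the bilinear inequality
\begin{align*}
\bigg|\int_{\mathbb{R}^2}\int_0^\infty\bigg|\frac{\partial}{\partial t}\mathcal{R}_t^\alpha f(x)\bigg|^2 h(x)\,t\,dt\,dx\bigg|\lesssim\Norm{f}_4^2\Norm{h}_2
\end{align*}
for every $h\in L^2(\mathbb{R}^2)$. Expanding the square on the left and applying Plancherel in $x$ turns this into a bilinear form
\begin{align*}
\int\!\!\int K(\xi,\eta)\widehat{h}(\eta-\xi)\widehat{f}(\xi)\overline{\widehat{f}(\eta)}\,d\xi\,d\eta,
\end{align*}
where $K(\xi,\eta)$ is obtained by integrating $\partial_t(1-\rho(\xi)/t)_+^\alpha\,\overline{\partial_t(1-\rho(\eta)/t)_+^\alpha}\,t$ over $t$. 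The hypothesis $\alpha>-1/2$ ensures that $K$ is concentrated near $\{\rho(\xi)=\rho(\eta)\}$ with appropriate decay in $|\rho(\xi)-\rho(\eta)|$, so after dyadic decompositions in $\rho$ and in $|\rho(\xi)-\rho(\eta)|$ one is reduced to a model scale $\rho(\xi)\approx\rho(\eta)\approx 1$ with $|\rho(\xi)-\rho(\eta)|\lesssim\delta$.

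At this scale I would invoke the Minkowski cap decomposition $\tilde{\mathfrak{A}}(\delta)$ of Section \ref{prelim}, writing $f=\sum_j f_j$ with $\widehat{f_j}$ supported in the sector through the cap $I_j$ intersected with a thin $\rho$-annulus. The Fourier supports are almost disjoint, giving an approximate square function identity $\Norm{f}_4\approx\Norm{(\sum_j|f_j|^2)^{1/2}}_4$ up to logarithmic losses. The pairwise products $f_j\overline{f_{j'}}$ have Fourier support in thin rectangles of dimensions $\delta\times|I_j|$, whose spatial-side envelopes are tubes of eccentricity $\sim\delta^{-1}$ aligned with the normals to $\partial\Omega$ at the relevant caps. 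Estimating the bilinear form then reduces to an $L^2$ Kakeya bound for the averaging operator of $h$ over this tube family, combined with the almost-orthogonality of the $f_j$.

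The main obstacle is to establish this Kakeya estimate with uniform control in the geometry of $\partial\Omega$. In $\mathbb{R}^2$ the Kakeya maximal operator over any family of tubes of eccentricity $\delta^{-1}$ is bounded on $L^2$ with norm $\lesssim\log(\delta^{-1})$, uniformly in the orientations of the tubes; combined with the counting bound $N(\Omega,\delta)\lesssim\delta^{-\kappa_\Omega}$ from Lemma \ref{covlemma} and the $\delta^{1/2+\alpha}$ gain in $K$ away from the diagonal coming from $\alpha>-1/2$, the resulting per-scale estimate can be summed over dyadic $\delta$ to yield the claimed $L^4$ bound. The subtle point is that the family $\tilde{\mathfrak{A}}(\delta)$ can be highly nonuniform for irregular $\Omega$, so verifying the required almost-orthogonality and Kakeya-type bound against this specific tube family is where the bulk of the analysis lies.
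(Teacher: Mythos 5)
The first thing to note is that the paper contains no proof of Proposition \ref{sqrprop}: it is imported from \cite{clad}, listed as ``in preparation,'' so there is no in-paper argument to compare yours with, and your proposal has to stand entirely on its own.

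Judged on its own, it is an outline with the decisive steps missing. The two ingredients that constitute the actual content --- (i) the reverse square-function (biorthogonality) estimate $\Norm{f}_4\lesssim\Norm{(\sum_j|f_j|^2)^{1/2}}_4$ for the nonuniform cap decomposition of an arbitrary convex curve, and (ii) the weighted $L^2$ (Kakeya-type) estimate against the associated tube family, whose boxes have varying eccentricity $\sim|I_j|/\delta$ rather than a single eccentricity $\delta^{-1}$, together with a power gain in $\delta$ that survives summation over dyadic scales --- are precisely the points you defer (``where the bulk of the analysis lies''). For the circle, (i) is C\'ordoba's argument and uses the uniform $\delta^{1/2}$-separation of cap directions in an essential way; for general convex $\partial\Omega$ the caps produced by $\tilde{\mathfrak{A}}(\delta)$ can be nearly parallel and of wildly different lengths, and neither the finite overlap of the Fourier supports of the products $f_j\overline{f_{j'}}$ nor the tube estimate is supplied with constants depending only on $M$. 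There is also a concrete inconsistency in your bookkeeping: reading $\mathcal{R}_t^\alpha$ as the multiplier $(1-\rho(\xi)/t)_+^\alpha$, as you do when you expand the square, the $t$-integral defining $K(\xi,\eta)$ diverges already on the diagonal for $\alpha\le 1/2$, since near $t=\rho(\xi)$ one has $|\partial_t(1-\rho(\xi)/t)_+^\alpha|^2\,t\approx c\,(t-\rho(\xi))^{2\alpha-2}$; so the claimed range $\alpha>-1/2$ cannot be right under that normalization, and consistency with the $L^2$ theory and with Corollary \ref{rieszcor} (which requires $\alpha>1/2$) indicates that $\partial_t\mathcal{R}_t^\alpha$ should be of Bochner--Riesz order $\alpha$, one order lower than in your expansion; the asserted ``$\delta^{1/2+\alpha}$ gain'' is never derived. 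The Carbery-style strategy is a sensible plan, and may well resemble what \cite{clad} does, but as written it is not a proof.
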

Following \cite{cgt}, one may then obtain the following corollary. 
\begin{corollary}\label{rieszcor}
If $\alpha>1/2$, then for $4/3\le p\le 4$, 
\begin{align*}
\Norm{m\circ\rho}_{M^p(\mathbb{R}^2)}\lesssim\sup_{t>0}\bigg(\int
|\mathcal{F}_{\mathbb{R}}[\phi(\cdot)m(t\cdot)](\tau)|^2|\tau|^{2\alpha}\,d\tau\bigg)^{1/2}.
\end{align*}
\end{corollary}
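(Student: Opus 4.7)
The proof follows the strategy of Carbery–Gasper–Trebels, converting the $L^4$ bound for $G^\alpha$ in Proposition \ref{sqrprop} into a multiplier theorem via a subordination formula. The starting observation is that $\|m\circ\rho\|_{M^p}$ is invariant under the dilation $m\mapsto m(t\cdot)$ (corresponding to isotropic dilation on the $\mathbb{R}^2$ side), so a Littlewood--Paley decomposition $m=\sum_k m_k$ with $m_k$ supported in $[2^{k-1},2^{k+1}]$ reduces matters to proving, uniformly in $k$, the local estimate
\begin{align*}
\|T_{m_k\circ\rho}\|_{L^p\to L^p}\lesssim\left(\int|\mathcal{F}_{\mathbb{R}}[\phi(\cdot)m_k(2^k\cdot)](\tau)|^2(1+|\tau|)^{2\alpha}\,d\tau\right)^{1/2},
\end{align*}
and then reassembling via a vector-valued Littlewood--Paley inequality for the quasi-radial projections $P_k$ with symbols $\tilde\psi(2^{-k}\rho(\xi))$ that yields the corollary in the stated range $4/3\le p\le 4$.

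For $m$ supported in $[1/2,2]$ the key ingredient is the representation
\begin{align*}
m(r)=\int_0^\infty h(t)\,\partial_t\bigl[(1-r/t)_+^\alpha\bigr]\,dt,
\end{align*}
which, on interchanging the integral with the multiplier action, gives $T_mf(x)=\int_0^\infty h(t)\,\frac{\partial}{\partial t}\mathcal{R}_t^\alpha f(x)\,dt$. Substituting $u=r/t$ turns the defining identity into the Mellin convolution $m(r)=\alpha\int_0^1 h(r/u)(1-u)^{\alpha-1}\,du$, whose Mellin transform reads $\tilde m(\sigma)=\alpha\,\Gamma(\alpha)\,\tilde h(\sigma)\,\Gamma(\sigma+1)/\Gamma(\sigma+\alpha+1)$. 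By Stirling's asymptotic $|\Gamma(i\tau+\alpha+1)/\Gamma(i\tau+1)|\approx(1+|\tau|)^\alpha$ and Mellin--Plancherel,
\begin{align*}
\int_0^\infty |h(t)|^2\,\frac{dt}{t}\lesssim\int|\widehat m(\tau)|^2(1+|\tau|)^{2\alpha}\,d\tau,
\end{align*}
after identifying the Mellin and Fourier norms of $m$ via the smooth change of variables $r=e^v$ on $[1/2,2]$. Applying pointwise Cauchy--Schwarz with weights $t\,dt$ and $dt/t$ yields $|T_mf(x)|^2\le G^\alpha f(x)^2\,\|h\|_{L^2(dt/t)}^2$; taking the $L^4$ norm and invoking Proposition \ref{sqrprop} produces the $L^4\to L^4$ bound. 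Duality gives the $L^{4/3}\to L^{4/3}$ bound, and interpolation with the trivial $L^2$ estimate extends the conclusion to $4/3\le p\le 4$.

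The main obstacles are twofold. The first is the Mellin-side identification of $h$ in terms of the Sobolev-type norm on $m$: Stirling's asymptotic must be exploited carefully to produce the exponent $(1+|\tau|)^{2\alpha}$, and the threshold $\alpha>1/2$ is what ensures $H^\alpha(\mathbb{R})\hookrightarrow L^\infty$, so that $m$ is bounded and the subordination integral converges pointwise. The second is the reassembly of the dyadic pieces across $k$, which requires that the quasi-radial Littlewood--Paley projections $P_k$ form a Littlewood--Paley family on $L^p$ for $1<p<\infty$; this is established by noting that each $P_k$ is a smooth compactly supported quasi-radial multiplier (covered, for instance, by Corollary \ref{symbolcor1} applied to a fixed smooth bump), and invoking Khintchine's inequality together with vector-valued singular integral theory.
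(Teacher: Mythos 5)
You are following exactly the route the paper intends: the paper offers no proof of Corollary \ref{rieszcor} beyond the pointer to Carbery--Gasper--Trebels, and your core steps reconstruct that argument correctly. The subordination formula $m(r)=\int_0^\infty h(t)\,\partial_t[(1-r/t)_+^\alpha]\,dt$, the Mellin-convolution identity $\tilde m(\sigma)=\Gamma(\alpha+1)\tilde h(\sigma)\Gamma(\sigma+1)/\Gamma(\sigma+\alpha+1)$ with Stirling giving $\|h\|_{L^2(dt/t)}\lesssim\big(\int|\widehat m(\tau)|^2(1+|\tau|)^{2\alpha}d\tau\big)^{1/2}$ for $m$ supported in $(1/2,2)$, the pointwise Cauchy--Schwarz bound $|T_mf(x)|\le G^\alpha f(x)\,\|h\|_{L^2(dt/t)}$, Proposition \ref{sqrprop} at $p=4$, and duality plus interpolation are all right, and your identification of where $\alpha>1/2$ enters is reasonable.

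The step that does not close as written is the reassembly across dyadic scales. Uniform \emph{scalar} single-band bounds $\|T_{m_k\circ\rho}\|_{L^4\to L^4}\lesssim A$ together with a Littlewood--Paley inequality for the quasi-radial projections $P_k$ do not imply $\|T_{m\circ\rho}\|_{L^4\to L^4}\lesssim A$: after writing $T_mf=\sum_k T_{m_k}\tilde P_kf$ and using the dual square-function estimate, what you need is the vector-valued inequality $\big\|\big(\sum_k|T_{m_k}g_k|^2\big)^{1/2}\big\|_4\lesssim A\big\|\big(\sum_k|g_k|^2\big)^{1/2}\big\|_4$, and for $p>2$ this is not a consequence of the scalar bounds (one cannot detour through $\big(\sum_k\|\tilde P_kf\|_4^2\big)^{1/2}$, since $L^4$ does not embed into the corresponding $\ell^2$-Besov space). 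Fortunately your own proof of the single-band estimate supplies the missing ingredient: the pointwise domination $|T_{m_k}g_k(x)|\le\|h_k\|_{L^2(dt/t)}\,G^\alpha g_k(x)$ holds uniformly in $k$ after rescaling, and $G^\alpha$ admits an $\ell^2$-valued extension because it is the $L^2(t\,dt)$-norm of the linear operator $f\mapsto\{\partial_t\mathcal{R}_t^\alpha f\}$ (Khintchine/Marcinkiewicz--Zygmund for Hilbert-space-valued operators); this yields the required vector-valued bound. A second, smaller imprecision: the Littlewood--Paley inequality for the rough quasi-radial $P_k$ does not follow from Corollary \ref{symbolcor1} alone, which gives only single-band kernel bounds, and Mikhlin-type arguments are unavailable since $\rho$ is merely Lipschitz; the clean in-paper justification is to apply Theorem \ref{multscalethm} to the randomized sums $\sum_k\pm_k\tilde\psi(2^{-k}\cdot)$, whose localized $B_{\kappa_\Omega,\epsilon}$ norms are bounded uniformly in the signs, and then invoke Khintchine. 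With these two amendments your outline is a complete proof, essentially identical to the cited Carbery--Gasper--Trebels argument.
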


\section{An interpolation argument}\label{anasec}
We now prove Theorem \ref{mainmultthm} by interpolating Corollary \ref{rieszcor} and Theorem \ref{multscalethm}.

\begin{proof}[Proof of Theorem \ref{mainmultthm}.]
Let $\tilde{\mathcal{S}}(\mathbb{R})$ denote the space of Schwartz functions on $\mathbb{R}$ with support in the annulus $\{x:\,1/2<|x|<2\}$. For $s\ge 0$ and $1\le r\le 2$ define norms $\Norm{\cdot}_r^s$ by
\begin{align*}
\Norm{f}_r^s=\bigg(\int|\widehat f(\tau)|^r(1+|\tau|)^{rs}\,d\tau\bigg)^{1/r},
\end{align*}
and let $L_{r}^s$ denote the space of all measurable functions $f$ with $\Norm{f}_r^s<\infty$. Let $\tilde{L}_r^s(\mathbb{R})$ denote the closure of $\tilde{\mathcal{S}}(\mathbb{R})$ in $L_r^s(\mathbb{R})$. For each integer $N\ge 0$, let  $C_{0, N}$ denote the space of sequences with support in $[-N, N]$, and let $\ell^{\infty}_N$ denote the closure of $C_{0, N}$ in $\ell^{\infty}$. For $N\in\mathbb{N}$, define a bilinear operator $T_{N}$ where $T_{N}: \mathcal{S}(\mathbb{R}^2)\times C_{0, N}(\tilde{S}(\mathbb{R}))\to \mathcal{S}(\mathbb{R}^2)$ by 
\begin{align*}
\mathcal{F}[T_{N}(f, \{m_k\}_{k=-N}^N)(\cdot)](\xi)=\sum_{k=-N}^Nm_k(2^{-k}\rho(\xi))\hat{f}(\xi).
\end{align*}
Then Theorem \ref{multscalethm} implies that for $s>\kappa_{\Omega}$ and for every $N$ and $1<p<\infty$, $T_{N}$ extends to a bounded bilinear operator from $L^p(\mathbb{R}^2)\times \ell^{\infty}_N(\tilde{L}_1^s(\mathbb{R}))$ to $L^p(\mathbb{R}^2)$ with operator norm
\begin{align}\label{bilinear1}
\Norm{T_{N}}_{L^p(\mathbb{R}^2)\times \ell^{\infty}_N(\tilde{L}_1^s(\mathbb{R}))\to L^p(\mathbb{R}^2)}=C_{p, s}
\end{align}
for some constant $C_{p}>0$ depending only on $p$ and $s$ and not on $N$. Corollary \ref{rieszcor} implies that for every $\alpha>1/2$ and for every $N$, $T_{N}$ extends to a bounded bilinear operator from $L^{4/3}(\mathbb{R}^2)\times \ell^{\infty}_N(\tilde{L}_2^{\alpha}(\mathbb{R}))$ to $L^{4/3}(\mathbb{R}^2)$ with operator norm
\begin{align}\label{bilinear2}
\Norm{T_{N}}_{L^{4/3}(\mathbb{R}^2)\times \ell^{\infty}_N(\tilde{L}_2^{\alpha}(\mathbb{R}))\to L^{4/3}(\mathbb{R}^2)}=C^{\prime}_{\alpha}
\end{align}
for some constant $C^{\prime}_{\alpha}>0$ depending only on $\alpha$ and not on $N$. Applying bilinear real interpolation methods (see for example \cite{bl}) to (\ref{bilinear1}) and (\ref{bilinear2}), we obtain for $0\le\theta\le 1$,
\begin{align}\label{bilinear3}
\Norm{T_{N}}_{L^{q_{0}}(\mathbb{R}^2)\times \ell^{\infty}_N(\tilde{L}_{q_1}^{s_0(\epsilon)}(\mathbb{R}))\to L^{q_0}(\mathbb{R}^2)}\lesssim_{\epsilon, p, \theta}1,
\end{align}
where
\begin{align}\label{exp}
\frac{1}{q_0}=\frac{1-\theta}{p}+\frac{\theta}{4/3}, \qquad \frac{1}{q_1}=1-\frac{\theta}{2}, \qquad s_0(\epsilon)=(1-\theta)\kappa_{\Omega}+\frac{\theta}{2}+\epsilon.
\end{align}
Define a bilinear operator $T: \mathcal{S}(\mathbb{R}^2)\times\ell^{\infty}(\tilde{L}_1^{0}(\mathbb{R}))\to L^2(\mathbb{R}^2)$ by
\begin{align*}
\mathcal{F}[T(f, \{m_k\}_{k=-\infty}^{\infty})(\cdot)](\xi)=\sum_{k=-\infty}^{\infty}m_k(2^{-k}\rho(\xi))\widehat f(\xi).
\end{align*}
Using (\ref{bilinear3}) and letting $N\to\infty$, we obtain
\begin{align*}
\Norm{T}_{L^{q_{0}}(\mathbb{R}^2)\times \ell^{\infty}(\tilde{L}_{q_1}^{s_0(\epsilon)}(\mathbb{R}))\to L^{q_0}(\mathbb{R}^2)}\lesssim_{\epsilon, p, \theta}1,
\end{align*}
for $q_0, q_1, s_0(\epsilon)$ as in (\ref{exp}). Set $s(\kappa_{\Omega}, \theta)=(1-\theta)\kappa_{\Omega}+\frac{\theta}{2}$. Since $1<p<\infty$, we have
\begin{align}\label{finalest}
\Norm{T}_{L^{q_{0}}(\mathbb{R}^2)\times \ell^{\infty}(\tilde{L}_{\frac{2}{2-\theta}}^{s(\kappa_{\Omega}, \theta)+\epsilon}(\mathbb{R}))\to L^{q_0}(\mathbb{R}^2)}\lesssim_{\epsilon, q_0, \theta}1,
\end{align}
for any $\frac{4}{4-\theta}<q_0<\frac{4}{\theta}$.
It is straightforward to see that (\ref{finalest}) implies the result. 
\end{proof}

\end{document}